\crefname{enumi}{}{}
\crefname{section}{Section}{Sections}
\crefname{subsection}{\S}{\S\S}
\theoremstyle{plain}
\newtheorem{lemma}{Lemma}[section]
\newtheorem{proposition}[lemma]{Proposition}
\newtheorem{corollary}[lemma]{Corollary}
\newtheorem{theorem}[lemma]{Theorem}
\newtheorem{theoremlet}{Theorem}
\newtheorem{corlet}[theoremlet]{Corollary}
\theoremstyle{nonumberplain}
\theoremstyle{plain}
\newtheorem{definition}[lemma]{Definition}
\newtheorem{remark}[lemma]{Remark}
\newtheorem{convention}[lemma]{Convention}
\newtheorem{notation}[lemma]{Notation}
\crefname{definition}{definition}{definitions}
\crefname{ex}{example}{examples}
\crefname{remark}{remark}{remarks}
\crefname{convention}{convention}{conventions}
\crefname{notation}{notation}{notations}
\crefname{lemma}{lemma}{lemmas}
\crefname{proposition}{proposition}{propositions}
\crefname{corollary}{corollary}{corollaries}
\crefname{theorem}{theorem}{theorems}
\crefname{assumption}{assumption}{Assumptions}
\crefname{equation}{}{}
\numberwithin{equation}{section}
\theoremstyle{nonumberplain}
\newtheorem{proof}{Proof}
\newcommand\bC{{\mathbb C}}
\newcommand\bN{{\mathbb N}}
\newcommand\bP{{\mathbb P}}
\newcommand\bR{{\mathbb R}}
\newcommand\bS{{\mathbb S}}
\newcommand\bZ{{\mathbb Z}}
\newcommand\cO{{\mathcal O}}
\newcommand\cP{{\mathcal P}}
\newcommand\cX{{\mathcal X}}
\newcommand\cY{{\mathcal Y}}
\newcommand{\op}[1]{\operatorname{#1}}
\DeclareMathOperator{\id}{id}
\newcommand{\cat}[1]{\textsc{#1}}
\newcommand{\qedhere}{\mbox{}\hfill\ensuremath{\blacksquare}}
\title{Random quantum graphs}
\author{Alexandru Chirvasitu and Mateusz Wasilewski}
\begin{document}

\date{}

\newcommand{\Addresses}{{
  \bigskip
  \footnotesize

  \textsc{Department of Mathematics, University at Buffalo, Buffalo,
    NY 14260-2900, USA}\par\nopagebreak \textit{E-mail address}:
  \texttt{achirvas@buffalo.edu}

  \medskip

  \textsc{Department of Mathematics - Section of Analysis, KU Leuven, Leuven,
    Belgium}\par\nopagebreak \textit{E-mail address}:
  \texttt{mateusz.wasilewski@kuleuven.be}

}}

\maketitle

\begin{abstract}
  We prove a number of results to the effect that generic quantum graphs (defined via operator systems as in the work of Duan-Severini-Winter / Weaver) have few symmetries: for a Zariski-dense open set of tuples $(X_1,\cdots,X_d)$ of traceless self-adjoint operators in the $n\times n$ matrix algebra the corresponding operator system has trivial automorphism group, in the largest possible range for the parameters: $2\le d\le n^2-3$. Moreover, the automorphism group is generically abelian in the larger parameter range $1\le d\le n^2-2$. This then implies that for those respective parameters the corresponding random-quantum-graph model built on the GUE ensembles of $X_i$'s (mimicking the Erd\H{o}s-R\'{e}nyi $G(n,p)$ model) has trivial/abelian automorphism group almost surely.
\end{abstract}

\noindent {\em Key words: random matrix; random graph; quantum graph; operator system; quantum relation}

\vspace{.5cm}

\noindent{MSC 2020: 60B20; 05C80; 20G20; 20B25; 22E45; 15A30}


\section*{Introduction}
The theory of quantum graphs can be traced back to the work of Duan, Severini and Winter (\cite{MR3015725}) and Weaver (\cite{MR2908249}). In the former the authors define a quantum confusability graph associated to a quantum channel, much like a confusability graph arises from a classical channel. In the latter the author develops the theory of quantum relations, inspired by his previous work with Kuperberg (\cite{MR2908248}) on quantum metric spaces. In both cases quantum graphs turn out to be encoded by operator systems.

The work of Weaver provides a unified framework for classical and quantum relations. In particular, one can identify classical graphs with operator systems of a specific kind. As proved in \cite[Theorem 3.3]{MR3337337}, these operator systems are actually complete invariants of the graphs; two graphs are isomorphic if and only if the associated operator systems are unitally completely order isomorphic. This allowed the authors to associate new -- \emph{quantum} -- invariants to classical graphs.

Another approach to quantum graphs was developed in \cite{MuRuVe18}. The authors, inspired by previous work on quantum isomorphisms of graphs \cite{MR4097284}, developed a categorical framework for quantum functions. What turned out from their investigations is that most of the information about the quantum isomorphisms can be recovered from appropriate categories, apart from commutativity of the algebra of functions on the vertices. This led them to include quantum graphs into their considerations, which they define to be finite dimensional $C^{\ast}$-algebras equipped with a noncommutative analogue of an adjacency matrix. A priori there is no reason for this notion of a quantum graph to be the same as the one mentioned above, but both approaches turn out to be essentially equivalent (cf. \Cref{Subsec:quantgraph}).

In this article we start studying random quantum graphs; random classical graphs have a long history (see \cite{MR120167}). One of the first considered issues was, whether a typical graph admits non-trivial symmetries. Symmetric objects are expected to be special and it was confirmed in \cite{er} that, as the size of the graph increases, the proportion of graphs admitting a non-trivial automorphism tends to $0$. We will address two different quantum versions of this problem.

The first one has to do with \emph{quantum} automorphisms of graphs. In \cite{MR4097284} the authors discovered an intriguing connection between certain non-local games, originating in quantum information theory, and the quantum automorphism groups of graphs introduced by Banica in \cite{MR2146039} (see also \cite{Bi03}). In \Cref{se:qaut} we improve upon known results about quantum automorphism groups of random graphs, in particular we treat the case of the Erd\H{o}s-R\'{e}nyi $G(n,p)$ model.
\begin{theoremlet}[\Cref{pr.grph-rnd}]
Fix $p\in (0,1)$. Then the quantum automorphism group of a random graph in the $G(n,p)$ model is trivial with overwhelming probability.
\end{theoremlet}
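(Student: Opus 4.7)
My plan is to reduce the quantum question to a classical one by identifying a sufficient condition on $G$---depending only on the adjacency matrix $A$---that forces the quantum automorphism group $\operatorname{Aut}^+(G)$ to coincide with the classical $\operatorname{Aut}(G)$, and then verifying that both this condition and classical asymmetry hold with overwhelming probability for $G \sim G(n,p)$.

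The criterion I would invoke is of Banica--Bichon spectral type: if $A$ has simple spectrum with an orthonormal eigenbasis $v_1, \ldots, v_n$ whose entries satisfy no unexpected algebraic relations, then $\operatorname{Aut}^+(G) = \operatorname{Aut}(G)$. The mechanism is that a magic unitary $U = (u_{ij})$ satisfying $UA = AU$ must commute with each rank-one spectral projector $v_i v_i^*$, so $Uv_i = v_i c_i$ for some $c_i \in C(\operatorname{Aut}^+(G))$; the algebraic genericity of the entries $(v_i)_j$ propagates through these relations to force the $u_{ij}$'s to commute with one another, collapsing the quantum automorphism group to its classical counterpart. Combined with the Erd\H{o}s--R\'{e}nyi theorem giving $\operatorname{Aut}(G) = 1$ with overwhelming probability, the assertion is reduced to probabilistically verifying the spectral hypotheses for $G(n,p)$.

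For $p \in (0,1)$ fixed, the centered adjacency matrix of $G(n,p)$ is a Wigner-type random matrix with Bernoulli entries. Simplicity of its spectrum with overwhelming probability follows from known level-repulsion/no-gaps results for Bernoulli random matrices (Tao--Vu, Nguyen--Tao--Vu, and subsequent refinements). Eigenvector delocalization theorems provide quantitative control on the entries of each $v_i$. The main obstacle I anticipate is the last step: translating these analytic/probabilistic estimates into the precise algebraic non-degeneracy hypothesis required by the spectral criterion. I would expect to combine eigenvector delocalization, an anticoncentration (small-ball) argument, and a union bound over the finite list of ``bad'' algebraic relations among eigenvector entries that must be ruled out, in order to push the random matrix input through to the exact rigidity statement needed to collapse $\operatorname{Aut}^+(G)$ onto $\operatorname{Aut}(G) = 1$.
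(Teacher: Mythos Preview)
Your overall strategy coincides with the paper's: reduce the quantum statement to the classical one via a spectral criterion on the adjacency matrix, then invoke Erd\H{o}s--R\'enyi/Kim--Sudakov--Vu for classical asymmetry. The random-matrix inputs you cite (Tao--Vu for simple spectrum, delocalization results for eigenvectors) are exactly the ones the paper uses.

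Where you diverge from the paper is in the precise form of the spectral criterion, and here you are making things harder than necessary. You invoke a vague ``algebraic genericity'' of eigenvector entries and anticipate having to rule out a finite list of bad algebraic relations via anticoncentration. The paper instead proves a clean self-contained lemma (\Cref{pr.trv}): if $A$ has simple spectrum and is \emph{thick} (any two eigenvectors have a common nonzero coordinate), then the quantum automorphism group is already classical (in fact a $2$-group). The mechanism is that simple spectrum forces the coaction to send each eigenvector $v$ to $v\otimes g_v$ for a grouplike $g_v$, and then the \emph{algebra} structure of $\cO(X)$---not any genericity of coordinates---gives $g_vg_w=g_wg_v$ whenever $vw\neq 0$, which thickness guarantees. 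So the only eigenvector hypothesis needed is nonvanishing of entries, and this is exactly the second part of \cite[Theorem 1.4]{ovw}. Your anticipated ``main obstacle'' of converting delocalization into algebraic non-degeneracy simply does not arise: the condition required is already a known overwhelming-probability event, with no further anticoncentration or union-bound work.
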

What is more, we treat the case of random regular graphs (\Cref{pr.reg}) and use it to produce large families of isospectral graphs that are not quantum isomorphic (\Cref{pr.g'g''}).

In \Cref{se:gen} we start exploring a different quantum variant, namely automorphism groups of random quantum graphs. The easiest way to provide a random model for a quantum graph is to view it as an operator subsystem $V \subset M_n$ of a matrix algebra -- then $V$ of a fixed dimension can be chosen according to the Haar measure on the Grassmannian. In this way we arrive at the $QG(n,d)$ model. The automorphism group of $V\subset M_n$ consists of those automorphisms of $M_n$ that preserve $V$. Our first result is that the automorphism group is typically abelian, unless $d=0$ or $d=n^2-1$, which correspond to the operator systems $\mathbb{C} \mathds{1}$ and $M_n$, respectively. An important difference from the classical results about Erd\H{o}s-R\'{e}nyi graphs is that in our case the dimension of the quantum graph is kept fixed, we do not need to let it go to infinity. It is mostly a consequence of the fact that the set of quantum graphs of fixed dimension is not discrete, so the outliers can (and do) form a non-trivial measure zero set.

\begin{theoremlet}[\Cref{cor:genab}]\label{Thm:abelaut}
Let $n\in \bN$ and $1\leqslant d \leqslant n^2-2$. Then the automorphism group of a quantum graph in the $QG(n,d)$ model is almost surely abelian.
\end{theoremlet}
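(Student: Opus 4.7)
The plan is to invoke Theorem A on the bulk of the range $2 \leq d \leq n^2-3$ and to handle the two boundary values $d = 1$ and $d = n^2-2$ by direct analysis of the centralizer of a generic traceless self-adjoint matrix. Identify $\operatorname{Aut}(M_n) \cong PU(n)$ via Skolem--Noether, so that the automorphism group in question is
\[
G(V) \;:=\; \{[U] \in PU(n) : U V U^* = V\},
\]
a closed (hence compact Lie) subgroup of $PU(n)$.

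For $d = 1$, write $V = \bC \mathds{1} + \bC X$ with $X$ traceless self-adjoint. If $U V U^* = V$ then $U X U^*$ lies in $V$; being self-adjoint and traceless, it must lie in $\bR X = V \cap \{\text{traceless self-adjoint}\}$, and comparing Hilbert--Schmidt norms forces $U X U^* = \pm X$. On the Zariski-open, full-measure locus where $X$ has $n$ distinct eigenvalues whose collection is not invariant under negation, the sign $-1$ is excluded, so $U$ lies in the commutant of $X$. That commutant is the maximal abelian subalgebra $\bC[X] \cong \bC^n$, and its unitary group modulo scalars is the maximal torus $T^{n-1} \subset PU(n)$; hence $G(V) \cong T^{n-1}$, which is abelian.

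For $d = n^2-2$, the Hilbert--Schmidt complement $V^\perp \subset M_n$ has complex dimension one, is self-adjoint, and is contained in the traceless matrices (since $\mathds{1}\in V$); write $V^\perp = \bC Y$ for a traceless self-adjoint $Y$. Conjugation preserves the HS inner product, so $G(V) = G(V^\perp)$, and the preceding paragraph applies verbatim to $Y$. For the intermediate range $2 \leq d \leq n^2-3$, Theorem A already provides a Zariski-dense open locus on which $G(V)$ is trivial, in particular abelian.

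In each regime the good locus is Zariski-open and dense in the appropriate parameter space; it has full Haar measure on the Grassmannian underlying the $QG(n,d)$ model, delivering the almost-sure conclusion. The only ingredient peculiar to this corollary is the sign analysis at $d = 1$ (and dually at $d = n^2-2$), which requires the mild spectral-asymmetry condition on $X$ to rule out $U X U^* = -X$; the substantive work is absorbed into Theorem A.
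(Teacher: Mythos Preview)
Your argument is correct and matches the paper's own proof of the corollary almost verbatim: the bulk range $2\le d\le n^2-3$ is handled by the triviality theorem, while for $d=1$ one observes that a generic traceless self-adjoint $X$ has simple spectrum and is not conjugate to $-X$, forcing any $U$ with $UVU^*=V$ to commute with $X$ and hence lie in a maximal torus. Two small remarks: the triviality result you invoke is the paper's Theorem~C (\Cref{th:new5}), not Theorem~A; and your explicit treatment of $d=n^2-2$ via the Hilbert--Schmidt complement is exactly what the paper's \Cref{lem:ortho} provides, though the paper's short proof of \Cref{cor:genab} leaves that reduction implicit.
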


Note that we cannot hope for a better result in this regime, because for $d=1$ the random operator system we get is spanned by the identity and one self-adjoint operator, so it would generate an abelian subalgebra, which clearly has a non-trivial automorphism group, for example all the unitaries lying in the algebra itself. It turns out, however, that if we exclude this case (and the corresponding case $d=n^2-2$ on the other edge) then we do get trivial automorphism groups almost surely.

\begin{theoremlet}[\Cref{th:new5}]\label{Thm:trivaut}
Let $n\geqslant 3$ and $2\leqslant d \leqslant n^2-3$. Then the automorphism group of a quantum graph in the $QG(n,d)$ model is almost surely trivial.
\end{theoremlet}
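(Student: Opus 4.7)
The plan is to exhibit the set
\[
B := \{V \in QG(n,d) : \op{Aut}(V) \neq \{e\}\}
\]
as contained in a proper (semi)algebraic subvariety of the Grassmannian $\mathrm{Gr}(d, n^2-1)$ parameterising the $QG(n,d)$ model. Since the Haar measure on this Grassmannian is absolutely continuous with respect to its smooth manifold measure, this reduces the theorem to the dimension estimate $\dim B < \dim \mathrm{Gr} = d(n^2-1-d)$. To that end I write $B$ as the image under the second projection of the incidence variety
\[
I := \{(u,V) \in (PU(n) \smallsetminus \{e\}) \times \mathrm{Gr} \;:\; uVu^{-1} = V\},
\]
and stratify $I$ by the eigenvalue multiplicity profile $(k_1,\ldots,k_m)$ of $u$, with $k_i \geqslant 1$, $\sum_i k_i = n$ and $m \geqslant 2$ (so that $u \neq e$).

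For each profile the stratum dimension is bounded by
\[
\dim I_{(k_1,\ldots,k_m)} \;\leqslant\; \Bigl(n^2 - \textstyle\sum_i k_i^2 + (m-1)\Bigr) \;+\; \max\dim\op{Fix}(u),
\]
where the first summand is the dimension of the locus of $u \in PU(n)$ with the given profile. For the second, fix such a $u$ and decompose
\[
M_n^{\mathrm{sa}} \;=\; \op{Cent}(u)^{\mathrm{sa}} \;\oplus\; \bigoplus_{i<j} R_{ij}^{\mathrm{sa}},
\]
with $\op{Cent}(u)^{\mathrm{sa}}$ of real dimension $\sum_i k_i^2$ and fixed pointwise by $u$, while each $R_{ij}^{\mathrm{sa}}$ has real dimension $2k_ik_j$ and carries a non-trivial orthogonal $u$-action whose real $u$-invariant subspaces are complex subspaces (when $\lambda_i / \lambda_j \notin \bR$) or arbitrary real subspaces (when $\lambda_i / \lambda_j = -1$). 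A $u$-invariant $V$ containing $\mathds{1}$ then splits compatibly with this decomposition, and $\dim\op{Fix}(u)$ is obtained by maximising the resulting sum of Grassmannian dimensions subject to $\dim V = d+1$.

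The content of the proof is then to verify that this upper bound on $\dim I$ is strictly less than $d(n^2-1-d)$ for every admissible profile and every $d$ with $2 \leqslant d \leqslant n^2-3$. The duality $V \mapsto V^{\perp}$ inside the traceless self-adjoint part of $M_n$ with respect to the Hilbert--Schmidt inner product intertwines the $PU(n)$-actions and hence equates $\op{Aut}(V)$ with $\op{Aut}(V^{\perp})$, so one may restrict to $d \leqslant \tfrac{1}{2}(n^2-1)$. The tightest cases are the boundary $d=2$ and the regular profile $(1,\ldots,1)$, whose $u$-parameter space has maximal dimension $n^2-1$ and whose fibre contains the Cartan Grassmannian of dimension $\lfloor (n-1)^2/4\rfloor$ (subject to a parity matching with $d$). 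The main obstacle is the case analysis for non-regular profiles such as $(k,n-k)$, together with the codimension-one loci of $u$'s having an eigenvalue ratio equal to $-1$: both trade a smaller $u$-parameter space for enlarged fibres, and one must check uniformly in $(n,d)$ that this exchange never violates the inequality.
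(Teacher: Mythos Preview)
Your approach is genuinely different from the paper's. The paper does not attempt a global dimension count of the bad locus at all. Instead it first proves, via Luna's \'etale slice theorem, that for the $PSU_n$-conjugation action on $\Lambda^d M_n$ (and hence on the Grassmannian) isotropy groups are upper-semicontinuous in the Zariski topology: if \emph{one} self-adjoint linearly independent tuple has trivial $K_{\langle\cX\rangle}$, then so does a Zariski-open dense set. This reduces the problem to exhibiting a single good tuple for each $(n,d)$, which the paper then does by explicit construction (off-diagonal blocks in an $M_{n-1}$-corner plus a generic diagonal matrix), combined with the duality $d\leftrightarrow n^2-1-d$ and an induction on $n$; small cases $n=3,4$ are handled by hand. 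Your incidence-variety approach would, if carried through, give a more self-contained argument avoiding the slice theorem, at the cost of a uniform combinatorial estimate.

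However, as written the proposal has a genuine gap: you have set up the stratification and correctly identified the two contributions to $\dim I_{(k_1,\dots,k_m)}$, but the actual inequality
\[
\Bigl(n^2-\textstyle\sum_i k_i^2+(m-1)\Bigr)+\max\dim\mathrm{Fix}(u)\ <\ d(n^2-1-d)
\]
is never verified for any profile. You say yourself that ``one must check uniformly in $(n,d)$ that this exchange never violates the inequality'' --- that check \emph{is} the proof, and it is absent. The fibre dimension $\max\dim\mathrm{Fix}(u)$ is a constrained maximum of a sum of Grassmannian dimensions over all splittings of $d$ compatible with the block decomposition, and this maximum depends delicately on the profile, on which eigenvalue ratios equal $-1$, and on $d$; the trade-off you describe between shrinking the $u$-locus and enlarging the fibre is exactly where the work lies. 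Without carrying out at least the two-block and regular strata explicitly (and arguing that finer strata only help), there is no proof here, only a plan. There is also a small slip: you write $\dim V=d+1$ when describing the splitting, but you are working in $\mathrm{Gr}(d,n^2-1)$, so $V$ is the $d$-dimensional traceless part, not the $(d+1)$-dimensional operator system.
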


In graph theory there are two main models of random graphs: $G(n,M)$ and $G(n,p)$. Our $QG(n,d)$ is an analogue of $G(n,M)$ and we can also consider the counterpart of $G(n,p)$ denoted by $QG(n,p)$. However, we cannot define this model as is done classically, because quantum graphs notoriously lack an analogue of vertices. But we can easily derive the $QG(n,p)$ as a weighted sum of $QG(n,d)$ models and an elementary computation yields the following corollary. Note that in this case we do come closer to the original statements about Erd\H{o}s-R\'{e}nyi graphs, where the size goes to infinity.

\begin{corlet}[\Cref{Prop:qgnp}]\label{Cor:trivaut}
If $p(n) \in (0,1)$ satisfies $\lim_{n\to \infty} n^2 p(n) = \infty$ and $\lim_{n\to\infty} (1-p(n))n^2 = \infty$ then as $n$ goes to infinity the automorphism group of a quantum graph in the $QG(n,p(n))$ model is trivial asymptotically almost surely.
\end{corlet}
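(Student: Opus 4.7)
The plan is to reduce the corollary to \Cref{Thm:trivaut} by showing that, under the hypotheses on $p(n)$, the dimension of a random $QG(n,p)$ operator system lies in the good range $[2,n^2-3]$ with probability tending to $1$.

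First, I would record the precise meaning of the $QG(n,p)$ model alluded to in the paragraph preceding the corollary. Since the traceless self-adjoint part of $M_n$ has real dimension $n^2-1$, declaring each of these ``directions'' to be independently present with probability $p$ is the same as drawing the dimension $d$ from a Binomial$(n^2-1,p)$ distribution and then, conditional on $d$, drawing the operator system according to $QG(n,d)$. Symbolically, for any event $E$,
\begin{equation*}
\Pr_{QG(n,p)}(E)\;=\;\sum_{d=0}^{n^2-1}\binom{n^2-1}{d}p^{d}(1-p)^{n^2-1-d}\,\Pr_{QG(n,d)}(E).
\end{equation*}

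Next, invoke \Cref{Thm:trivaut}: for $n\geqslant 3$ and every $d$ with $2\leqslant d\leqslant n^2-3$, the automorphism group of a $QG(n,d)$ quantum graph is trivial almost surely. Consequently, writing $A_n$ for the event ``the automorphism group is trivial'' and $D_n$ for the random dimension, we have
\begin{equation*}
\Pr_{QG(n,p)}(A_n^c)\;\leqslant\;\Pr(D_n\in\{0,1,n^2-2,n^2-1\}).
\end{equation*}
So the task reduces to showing that this last probability tends to $0$.

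Set $N=n^2-1$ and $\lambda=Np$. Directly,
\begin{align*}
\Pr(D_n=0)&=(1-p)^{N}\leqslant e^{-Np}, & \Pr(D_n=1)&=Np(1-p)^{N-1}\leqslant \lambda\,e^{-Np+p},
\end{align*}
and the hypothesis $n^{2}p(n)\to\infty$ forces $\lambda\to\infty$, hence $\lambda e^{-\lambda}\to 0$, hence both of these quantities vanish. The symmetric estimates
\begin{equation*}
\Pr(D_n=N)=p^{N}\leqslant e^{-N(1-p)},\qquad \Pr(D_n=N-1)=Np^{N-1}(1-p)\leqslant N(1-p)\,e^{-(N-1)(1-p)+(1-p)}
\end{equation*}
similarly vanish using $n^{2}(1-p(n))\to\infty$. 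Summing the four bounds gives $\Pr(D_n\notin[2,n^2-3])\to 0$, which combined with the previous display proves the corollary.

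The potential obstacle is purely bookkeeping: one needs the definition of $QG(n,p)$ to genuinely decompose over the $QG(n,d)$ in the manner above, so that the conditional triviality statement from \Cref{Thm:trivaut} can be integrated against the Binomial weights. Once that reduction is in place, the tail estimates are a standard elementary calculation of the form $\lambda e^{-\lambda}\to 0$ on each side, and there is no further subtlety.
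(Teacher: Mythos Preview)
Your proof is correct and follows essentially the same route as the paper: decompose $QG(n,p)$ as a Binomial$(n^2-1,p)$-weighted mixture of the $QG(n,d)$ models, invoke \Cref{Thm:trivaut} on the range $2\le d\le n^2-3$, and then bound the four bad binomial probabilities via $1-x\le e^{-x}$ (and its mirror $x\le e^{-(1-x)}$) to get expressions of the form $\lambda e^{-\lambda}\to 0$. The only cosmetic slip is the extra $+(1-p)$ in the exponent of your bound for $\Pr(D_n=N-1)$, which only weakens the inequality and does not affect the conclusion.
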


There is another corollary that we would like to mention here. We view quantum graphs as operator subsystems $V\subset M_n$ of matrix algebras and the matrix algebra itself is part of the structure, being a quantum counterpart of the coordinate algebra of the vertex set. But operator systems are interesting in their own right and it is important to see what kind of information about them we can recover. It turns out that in a suitable regime one can obtain $M_n$ from an operator system $V$ chosen according to $QG(n,d)$ as its $C^{\ast}$-envelope, thus we can conclude that these operator systems do not have nontrivial automorphisms.

\begin{corlet}[\Cref{Prop:opsysaut}]\label{Cor:trivautopsys} 
Let $n\geqslant 3$ and $2\leqslant d \leqslant n^2-3$. For almost every operator system $V$ in the $QG(n,d)$ model the only unital complete order self-isomorphism of $V$ is the identity. 
\end{corlet}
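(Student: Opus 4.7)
The plan is to deduce \Cref{Cor:trivautopsys} from \Cref{Thm:trivaut} by passing through the Arveson--Hamana $C^*$-envelope. Recall that for any operator system $V$, a unital complete order automorphism $\varphi\colon V\to V$ extends uniquely to a $*$-automorphism of the $C^*$-envelope $C^*_e(V)$. If we can show that $C^*_e(V)=M_n$ almost surely in the $QG(n,d)$ model for the range in question, then every such $\varphi$ lifts to a $*$-automorphism of $M_n$ that preserves $V$ -- precisely a quantum-graph automorphism in the sense of \Cref{Thm:trivaut}. Triviality of the quantum-graph automorphism group then forces $\varphi=\id_V$.

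The crucial step is therefore to establish that for almost every $V\in QG(n,d)$ (with $n\ge 3$ and $d\ge 2$) the $C^*$-algebra generated by $V$ is all of $M_n$. The ``bad'' locus of tuples $(X_1,\ldots,X_d)$ of traceless self-adjoints spanning $V$ for which $C^*(V)\subsetneq M_n$ is cut out by polynomial equations -- it is where the span of words in $I,X_1,\ldots,X_d$ drops in dimension -- so it is a Zariski-closed subvariety of the parameter space. Its complement is nonempty because already two generic self-adjoint matrices generate $M_n$ for $n\ge 2$ (a classical fact; one may also exhibit an explicit such pair of traceless self-adjoints). Hence the good locus is Zariski-dense open, and in particular of full Haar measure on the Grassmannian.

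Given such $V$ with $C^*(V)=M_n$, simplicity of $M_n$ forces the Shilov boundary ideal $J\subset C^*(V)$ of the embedding $V\hookrightarrow M_n$ to be either $0$ or all of $M_n$; the latter is impossible since the quotient map must remain completely isometric on $V$ and $V\neq 0$. Therefore $C^*_e(V)=M_n$. Intersecting the resulting full-measure event with the full-measure event provided by \Cref{Thm:trivaut}, every unital complete order self-isomorphism of $V$ extends to a $*$-automorphism of $M_n$ preserving $V$, which by \Cref{Thm:trivaut} must be the identity; restricting back to $V$ yields $\varphi=\id_V$.

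The main technical point is really the first step, ensuring that a generic $V$ generates $M_n$. The rest is a formal application of the universal property of the $C^*$-envelope combined with the simplicity of $M_n$. An alternative, slightly heavier-handed route would be to verify directly (along the lines of the arguments leading to \Cref{Thm:trivaut}) that a Zariski-dense open set of tuples satisfies both the generation condition and the no-symmetry condition simultaneously; either way, the only substantive input beyond \Cref{Thm:trivaut} is the genericity of generation.
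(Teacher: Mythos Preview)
Your proposal is correct and follows essentially the same approach as the paper: show that almost surely $V$ generates $M_n$ (since $d\ge 2$ and two generic self-adjoints already do), use simplicity of $M_n$ to conclude $C^*_e(V)=M_n$, then invoke the universal property of the $C^*$-envelope to lift any unital complete order automorphism of $V$ to a $*$-automorphism of $M_n$ preserving $V$, and finish with \Cref{Thm:trivaut}. Your argument is somewhat more explicit about the Zariski-closedness of the bad locus and the Shilov boundary ideal, but the strategy is identical to the paper's.
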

\subsection*{Acknowledgements}

We thank Mike Brannan for numerous conversations on the topics covered here, and the anonymous referee for the very careful reading and engaging exchange.

AC is grateful for funding through NSF grant DMS-2001128.

MW was supported by the Research Foundation – Flanders (FWO) through a Postdoctoral Fellowship and by long term structural funding - Methusalem grant of the Flemish Government.

\section{Preliminaries}\label{se.prel}

\subsection{Random structures}

As is customary in the literature, for $p\in [0,1]$ and a positive integer $n$ we denote by $G(n,p)$ the simple random graph built on the Erd\"os-R\'enyi model with probability $p$: $n$ vertices and each of the possible $n\choose{2}$ edges has probability $p$ of being present, independently of each other.

On the other hand, given positive integers $r<n$, we write $G(n,r)$ for the {\it random $r$-regular graph}: an $r$-regular graph on vertex set $[n]=\{1,\cdots,n\}$ with all such graphs being assigned equal probability (as in \cite{bol-dist}, for instance).

We use the following terminology, following, e.g. \cite[Definition 3]{tv-univ}.

\begin{definition}\label{def.hp}
  We say that an $n$-dependent event $E$ holds {\it with overwhelming probability} (or {\it overwhelmingly}) if $P(E)=1-O(n^{-c})$ for every positive constant $c$ (where the scaling factor implicit in the $O$ notation is $c$-dependent).
\end{definition}

\subsection{Quantum graphs - the quantum adjacency matrix and the quantum relations perspective}\label{Subsec:quantgraph}

The aim of this section is to roughly outline how quantum graphs can be seen in two distinct but ultimately equivalent lights.  The following results can essentially be deduced from Section 7 of \cite{MuRuVe18}.

\begin{definition}[\cite{MR2908249}, Definition 2.1]
Let $H$ be a finite dimensional Hilbert space and $M \subseteq B(H)$ be a finite dimensional C$^\ast$-algebra. A {
\it quantum relation on $M$} is an $M'-M'$-bimodule $V \subseteq B(H)$, i.e. a linear subspace $V$ such that $M'VM' \subseteq V$.  We call such a quantum relation $V$ on $M$ {\it symmetric} if $V$ is self-adjoint:  $V = V^*$, and {\it reflexive} if $M' \subset V$.

A symmetric and reflexive quantum relation on $M$ is called a quantum graph.

In this article we will be mostly interested in the case $M=M_n \subset B(\mathbb{C}^n)\simeq M_n$. Then $M' = \mathbb{C} \mathds{1}$, so a quantum graph on $M_n$ will be an operator subsystem  $V\subset M_n$.
\end{definition}

In particular, a reflexive and symmetric quantum relation over any $M$ is an operator system in $B(H)$, and every operator system is a reflexive, symmetric quantum relation on $B(H)$.  The classical motivation for this terminology comes from the following fact:

\begin{proposition}[\cite{MR2908249}, Proposition 2.2]
  Let $X$ be a finite set.  There is a bijective correspondence between  relations $R \subseteq X \times X$ and quantum relations on $\ell^\infty(X) \subseteq B(\ell^2(X))$.  The bijection is given by
\begin{equation*}
  R \mapsto V_R = \text{span}\{e_{ij}: (i,j) \in R\} \qquad V \mapsto R_V = \{(i,j) \in X \times X : \exists T \in V \text{ s.t. } T_{ij} \ne 0\}.
\end{equation*}  
\end{proposition}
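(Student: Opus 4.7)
The plan is to exploit the fact that, since $\ell^\infty(X)$ is maximal abelian in $B(\ell^2(X))$, the commutant $M'$ agrees with $M = \ell^\infty(X)$ itself, so a quantum relation on $M$ is simply an $\ell^\infty(X)$--$\ell^\infty(X)$ bimodule inside $B(\ell^2(X))$. Once this is noted, everything rests on the single identity
\[
  e_{ii}\, T\, e_{jj} \;=\; T_{ij}\, e_{ij}, \qquad T \in B(\ell^2(X)),\ i,j \in X,
\]
where the $e_{ii}$ are the minimal projections of $\ell^\infty(X)$ and the $e_{ij}$ are the matrix units of $B(\ell^2(X))$.

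First I would verify that the two maps in the statement are well defined. For $R \subseteq X\times X$, the subspace $V_R = \mathrm{span}\{e_{ij}:(i,j)\in R\}$ is stable under left and right multiplication by any $e_{kk}$, hence under multiplication by arbitrary elements of $\ell^\infty(X) = M'$; so it is a quantum relation. For a quantum relation $V$, the set $R_V$ is just a subset of $X\times X$, so no verification is needed.

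Next I would show that the two maps are mutual inverses. Given a quantum relation $V$ and $(i,j) \in R_V$, pick a witness $T \in V$ with $T_{ij}\ne 0$; the displayed identity shows $T_{ij}e_{ij} = e_{ii}Te_{jj} \in V$, so $e_{ij}\in V$. Conversely any $T \in V$ expands as $T = \sum_{i,j} T_{ij}e_{ij}$ with nonzero terms precisely indexed by elements of $R_V$, and each such $e_{ij}$ already lies in $V$ by the previous sentence; hence $V = V_{R_V}$. The other composition $R \mapsto V_R \mapsto R_{V_R}$ is even easier: if $(i,j)\in R$ then $e_{ij} \in V_R$ exhibits $(i,j)\in R_{V_R}$, and any element of $V_R$ has matrix coefficients supported in $R$, which gives the reverse inclusion.

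Finally, I would note that the bijection intertwines the two extra structures: $V_R^* = V_{R^{\mathrm{op}}}$ where $R^{\mathrm{op}} = \{(j,i):(i,j)\in R\}$, because $e_{ij}^* = e_{ji}$, so symmetry of $V$ corresponds to symmetry of $R$; and $M' = \ell^\infty(X) = \mathrm{span}\{e_{ii}:i\in X\} \subseteq V_R$ holds iff $(i,i)\in R$ for every $i$, i.e.\ iff $R$ is reflexive. There is no real obstacle here; the only substantive point is recognising that the maximal-abelian hypothesis forces the bimodule to be spanned by the matrix units it contains, which is what makes a completely general quantum relation on $\ell^\infty(X)$ reduce to a classical relation.
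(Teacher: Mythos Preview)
Your argument is correct and complete: the maximal-abelian observation $M'=\ell^\infty(X)$ reduces the bimodule condition to invariance under the projections $e_{ii}$, and the identity $e_{ii}Te_{jj}=T_{ij}e_{ij}$ then shows every quantum relation is spanned by the matrix units it contains; the rest is bookkeeping, and your treatment of reflexivity and symmetry is likewise fine.

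Note, however, that the paper does not actually supply its own proof of this proposition --- it is stated with attribution to Weaver and used as background without argument. So there is no in-paper proof to compare against; your write-up simply fills in what the authors left as a citation.
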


Since any graph on a vertex set $X$ with self-loops and no multiple edges corresponds to a reflexive symmetric relation $R \subseteq X \times X$ ($R$ is the set of edges), Weaver's notion of a quantum graph generalizes both the classical finite graphs and matrix quantum graphs coming from Duan-Severini-Winter (quantum confusability graphs, see \cite{MR3015725}).

In the classical setting we have therefore three ways to view a graph: an adjacency matrix, a relation on the set of vertices and an operator system of a special kind. We would like to explain how a similar correspondence works for quantum graphs, where we replace the algebra of functions on the vertex set by a matrix algebra. To make the approach cleaner, we will first discuss the case of general quantum relations and then we will discuss what the symmetry and reflexivity mean in all three pictures.

In order to work with adjacency matrices in the sense of \cite{MuRuVe18}, we have to pick a specific functional on the matrix algebra, and the choice of normalization is not consistent in the literature. We will be working with the pair $(M_n, \tau)$, where $\tau(x):= n \op{Tr}(x)$ so that $\tau(\mathds{1}) = n^2 = \dim M_n$.
\begin{lemma}
Let $m: M_n \otimes M_n \to M_n$ be the multiplication map. Let $m^{\ast}: M_n \to M_n \otimes M_n$ be the adjoint with respect to the inner product induced by $\tau$. Then $m^{\ast}(e_{ij}) = \frac{1}{n} \sum_{k} e_{ik} \otimes e_{kj}$. It follows that $m m^{\ast} = Id$.
\end{lemma}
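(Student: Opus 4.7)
The statement has two parts, both of which I expect to follow by direct computation against matrix units; the normalization is the only thing to keep track of carefully.

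First I would set up the inner product explicitly. The inner product $\langle x,y\rangle_\tau = \tau(y^*x) = n\op{Tr}(y^*x)$ makes $\{e_{ij}\}$ an orthogonal (not orthonormal) basis of $M_n$ with $\langle e_{ij},e_{kl}\rangle_\tau = n\,\delta_{ik}\delta_{jl}$, and the tensor product inner product on $M_n\otimes M_n$ then satisfies
\[
\langle e_{ij}\otimes e_{kl},\, e_{pq}\otimes e_{rs}\rangle = n^2\,\delta_{ip}\delta_{jq}\delta_{kr}\delta_{ls}.
\]

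Next, to verify the formula for $m^*$, I would pair both candidates against the bilinear form directly on matrix units. On the one hand,
\[
\langle m(e_{ij}\otimes e_{kl}),\, e_{pq}\rangle = \langle \delta_{jk}e_{il},\, e_{pq}\rangle = n\,\delta_{jk}\delta_{ip}\delta_{lq}.
\]
On the other hand, with the proposed formula $m^*(e_{pq}) = \frac{1}{n}\sum_r e_{pr}\otimes e_{rq}$,
\[
\langle e_{ij}\otimes e_{kl},\, \tfrac{1}{n}\sum_r e_{pr}\otimes e_{rq}\rangle = \tfrac{1}{n}\sum_r n\delta_{ip}\delta_{jr}\cdot n\delta_{kr}\delta_{lq} = n\,\delta_{ip}\delta_{lq}\delta_{jk}.
\]
The two agree, so the formula for $m^*$ is confirmed. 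Since matrix units span, this determines $m^*$ uniquely.

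Finally, the identity $mm^* = \op{Id}$ is an immediate one-line calculation:
\[
mm^*(e_{ij}) = m\Bigl(\tfrac{1}{n}\sum_k e_{ik}\otimes e_{kj}\Bigr) = \tfrac{1}{n}\sum_k e_{ik}e_{kj} = \tfrac{1}{n}\sum_k e_{ij} = e_{ij},
\]
using that $e_{ik}e_{kj} = e_{ij}$ for every $k\in\{1,\dots,n\}$, which gives exactly $n$ identical terms cancelling the factor of $\frac{1}{n}$. There is no real obstacle here; the only thing that could go wrong is mismatching the normalization of $\tau$ against the factor $\frac{1}{n}$ in $m^*$, and indeed the choice $\tau = n\op{Tr}$ is precisely what makes $mm^* = \op{Id}$ rather than a scalar multiple of the identity (this is the trace-compatibility condition that makes $(M_n,\tau)$ a "special" symmetric Frobenius algebra in the sense used in \cite{MuRuVe18}).
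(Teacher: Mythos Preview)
Your proof is correct and is precisely the ``direct verification'' the paper invokes as its entire proof; you have simply spelled out the computation on matrix units that the paper leaves implicit.
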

\begin{proof}
Direct verification.
\end{proof}

We will first handle the correspondence between subspaces of $M_n$ (quantum relations in the sense of Weaver) and projections in $M_n \otimes M_n^{op}$. 
\begin{proposition}[\cite{MR2908249}, Proposition 2.23]
There is a bijective correspondence between the set of subspaces of $M_n$ and projections in $M_n \otimes M_n^{op}$.
\end{proposition}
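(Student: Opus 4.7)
The plan is to realize $M_n \otimes M_n^{op}$ concretely as the $*$-algebra of linear endomorphisms of $M_n$ via the standard left--right multiplication representation; once that is done, the statement reduces to the familiar bijection between orthogonal projections on a finite-dimensional Hilbert space and its linear subspaces.

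First I would define
\[
\Phi \colon M_n \otimes M_n^{op} \to \End(M_n), \qquad \Phi(a \otimes b^{op})(x) = a x b,
\]
and extend linearly. A direct computation shows $\Phi$ is a unital algebra homomorphism; since both sides have dimension $n^4$ and $M_n \otimes M_n^{op}$ is simple, $\Phi$ is an isomorphism of algebras. Next I would endow $M_n$ with the Hilbert space structure coming from the inner product $\langle x, y\rangle := \tau(y^* x)$, turning $\End(M_n)$ into a $C^*$-algebra. Cyclicity of the trace gives
\[
\langle a x b, y\rangle = \tau(y^* a x b) = \tau\bigl((a^* y b^*)^* x\bigr) = \langle x, a^* y b^*\rangle,
\]
so the adjoint of $\Phi(a \otimes b^{op})$ equals $\Phi(a^* \otimes (b^*)^{op}) = \Phi\bigl((a \otimes b^{op})^*\bigr)$; hence $\Phi$ is in fact a $*$-isomorphism of $C^*$-algebras. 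Consequently projections in $M_n \otimes M_n^{op}$ correspond bijectively to orthogonal projections on the Hilbert space $M_n$, which in turn are in canonical bijection with subspaces $V \subseteq M_n$ via $P \mapsto \op{Range}(P)$, with inverse sending $V$ to the orthogonal projection $P_V$ onto it. Composing the two bijections yields the claimed correspondence between subspaces of $M_n$ and projections in $M_n \otimes M_n^{op}$.

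The argument is essentially formal once $\Phi$ is in hand, so there is no serious obstacle. The only mild bookkeeping concerns the $op$-structure --- in particular the identity $(b^{op})^* = (b^*)^{op}$, which is what makes the $*$-preservation check go through --- and observing that the specific normalization $\tau = n\op{Tr}$ adopted in the paper merely rescales the inner product on $M_n$ and therefore does not affect the set of orthogonal projections, leaving the bijection independent of that convention.
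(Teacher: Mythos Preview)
Your proof is correct and follows essentially the same approach as the paper: both identify $M_n\otimes M_n^{op}$ with $\End(M_n)$ via the left--right action $\Phi$ and then match projections with subspaces via orthogonal projection onto the range. The only cosmetic difference is that the paper phrases the passage from $V$ to its projection through the annihilating left ideal before observing that $\Phi(p_V)$ is the orthogonal projection onto $V$, whereas you invoke the $*$-isomorphism directly; the content is the same.
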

\begin{proof}
  Recall that $M_n \otimes M_n^{op}$ acts on $M_n$ by left-right multiplication. Denote the action map $M_n \otimes M_n^{op} \to B(M_n)$ by $\Phi$; it is actually a linear isomorphism, as verified by a simple dimension count. Let $V \subset M_n$ be a subspace. Define the corresponding left ideal $I_V \subset M_n \otimes M_n^{op}$ via $I_V:= \{x\in M_n\otimes M_n^{op}: \Phi(x)_{|V}=0\}$. It is a well-known fact that left ideals in finite dimensional $C^{\ast}$-algebras are principal and represented by a unique projection $\widetilde{P_{V}}$. The projection $P_V:= Id - \widetilde{P_{V}}$ will be the one we are interested in. So far we have established a map $V \mapsto P_V$, which is actually order preserving. It is not difficult to check that $\Phi(P_V)$ is the orthogonal projection from $M_n$ onto $V$ (with respect to the trace), so the correspondence between $V$ and $P_V$ is bijective.
\end{proof}
We will now go from the projection $P\in M_n \to M_n^{op}$ to the corresponding adjacency matrix. 
\begin{lemma}
There is a bijective correspondence between elements  $X\in M_n \otimes M_n^{op}$ and linear maps $T\in B(M_n)$ given by $T_X(x):= (Id \otimes\tau) (X(\mathds{1}\otimes x))$ and $X_{T}:= (T\otimes Id)m^{\ast}(\mathds{1})$.
\end{lemma}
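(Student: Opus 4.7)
The plan is to reduce the claim to a single identity by dimension count. Both $M_n \otimes M_n^{op}$ and $B(M_n)$ are complex vector spaces of dimension $n^4$, and both assignments $X \mapsto T_X$ and $T \mapsto X_T$ are manifestly $\bC$-linear. It therefore suffices to verify $T_{X_T} = T$ for every $T \in B(M_n)$: injectivity of $T \mapsto X_T$ will then follow, forcing the two maps to be mutually inverse isomorphisms.

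First I would let the preceding lemma do the computational work. Summing $m^{\ast}(e_{ii}) = \tfrac{1}{n}\sum_{k} e_{ik} \otimes e_{ki}$ over $i$ yields $m^{\ast}(\mathds{1}) = \tfrac{1}{n}\sum_{i,k} e_{ik} \otimes e_{ki}$, so that
\[
X_T \;=\; (T \otimes \id)\, m^{\ast}(\mathds{1}) \;=\; \frac{1}{n}\sum_{i,k} T(e_{ik}) \otimes e_{ki}.
\]
Next, I would evaluate $T_{X_T}$ on the matrix units. Using $e_{ki}e_{ab} = \delta_{ia}\,e_{kb}$ together with $\tau(e_{kb}) = n\,\delta_{kb}$, the definition of $T_X$ collapses to
\[
T_{X_T}(e_{ab}) \;=\; \frac{1}{n}\sum_{i,k} \tau(e_{ki}\, e_{ab})\,T(e_{ik}) \;=\; \frac{1}{n}\sum_{k} n\,\delta_{kb}\, T(e_{ak}) \;=\; T(e_{ab}).
\]
Extending by linearity gives $T_{X_T} = T$ on all of $M_n$.

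The calculation is entirely routine, and I do not anticipate any substantive obstacle. The only mild subtlety is that the second tensor factor in $M_n \otimes M_n^{op}$ carries the opposite multiplication, which must be threaded through the definition of $T_X$ when multiplying $X_T \cdot (1 \otimes x)$; because $\tau$ is cyclic, however, the final evaluation is insensitive to this choice of convention, and the identity $T_{X_T} = T$ emerges either way.
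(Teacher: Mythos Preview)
Your proof is correct and follows essentially the same route as the paper. Both arguments compute the Choi-matrix form $X_T = \tfrac{1}{n}\sum_{i,j} T(e_{ij})\otimes e_{ji}$ and then verify compatibility on matrix units; the paper phrases bijectivity via the uniqueness of the expansion $X = \tfrac{1}{n}\sum T_{ij}\otimes e_{ji}$, while you use a dimension count together with $T_{X_T}=T$, but these are cosmetic variants of the same calculation.
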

\begin{proof}
Let us begin by noting that $X_T$ is essentially the Choi matrix of the map $T$: we have 
\begin{equation*}
X_T = \frac{1}{n} \sum_{i,j=1}^{n} T(e_{ij}) \otimes e_{ji}. 
\end{equation*}
On the other hand, any $X \in M_n\otimes M_n^{op}$ can uniquely represented as $X = \frac{1}{n}\sum_{i,j}^{n} T_{ij} \otimes e_{ji}$. By a simple computation we get 
\begin{equation*}
T_X(x) = \sum_{i,j=1}^{n} T_{ij} \op{Tr}(xe_{ji}) = \sum_{i,j=1}^{n} x_{ij}T_{ij}.
\end{equation*}
It follows that $T_{ij} = T_{X}(e_{ij})$, so we indeed have a bijective correspondence.
\end{proof}
We will now describe a class of linear maps on $M_n$ for which the corresponding Choi matrix is a projection.
\begin{proposition}
  The element $P \in M_n \otimes M_n^{op}$ is a projection if and only if the corresponding linear map $A:=T_{P} \in B(M_n)$ satisfies $m(A\otimes A)m^{\ast} = A$ and is completely positive.
\end{proposition}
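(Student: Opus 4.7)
The plan is to decompose the statement ``$P$ is a projection'' into its two constituents---idempotency $P^2=P$ and self-adjoint positivity---and translate each separately into a condition on the associated map $A=T_P$. Recall that in any $C^*$-algebra a projection is the same as a positive idempotent, so it suffices to match $P^2=P$ with $m(A\otimes A)m^*=A$ and $P\ge 0$ with complete positivity of $A$.

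For the idempotency half, I would carry out a direct computation of the product of two elements of $M_n\otimes M_n^{op}$ under the bijection $X\leftrightarrow T_X$. Writing
\begin{equation*}
P=\frac{1}{n}\sum_{a,b}A(e_{ab})\otimes e_{ba},\qquad Q=\frac{1}{n}\sum_{c,d}B(e_{cd})\otimes e_{dc},
\end{equation*}
and using that multiplication in $M_n^{op}$ reverses the order, i.e.\ $e_{ba}\cdot_{op}e_{dc}=e_{dc}e_{ba}=\delta_{cb}\,e_{da}$, a short calculation gives $PQ=\frac{1}{n}\sum_{a,d}C(e_{ad})\otimes e_{da}$ with $C(e_{ad})=\frac{1}{n}\sum_{b}A(e_{ab})B(e_{bd})$. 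Comparing this with the formula $m^*(e_{ad})=\frac{1}{n}\sum_k e_{ak}\otimes e_{kd}$ from the preceding lemma shows that $T_{PQ}=m(A\otimes B)m^*$. Specialising to $Q=P$, $B=A$, the equation $P^2=P$ is equivalent to $m(A\otimes A)m^*=A$.

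For the positivity half, I would invoke Choi's theorem after identifying $M_n^{op}$ with $M_n$ via the transpose $x\mapsto x^{T}$, which is a $*$-isomorphism $M_n^{op}\xrightarrow{\sim}M_n$. Under this identification $e_{ba}\in M_n^{op}$ is sent to $e_{ab}\in M_n$, so $P$ corresponds to $\frac{1}{n}\sum_{a,b}A(e_{ab})\otimes e_{ab}\in M_n\otimes M_n$, which is (up to the harmless factor $\frac{1}{n}$) the standard Choi matrix of $A$. Choi's theorem then yields $P\ge 0\Longleftrightarrow A$ is completely positive. Since any positive element is automatically self-adjoint, complete positivity of $A$ already forces $P=P^*$, so one does not have to separately verify self-adjointness.

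Combining the two equivalences, $P$ is a projection iff $P\ge 0$ and $P^2=P$, iff $A$ is completely positive and $m(A\otimes A)m^*=A$, which is the claim. The only genuinely delicate point is the index bookkeeping in the product computation, because the reversed multiplication in $M_n^{op}$ interacts non-trivially with the placement of the indices coming from the Choi matrix convention; once that is organised, the positivity half is just an appeal to Choi.
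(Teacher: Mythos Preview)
Your proof is correct and follows essentially the same strategy as the paper: split ``projection'' into ``positive'' plus ``idempotent'', handle the first via Choi's theorem and the second by a direct computation of $P^2$ in $M_n\otimes M_n^{op}$ matched against $m(A\otimes A)m^*$. Your version is slightly more detailed---you compute the general product $T_{PQ}=m(A\otimes B)m^*$ and make explicit the transpose identification $M_n^{op}\cong M_n$ underlying the appeal to Choi---but these are elaborations of the same argument rather than a different route.
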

\begin{proof}
Recall that $P$ is of the form $P= \frac{1}{n} \sum_{i,j=1}^{n} A(e_{ij}) \otimes e_{ji}$. It is a projection if and only if it is idempotent and a positive operator. Choi's theorem says that it is positive iff the corresponding linear map $A$ is completely positive. It is idempotent if it is equal to (recall that we multiply in $M_n \otimes M_n^{op})$:
\begin{equation*}
\frac{1}{n^2} \sum_{i,j,k,l=1}^{n} A(e_{ij}) A(e_{kl}) \otimes e_{lk}e_{ji}.
\end{equation*}
It follows that $j=k$ and we are left with
\begin{equation*}
\frac{1}{n^2} \sum_{i,l=1}^{n} \left(\sum_{j=1}^{n} A(e_{ij}) A(e_{jl})\right) \otimes e_{li}.
\end{equation*}
It follows that $\frac{1}{n} \sum_{j=1}^{n} A(e_{ij}) A(e_{jl}) = A(e_{il})$, which means exactly that $m (A\otimes A)m^{\ast} = A$, when applied to $e_{il}$, so we are done.
\end{proof}
\begin{definition}
We call a completely positive map $A: M_n \to M_n$ a \textbf{quantum adjacency matrix} if it satisfies
\begin{equation*}
m(A\otimes A)m^{\ast} = A.
\end{equation*}
\end{definition}

\subsubsection{Symmetry and reflexivity in different guises}
In Weaver's terminology, a quantum graph is a symmetric and reflexive quantum relation: for an operator space $V \in M_n$ these conditions mean that $V=V^{\ast}$ and $\mathds{1} \in V$, respectively. Thus a quantum graph is exactly an operator subsystem of $M_n$. We would like to check now what do these conditions mean in terms of the associated projection and the quantum adjacency matrix. 
\begin{proposition}
Let $V\subset M_n$ and let $P_V \in M_n \otimes M_n^{op}$ be the corresponding projection. Then $V=V^{\ast}$ if and only if $P_V = \sigma(P_V)$, where $\sigma$ is an automorphism of $M_n \otimes M_n^{op}$ that flips the two tensor factors. The condition $\mathds{1} \in V$ corresponds to $m(P_V) = \mathds{1}$, where we abuse the notation slightly and view the multiplication map as acting from $M_n \otimes M_n^{op}$ to $M_n$.
\end{proposition}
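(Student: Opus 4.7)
The plan is to unpack both equivalences using the defining property of $P_V$, namely that $\Phi(P_V)$ is the orthogonal projection of $M_n$ onto $V$ with respect to the trace inner product. Fix any decomposition $P_V = \sum_i a_i \otimes b_i$ and set $A := \Phi(P_V)$, so that $A(z) = \sum_i a_i z b_i$. This lets us translate statements about $V$ directly into statements about $P_V$.

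For the reflexivity claim, I would observe that $A(\mathbb{1}) = \sum_i a_i b_i = m(P_V)$ under the convention $m(x \otimes y) = xy$. Since $A$ is the projection onto $V$, the condition $\mathbb{1} \in V$ is equivalent to $A(\mathbb{1}) = \mathbb{1}$. This immediately yields $\mathbb{1} \in V$ iff $m(P_V) = \mathbb{1}$.

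For the symmetry claim, let $J(x) := x^*$ be the antilinear involution on $M_n$. Because $\langle Jx, Jy\rangle = \overline{\langle x,y\rangle}$ in the trace inner product, $J$ sends orthogonal complements to orthogonal complements, so the operator $JAJ \colon z \mapsto A(z^*)^*$ is idempotent with range $J(V) = V^*$ and kernel $(V^*)^\perp$, hence is the orthogonal projection onto $V^*$. In particular $V = V^*$ iff $A = JAJ$. Now take $\sigma$ to be the conjugate-linear $*$-automorphism of $M_n \otimes M_n^{op}$ given by $\sigma(x \otimes y) := y^* \otimes x^*$ (this is multiplicative because $*$ provides an anti-isomorphism $M_n \to M_n^{op}$). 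A direct computation gives
\[
\Phi(\sigma(P_V))(z) \;=\; \sum_i b_i^* z a_i^* \;=\; \Bigl(\sum_i a_i z^* b_i\Bigr)^* \;=\; A(z^*)^* \;=\; (JAJ)(z),
\]
so $\Phi(\sigma(P_V)) = JAJ$ is the projection onto $V^*$; hence $\sigma(P_V) = P_{V^*}$, and the equivalence $V = V^*$ iff $P_V = \sigma(P_V)$ follows.

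The only subtle point is the correct identification of $\sigma$: the purely linear tensor-flip $x \otimes y \mapsto y^T \otimes x^T$ would instead implement $V \mapsto V^T$, so the conjugate-linear version involving $*$ on both factors is the one that captures self-adjointness. Once this is settled, every step is routine bookkeeping with the trace inner product, and nothing depends on the choice of decomposition of $P_V$ because both $\Phi$ and $\sigma$ are well-defined on tensors.
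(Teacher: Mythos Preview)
Your proof is correct and follows the same line as the paper's: identify the orthogonal projection onto $V^*$ as $z\mapsto (A z^*)^*$ and match the corresponding element of $M_n\otimes M_n^{\mathrm{op}}$ with $\sigma(P_V)$. The only cosmetic difference is in the reading of $\sigma$: you take the conjugate-linear map $x\otimes y\mapsto y^*\otimes x^*$, whereas the paper uses the plain linear flip $x\otimes y\mapsto y\otimes x$ and then invokes the self-adjointness $P_V=P_V^*$ (so $\sum_i X_i\otimes Y_i=\sum_i X_i^*\otimes Y_i^*$) to reach the same element; on the self-adjoint $P_V$ the two agree.
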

\begin{proof}
Recall that $\Phi(P_V): M_n \to M_n$ is the orthogonal projection onto $V$. Therefore $\mathds{1}\in V$ iff $\Phi(P_V) (\mathds{1}) = \mathds{1}$, which can be translated into $m(P_V) = \mathds{1}$. If $P$ is the orthogonal projection onto $V$, then the orthogonal projection onto $V^{\ast}$ is given by $\widetilde{P}(x) := (Px^{\ast})^{\ast}$. If $P_V = \sum_{i} X_i \otimes Y_i$ then $\Phi(P_V)(x) = \sum_{i} X_i x Y_i$. It follows that the orthogonal projection onto $V^{\ast}$ is equal to $x \mapsto\sum_{i} Y_{i}^{\ast} x X_i^{\ast}$. Recall that $P_V=\sum_{i} X_{i}\otimes Y_i= \sum_{i} X_{i}^{\ast}\otimes Y_i^{\ast}$ is self-adjoint, so the projection onto $V^{\ast}$ corresponds to $\sigma(P_V)$, i.e. $V=V^{\ast}$ iff $P_V = \sigma(P_V)$.
\end{proof}
We will now check what the symmetry and reflexivity conditions mean in terms of the quantum adjacency matrix.
\begin{proposition}\label{Prop:symrefl}
Let $V\subset M_n$. We have $V=V^{\ast}$ if and only if the quantum adjacency matrix $A: M_n \to M_n$ is self-adjoint with respect to the Hilbert-Schmidt inner product. The condition $\mathds{1} \in V$ is equivalent to the equality $m(A\otimes Id)m^{\ast}(\mathds{1}) = \mathds{1}$.
\end{proposition}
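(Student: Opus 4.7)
The plan is to reduce both equivalences to the characterizations from the previous proposition, via the bijection $P\leftrightarrow T_P$ from the preceding lemma. For the self-adjointness claim, I would show that the Hilbert-space adjoint $A^{\ast}$ of $A$ on $(M_n,\tau)$ coincides with $T_{\sigma(P_V)}$. Writing $P_V=\sum_k X_k\otimes Y_k$, the formula $A(x)=\sum_k X_k\,\tau(xY_k)$ makes the computation of $A^{\ast}$ a short rearrangement of traces, yielding
\[
A^{\ast}(y)=\sum_k Y_k^{\ast}\,\tau(X_k^{\ast}y).
\]
The crucial step is then to invoke self-adjointness of $P_V$ in $M_n\otimes M_n^{op}$ to rewrite $\sum_k Y_k^{\ast}\otimes X_k^{\ast}=\sum_k Y_k\otimes X_k=\sigma(P_V)$, which gives $A^{\ast}=T_{\sigma(P_V)}$. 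Since $T$ is a linear bijection, $A=A^{\ast}$ is equivalent to $P_V=\sigma(P_V)$, and the previous proposition identifies the latter with the condition $V=V^{\ast}$.

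For the reflexivity claim, I would directly evaluate $(A\otimes Id)m^{\ast}(\mathds{1})$. Substituting $m^{\ast}(\mathds{1})=\tfrac{1}{n}\sum_{i,k}e_{ik}\otimes e_{ki}$ and applying $A\otimes Id$ produces $\tfrac{1}{n}\sum_{i,k}A(e_{ik})\otimes e_{ki}$, which by construction of the Choi matrix of $A$ is exactly $P_V$ (read as an element of $M_n\otimes M_n$). Applying $m$ then returns $m(P_V)$, which the previous proposition identifies with $\mathds{1}$ precisely when $\mathds{1}\in V$.

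I expect no serious obstacle; both arguments are essentially definitional once one commits to the formulas $P_V=\tfrac{1}{n}\sum_{i,j}A(e_{ij})\otimes e_{ji}$ and $A=T_{P_V}$. The only minor bookkeeping point is the distinction between $M_n\otimes M_n$ and $M_n\otimes M_n^{op}$: the underlying vector space and the $\ast$-operation on simple tensors are the same, and $m(a\otimes b)=ab$ does not depend on which product structure is meant, so passing between the two pictures causes no difficulty.
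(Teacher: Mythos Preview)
Your proposal is correct and follows essentially the same strategy as the paper: both reduce the two claims to the characterizations $P_V=\sigma(P_V)$ and $m(P_V)=\mathds{1}$ from the preceding proposition, using the explicit formulas linking $A$ and $P_V$. The reflexivity argument is identical to the paper's.

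For the self-adjointness part there is a minor technical difference worth noting. You compute the Hilbert-space adjoint $A^{\ast}$ directly and identify it with $T_{\sigma(P_V)}$ by invoking $P_V=P_V^{\ast}$ (which holds because $P_V$ is a projection). The paper instead shows that $P_V=\sigma(P_V)$ is equivalent to the trace-symmetry condition $\tau((Ax)y)=\tau(x(Ay))$, and then appeals to complete positivity of $A$ (hence $\ast$-preservation) to pass from trace-symmetry to genuine self-adjointness on the Hilbert space $(M_n,\tau)$. Your route is slightly cleaner in that it does not need to invoke complete positivity as a separate ingredient; the self-adjointness of the projection $P_V$ does the same work.
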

\begin{proof}
Let $P_V$ be the projection corresponding to $V$. Suppose that $V=V^{\ast}$ and hence $P_V = \sum_{i} P_i\otimes Q_i = \sum_{i} Q_i\otimes P_i$. The adjacency matrix is equal to $Ax = \sum_{i} P_i \tau(Q_i x) = \sum_{i} Q_i \tau(P_i x)$. If we compute 
\begin{equation*}
\tau((Ax) y) = \sum_{i} \tau(P_i y) \tau(Q_i x) = \sum_{i} \tau(Q_i y) \tau(P_i x),
\end{equation*}
we quickly see that $V=V^{\ast}$ is equivalent to $\tau((Ax) y) = \tau(x (Ay))$, which combined with complete positivity of $A$ shows that it is self-adjoint, as completely positive maps are $\ast$-preserving.

Consider now the condition $\mathds{1}\in V$. Recall that $P_V = \frac{1}{n} \sum_{i,j=1}^{n} A(e_{ij}) \otimes e_{ji}$, so $m(P_V)=\mathds{1}$ means that
\begin{equation*}
\frac{1}{n}\sum_{i,j} A(e_{ij}) e_{ji} = \mathds{1}.
\end{equation*}
The left-hand side is equal to $m(A\otimes Id)m^{\ast}(\mathds{1})$.
\end{proof}

\subsection{Correspondence between quantum relations and quantum adjacency matrices made concrete}

It will be useful to have a number of different perspectives on quantum graphs. For example, it is very difficult to come up with a good random model of a quantum adjacency matrix, but it is very simple, when we look at the corresponding subspace of a matrix algebra. In order to benefit from the situation, we need a more concrete way of translating from one approach to the other. This is the aim of this subsection.

\begin{proposition}\label{Prop:quantumadj}
  Let $V\subset M_n$ be a $d$-dimensional subspace with an orthonormal basis (with respect to the usual trace) $(A_i)_{i\in [d]}$. Then the orthogonal projection $P_V \in M_n \otimes M_n^{op}$ is equal to $\sum_{i,k,l} A_i e_{kl} \otimes A_{i}^{\ast} e_{lk}$. The quantum adjacency matrix is given by $Ax = n\sum_{i=1}^{d} A_i x A_i^{\ast}$.
\end{proposition}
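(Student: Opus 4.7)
My plan is to verify the two claims by direct computation, using the orthonormal basis $(A_i)$ of $V$ and the following elementary identity in $M_n$:
\begin{equation*}
\sum_{k,l=1}^n e_{kl} Y e_{lk} = \operatorname{Tr}(Y) I_n \quad\text{for every } Y \in M_n,
\end{equation*}
which one checks entry-by-entry (the only surviving contribution is on the diagonal $(k,k)$, equal to $Y_{ll}$, so summing over $l$ produces a trace).

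First I would write down what $\Phi(P_V)$ must be as a map on $M_n$: since $P_V$ corresponds to the $\operatorname{Tr}$-orthogonal projection from $M_n$ onto $V$ (note that orthogonality with respect to $\tau$ and with respect to $\operatorname{Tr}$ coincide, as they differ only by a positive scalar), and since $(A_i)$ is orthonormal for $\operatorname{Tr}$, this projection is simply
\begin{equation*}
P(x) = \sum_{i=1}^d \operatorname{Tr}(A_i^* x)\, A_i.
\end{equation*}

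Next I would plug the candidate $\sum_{i,k,l} A_i e_{kl} \otimes A_i^* e_{lk}$ into $\Phi$ and compute:
\begin{equation*}
\Phi\Bigl(\sum_{i,k,l} A_i e_{kl} \otimes A_i^* e_{lk}\Bigr)(x) \;=\; \sum_i A_i \Bigl(\sum_{k,l} e_{kl}\, (xA_i^*)\, e_{lk}\Bigr).
\end{equation*}
Applying the identity above with $Y = xA_i^*$, the inner sum collapses to $\operatorname{Tr}(xA_i^*) I_n = \operatorname{Tr}(A_i^* x) I_n$, and the whole expression reduces to $\sum_i \operatorname{Tr}(A_i^* x) A_i = P(x)$. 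Since $\Phi$ is a linear isomorphism (by the dimension count recalled earlier), this identifies $P_V$ with the proposed formula.

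For the adjacency matrix I would apply the formula $T_X(x) = (\operatorname{id}\otimes\tau)(X(1\otimes x))$ to $X = P_V$. Writing $P_V$ as above and recalling that $\tau = n\operatorname{Tr}$, one computes
\begin{equation*}
T_{P_V}(x) = n\sum_{i,k,l} A_i e_{kl}\, \operatorname{Tr}(A_i^* e_{lk} x) = n \sum_i A_i \Bigl(\sum_{k,l} (xA_i^*)_{kl}\, e_{kl}\Bigr) = n\sum_i A_i x A_i^*,
\end{equation*}
using $\operatorname{Tr}(A_i^* e_{lk} x) = \operatorname{Tr}(e_{lk} x A_i^*) = (xA_i^*)_{kl}$ in the middle step. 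This is the claimed quantum adjacency matrix. The only nontrivial point is keeping the two bijections $X\mapsto\Phi(X)$ and $X\mapsto T_X$ straight—and correctly tracking the factor of $n$ coming from the normalization $\tau = n\operatorname{Tr}$ versus the inner product used to fix the basis $(A_i)$; everything else is bookkeeping.
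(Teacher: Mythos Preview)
Your proof is correct and follows essentially the same route as the paper's: both use the identity $\sum_{k,l} e_{kl} Y e_{lk} = \operatorname{Tr}(Y)\,\mathds{1}$ to identify $\Phi(P_V)$ with the orthogonal projection onto $V$, and both compute the adjacency matrix by unpacking $T_{P_V}(x)=(\mathrm{id}\otimes\tau)(P_V(1\otimes x))$ entrywise. The only cosmetic difference is that the paper expands $A_i e_{kl}$ before summing over $k$, whereas you recognize $\sum_{k,l}(xA_i^*)_{kl}\,e_{kl}=xA_i^*$ directly; the content is identical.
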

\begin{proof}
The orthogonal projection onto $V$ is given by $Px = \sum_{i=1}^{d} A_i \op{Tr}(x A_i^{\ast})$. Note that $\op{Tr}y \mathds{1} = \sum_{k,l} e_{kl} y e_{lk}$, so 
\begin{equation*}
Px = \sum_{i,k,l} A_i e_{kl} x A_i^{\ast} e_{lk}.
\end{equation*}
It follows that for $X:= \sum_{i,k,l} A_i e_{kl} \otimes A_i^{\ast} e_{lk}$, $\Phi(X) = P$, so $X  =P_V$. If we have our projection, we can easily compute the adjacency matrix
\begin{equation*}
Ax = \sum_{i,k,l} A_i e_{kl} \tau(x A_{i}^{\ast} e_{lk}) = n\sum_{i,k,l} A_i e_{kl} (xA_{i}^{\ast})_{kl}.
\end{equation*}
Note that $A_{i} e_{kl} = \sum_{r} (A_{i})_{rk} e_{rl}$, so we obtain
\begin{equation*}
Ax = n\sum_{i,k,l,r} (A_{i})_{rk} (xA_{i}^{\ast})_{kl} e_{rl}. 
\end{equation*}
After summing over $k$ we finally get
\begin{equation*}
Ax = n\sum_{i} A_i x A_i^{\ast}.
\end{equation*}
\end{proof}
\subsubsection{Automorphisms of quantum graphs}
We will now see how to define automorphisms of quantum graphs in all three pictures described above.
\begin{definition}
Let $V \subset M_n$ be a quantum relation. An automorphism of $V$ is a $\ast$-automorphism $\Phi: M_n \to M_n$ such that $\Phi(V) = V$. Since any $\ast$-automorphism of $M_n$ is given by a conjugation by a unitary, we will usually identify automorphisms of quantum relations with unitaries $U \in M_n$ such that $UVU^{\ast} = V$.
\end{definition}
\begin{proposition}\label{pr:vinv}
Let $V \subset M_n$ be a quantum relation. Let $P = \sum_{i} P_{i} \otimes Q_{i} \in M_n \otimes M_n^{\op{op}}$ be the corresponding orthogonal projection and let $A:M_n \to M_n$ be the quantum adjacency matrix. Then a unitary $U\in M_n$ is an automorphism of $V$ iff $P = \sum_{i} UP_{i} U^{\ast} \otimes UQ_{i} U^{\ast}$ iff $U^{\ast} A(x) U = A(U^{\ast}xU)$. In particular, any unitary $U$ that gives an automorphism commutes with the \textbf{degree matrix} $D:= A\mathds{1}$.
\end{proposition}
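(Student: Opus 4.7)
The plan is to exploit the fact that the three equivalent descriptions of a quantum relation $V \subset M_n$---the subspace $V$ itself, its associated projection $P_V \in M_n \otimes M_n^{\op{op}}$, and its quantum adjacency matrix $A$---are intertwined via bijective correspondences that are \emph{equivariant} under the conjugation action $\op{Ad}_U : x \mapsto UxU^*$ of a unitary. Once one confirms equivariance in each case, the three conditions in the proposition are literally the same statement, read off in three different pictures.

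For the equivalence of $UVU^* = V$ with $P_V = \sum_i UP_iU^* \otimes UQ_iU^*$, I would appeal to the fact that $\Phi(P_V): M_n \to M_n$ is the orthogonal projection onto $V$ for the trace inner product. Since $\op{Ad}_U$ is a trace-preserving $*$-automorphism, it is an isometry for this inner product, so the orthogonal projection onto $UVU^*$ is exactly $\op{Ad}_U \circ \Phi(P_V) \circ \op{Ad}_{U^*}$, which I would compute to equal $\Phi\bigl(\sum_i UP_iU^* \otimes UQ_iU^*\bigr)$. Bijectivity of $\Phi$ then turns the equality $V = UVU^*$ of subspaces into the claimed equality of elements of $M_n \otimes M_n^{\op{op}}$.

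For the equivalence with $U^*A(x)U = A(U^*xU)$, the cleanest route uses \Cref{Prop:quantumadj}: if $(A_\alpha)$ is a trace-orthonormal basis of $V$, then $(UA_\alpha U^*)$ is a trace-orthonormal basis of $UVU^*$, and its adjacency matrix is $x \mapsto n\sum_\alpha UA_\alpha U^* x UA_\alpha^*U^* = U A(U^*xU) U^*$. Hence $UVU^* = V$ forces these adjacency matrices to coincide (by uniqueness of the bijection $V \leftrightarrow A$), which rearranges to the stated identity; conversely the identity forces equality of the two adjacency matrices, and therefore of the two subspaces. The last clause is immediate: specializing $x = \mathds{1}$ and using $U^*\mathds{1}U = \mathds{1}$ gives $U^*DU = D$, i.e.\ $[U,D] = 0$. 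No step here is hard, but the small bookkeeping trap to watch out for is tracking which of $\op{Ad}_U$ and $\op{Ad}_{U^*}$ appears in the formulas, so that the three equivalences line up without a misplaced inverse.
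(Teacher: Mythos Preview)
Your proposal is correct and follows essentially the same approach as the paper: both arguments identify $P_{UVU^*}$ as $\sum_i UP_iU^*\otimes UQ_iU^*$ via the conjugation formula for the orthogonal projection, and both deduce the degree-matrix commutation by specializing to $x=\mathds{1}$. The only cosmetic difference is that for the adjacency-matrix equivalence the paper computes $A_U$ from the $\tau$-pairing formula $Ax=\sum_i P_i\,\tau(Q_i x)$ attached to $P_U$, whereas you invoke the orthonormal-basis formula of \Cref{Prop:quantumadj}; both routes are one-line computations and yield the same identity.
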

\begin{proof}
Let $P_V: M_n \to M_n$ be the orthogonal projection onto $V$, represented by $P \in M_n \otimes M_n^{\op{op}}$. The orthogonal projection onto $UVU^{\ast}$ is given by $P_{UVU^{\ast}}x = U P_V(U^{\ast}x U) U^{\ast}$. It readily follows that if $P =\sum_{i} P_{i} \otimes Q_{i}$ then $P_{U} \in M_n \otimes M_n^{op}$ corresponding to $P_{UVU^{\ast}}$ is equal to $\sum_{i} UP_{i} U^{\ast} \otimes UQ_{i} U^{\ast}$. 

Consider now the quantum adjacency matrix (see the proof of \Cref{Prop:symrefl}) $Ax:= \sum_{i} P_i \tau(Q_i x)$. The quantum adjacency matrix corresponding to $P_{U}$ is given by
\begin{equation*}
  A_{U}x:= \sum_{i} U P_{i} U^{\ast} \tau(x U Q_i U^{\ast}) = \sum_{i} U P_{i} U^{\ast} \tau (U^{\ast} x U Q_i).
\end{equation*}
The two are equal if and only if $A(U^{\ast} x U) = U^{\ast} A(x) U$. If we apply this equality to $x=\mathds{1}$ then we get $A\mathds{1} = U^{\ast} A\mathds{1} U$, that is $U D = D U$.
\end{proof}

A tensor-category-theoretic interpretation of \Cref{pr:vinv} would run as follows: the compact group $U_n$ acts on $M_n$ by conjugation, and consider the subgroup $G\le U_n$ leaving $V$ invariant. The dual of $M_n$ as a $U_n$-representation is isomorphic to the complex conjugate space $\overline{M_n}\cong M_n$, via the trace pairing:
\begin{equation*}
  M_n\otimes M_n\ni A\otimes B\mapsto \mathrm{Tr}(A^*B). 
\end{equation*}
Under $G$ the dual $\overline{V}\subset \overline{M_n}$ is also conjugation-invariant, and upon identifying
\begin{equation*}
  V\otimes \overline{V} \cong \mathrm{End}(V)
\end{equation*}
the identity $\id\in \mathrm{End}(V)$, corresponding to
\begin{equation}\label{eq:db}
  \sum_{i=1}^e S_i\otimes T_i\in V\otimes \overline{V}
\end{equation}
for trace-dual bases $\{S_i\}\subset V$ and $\{T_i\}\subset \overline{V}$, must also be $G$-invariant. In short: conjugation by $U\otimes U$, $U\in G$ fixes \Cref{eq:db} for any
\begin{itemize}
\item basis $\{S_i\}$ for $V$;
\item with dual basis $\{T_i\}$ with respect to the trace pairing. 
\end{itemize}

Note furthermore that since $M_n\cong \overline{M_n}$ via the adjoint map $A\mapsto A^*$ and $V$ is self-adjoint (being an operator system), we have $\overline{V}=V$. Finally, the degree matrix is nothing but the image of \Cref{eq:db} through the multiplication $M_n\otimes M_n\to M_n$, which itself is $U_n$-equivariant.

\subsection{Random models of quantum graphs}
In this section we will describe the random models that we will be dealing with. For classical graphs there are two main models of interest (both called Erd\H{o}s-R\'{e}nyi models): $G(n,p)$, where $0<p<1$ and $G(n,M)$, where $0\leqslant M \leqslant \binom{n}{2}$. In the first one two vertices are connected by an edge with probability $p$, independently over all pairs of vertices, and $G(n,M)$ is a uniform distribution over all graphs with exactly $M$ edges. In the $G(n,p)$ the probability that the graph has exactly $M$ edges is given by the binomial distribution
\begin{equation*}
  \binom{\binom{n}{2}}{M}p^{M} (1-p)^{\binom{n}{2} - M}
\end{equation*}
and all graphs with $M$ edges have exactly the same probability. It follows that the $G(n,p)$ can be recovered as a weighted sum of the $G(n,M)$ models and this is the approach that we will take, since there is no reasonable notion of a vertex of a quantum graph, so it does not seem feasible to define the $G(n,p)$ model directly.

Traditionally random graphs are undirected and we will follow this tradition. Moreover we will consider reflexive, undirected graphs, i.e. we will be dealing with operator subsystems of the matrix algebra $M_n$. The dimension of such a subsystem can be interpreted as the average degree of our quantum graph; indeed, by \Cref{Prop:quantumadj} the normalized trace of the degree matrix $D=A\mathds{1}$ is equal to $d$, the dimension of our operator system. Actually, since we are dealing with reflexive graphs, it is more reasonable to call the number $d-1$ the average degree.

In order to find a good random model of operator subsystems of a given dimension $d+1$, we can notice that that its sufficient to take a random $d$-dimensional subspace of Hermitian matrices and add identity to it. Among the most famous models of Hermitian matrices is the GUE (the Gaussian unitary ensemble), which we will employ. Recall that in this model the probability measure has a density with respect to the Lebesgue measure on Hermitian matrices proportional to $\exp(-\frac{n}{2} \op{Tr} (A^2))$ (see \cite[\S 2.6]{MR2906465}).
\begin{definition}
Fix $n\in \bN$ and $0\leqslant d \leqslant n^2-1$. We define the model $QG(n,d)$ to be a random operator system $V_d:= \op{span}(\mathds{1}, X_{1}, \dots, X_d\}$, where $X_1,\dots, X_d$ are independent GUE $n\times n$ matrices. Actually, we will take $X_{i}$ to be the traceless GUE matrix, i.e. the centered version of GUE which is constructed from the usual GUE by subtracting an appropriate multiple of the identity.
\end{definition}

\begin{remark}
  In the specified range of $d$ the GUE matrices $X_1,\dots, X_d$ are almost surely linearly independent, so their span is a $d$-dimensional subspace of the space $M_{n,H}^{0}$ of traceless Hermitian matrices, which has (real) dimension $n^2-1$. We have therefore a map from a full measure subset of $\left(M_{n,H}^{0}\right)^{d}$ to the Grassmannian $\op{Gr}(d,n^2-1)$, which maps the tuple $(X_1,\dots, X_d)$ to its span. We can consider the probability measure on the Grassmannian by pushing forward via this map. As the density of GUE is proportional to $\exp(-\op{Tr} X^2)$, it is invariant under all maps on $M_{n,H}^{0}$ orthogonal with respect to the Hilbert-Schmidt product, hence so is its pushforward. It follows that the measure on the Grassmannian is invariant under $O(n^2-1)$, i.e. it has to be the unique Haar measure. This is the right analogue of a uniform distribution appearing in the classical $G(n,M)$ model.
\end{remark}
It is not difficult now to check that \Cref{Cor:trivautopsys} is a consequence of \Cref{Thm:trivaut}. Before doing that, recall that Hamana introduced in \cite{MR566081} the $C^{\ast}$-envelope of an operator system $S$, which is the unique $C^{\ast}$-algebra $C^{\ast}(S)$ containing $S$ completely order isomorphically, such that whenever $S$ is embedded completely order isomorphically inside a $C^{\ast}$-algebra $B$ then the identity on $S$ extends to a quotient from the $C^{\ast}$-algebra generated by $S$ inside of $B$ onto $C^{\ast}(S)$. Recall also that an automorphism of an operator system $S$ is a linear bijection $\Phi: S \to S$ that is a unital complete order isomorphism, i.e. both $\Phi$ and $\Phi^{-1}$ are completely positive.
\begin{proposition}\label{Prop:opsysaut}
Fix $n\geqslant 3$ and $2\leqslant d \leqslant n^2-3$. If the automorphism group of the random quantum graph $QG(n,d)$ is almost surely trivial, then the automorphism group of the underlying operator system is almost surely trivial.
\end{proposition}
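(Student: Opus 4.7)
The plan is to reduce the statement about operator system automorphisms to the already-assumed statement about quantum graph automorphisms via Hamana's $C^*$-envelope. The whole argument rests on showing that for generic $V$ in the $QG(n,d)$ model, the $C^*$-envelope $C^*_e(V)$ is just $M_n$; once this is in hand, every unital complete order isomorphism of $V$ extends uniquely to a $*$-automorphism of $M_n$, hence to conjugation by a unitary preserving $V$, i.e.\ to an element of the quantum graph automorphism group.

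First I would prove the following deterministic fact: whenever $V\subset M_n$ is an operator system that generates $M_n$ as a $C^*$-algebra, one has $C^*_e(V)=M_n$. This is a standard consequence of Hamana's construction together with simplicity of $M_n$: the Shilov boundary ideal $J\triangleleft M_n$ associated to the inclusion $V\hookrightarrow M_n$ is a two-sided ideal, and since $M_n$ is simple either $J=0$ (in which case $M_n=C^*_e(V)$) or $J=M_n$ (impossible since the quotient would have to contain a completely order isomorphic copy of $V\ni\mathds{1}$).

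Next I would verify that for almost every $V$ in $QG(n,d)$ with $2\le d\le n^2-3$, the operator system $V$ generates $M_n$ as a $C^*$-algebra. Since $V$ contains the identity and at least two independent traceless self-adjoint GUE matrices $X_1,X_2$, it suffices to show that $\{X_1,X_2\}$ generates $M_n$ a.s. The set of pairs $(Y_1,Y_2)$ of self-adjoint matrices generating a proper $*$-subalgebra of $M_n$ is a proper Zariski-closed subset of $(M_{n,H}^0)^2$: indeed, $\{X_1,X_2\}$ fails to generate $M_n$ precisely when there is a nontrivial orthogonal decomposition $\bC^n=\bigoplus H_i$ preserved by both, and for any fixed partition and block structure the condition of respecting it is polynomial. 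Since an explicit generating pair exists (e.g.\ a generic diagonal and a generic permutation-like self-adjoint), the bad locus is a proper closed subvariety, hence has Lebesgue measure zero, and a fortiori GUE measure zero.

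With $C^*_e(V)=M_n$ a.s., I would finish as follows. Let $\Phi:V\to V$ be a UCOI. By the universal property of $C^*_e(V)$, the composition $V\xrightarrow{\Phi}V\hookrightarrow M_n=C^*_e(V)$ extends to a unital $*$-homomorphism $\tilde\Phi:M_n\to M_n$, which is an automorphism because $\Phi^{-1}$ yields an inverse by the same construction. Every $*$-automorphism of $M_n$ is inner, so $\tilde\Phi=\mathrm{Ad}(U)$ for some unitary $U\in M_n$; by construction $\mathrm{Ad}(U)(V)=V$, so $\mathrm{Ad}(U)$ is a quantum graph automorphism of $V$. By the hypothesis of the proposition this quantum graph automorphism is trivial a.s., i.e.\ $\mathrm{Ad}(U)=\id_{M_n}$, whence $\tilde\Phi=\id$ and in particular $\Phi=\id_V$. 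The main obstacle I anticipate is not conceptual but presentational: pinning down that the generating locus inside the Grassmannian carries full Haar measure (which I would do by transporting through the GUE-to-Haar pushforward already noted in the preceding remark), and being careful that Hamana's extension is automatic for UCOIs rather than merely unital completely positive maps.
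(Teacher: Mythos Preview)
Your proposal is correct and follows essentially the same route as the paper: show that for $d\ge 2$ two independent GUE matrices almost surely generate $M_n$, deduce from simplicity that $C^*_e(V)=M_n$, and then use the universal property of the $C^*$-envelope to extend any unital complete order automorphism of $V$ to a $*$-automorphism of $M_n$ (hence a quantum graph automorphism), which is trivial by hypothesis. Your version simply fills in more detail at each step than the paper's terse proof.
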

\begin{proof}
Since $d\geqslant 2$, the $C^{\ast}$-algebra generated by $V_{d}$ is almost surely equal to $M_n$, as two independent GUE matrices almost surely generate the full matrix algebra (see \Cref{lem:twogenerate}). Since the algebra $M_n$ is simple, it follows that the $C^{\ast}$-envelope of $V_d$ is equal to $M_n$. It follows that any unital complete order automorphism of $V_d$ gives rise to an automorphism of $M_n$, which is exactly an automorphism of $V_d$ viewed as a quantum graph. It follows that the automorphism of $V_d$ had to be the identity.
\end{proof}

It is time to introduce the analogue of the $G(n,p)$ model for quantum graphs.
\begin{definition}
Fix $n\in \bN$ and $0<p<1$. We define the random model $QG(n,p)$ in the following way: we first choose a random $0\leqslant d \leqslant n^2-1$ according to the binomial distribution $B(n^2-1,p)$ and then choose a random operator subsystem of $M_n$ according to the model $QG(n,d)$.
\end{definition}
Here we can already show that \Cref{Cor:trivaut} follows easily from \Cref{Thm:trivaut}.
\begin{proposition}\label{Prop:qgnp}
If for any $d$ between $2$ and $n^2-3$ the automorphism group of a quantum graph in the $QG(n,d)$ model is almost surely trivial then the automorphism group of a quantum graph in the $QG(n,p(n))$ model is asymptotically almost surely trivial as soon as $\lim_{n\to\infty} n^2 p(n)=\infty$ and $\lim_{n\to\infty} n^2(1-p(n))=\infty$. If $p(n)$ satisfies $\lim_{n\to\infty} n^{2-\varepsilon} p(n)=\infty$ and $\lim_{n\to\infty} n^{2-\varepsilon}(1-p(n))=\infty$ for some $\varepsilon > 0$ then it holds with overwhelming probability.
\end{proposition}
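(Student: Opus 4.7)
The plan is simply to unpack the definition of $QG(n,p)$ as a binomial mixture of the $QG(n,d)$ models, isolate the ``bad'' values of $d$ not covered by the hypothesis, and bound the corresponding binomial tail probabilities.

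Concretely, let $\mathrm{Aut}(V)$ denote the automorphism group of the random quantum graph $V$ sampled from $QG(n,p(n))$. By the definition of the model and the hypothesis, for each $d \in \{2,\dots,n^2-3\}$ the conditional probability $P(\mathrm{Aut}(V)\neq\{1\}\mid d)$ is zero. Hence
\begin{equation*}
  P(\mathrm{Aut}(V)\neq\{1\})\ \le\ \sum_{d\in\{0,1,n^2-2,n^2-1\}}\binom{n^2-1}{d}\,p^d(1-p)^{n^2-1-d}.
\end{equation*}
So the whole task reduces to showing that the four boundary terms, namely
\begin{equation*}
  (1-p)^{n^2-1},\qquad (n^2-1)\,p\,(1-p)^{n^2-2},\qquad (n^2-1)\,p^{n^2-2}(1-p),\qquad p^{n^2-1},
\end{equation*}
tend to zero (or decay faster than any polynomial in $n^{-1}$) under the stated hypotheses on $p(n)$.

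Set $\lambda:=(n^2-1)\,p(n)$ and $\mu:=(n^2-1)\,(1-p(n))$; the assumption $n^2 p(n)\to\infty$ gives $\lambda\to\infty$, and $n^2(1-p(n))\to\infty$ gives $\mu\to\infty$. Using $(1-p)^{n^2-1}\le e^{-\lambda}$ handles the first term. For the second, $(n^2-1)p(1-p)^{n^2-2}\le \lambda\cdot e^{-\lambda(n^2-2)/(n^2-1)}$, which is at most $2\lambda e^{-\lambda/2}\to 0$. The third and fourth are symmetric: use $p^{n^2-1}\le e^{-\mu}$ and $(n^2-1)(1-p)p^{n^2-2}\le 2\mu e^{-\mu/2}$. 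Each of these four bounds tends to $0$, proving the first (asymptotically almost surely) assertion.

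For the overwhelming-probability statement, observe that under $n^{2-\varepsilon}p(n)\to\infty$ we have $\lambda\ge n^{\varepsilon}\,\omega(n)$ for some $\omega(n)\to\infty$, so $\lambda e^{-\lambda/2}\le \exp\!\bigl(-\tfrac12 n^{\varepsilon}\omega(n)+\log\lambda\bigr)$, which decays faster than any power $n^{-c}$ as soon as $n$ is large. The analogous bound with $\mu$ in place of $\lambda$ handles the two terms involving $1-p$. Summing the four contributions therefore yields a bound of the form $O(n^{-c})$ for every $c>0$, which is precisely \Cref{def.hp}. The only subtle point, and the place one must be a little careful, is the term $(n^2-1)p^{n^2-2}(1-p)$: here $p^{n^2-2}$ is close to $1$ and it is the factor $1-p$ combined with $\mu\to\infty$ (rather than any decay of $p^{n^2-2}$ on its own) that produces the required $\mu e^{-\mu/2}$ estimate via $p\le 1-\mu/(n^2-1)$.
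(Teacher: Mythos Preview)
Your proof is correct and follows essentially the same approach as the paper: both identify the four boundary values $d\in\{0,1,n^2-2,n^2-1\}$, write out the corresponding binomial terms, and bound them via $1-x\le e^{-x}$ to obtain exponential decay in $\lambda=(n^2-1)p$ and $\mu=(n^2-1)(1-p)$. Your treatment of the overwhelming-probability clause is slightly more explicit than the paper's (which simply says ``one can take advantage of the exponential decay''), and your final paragraph's commentary is a bit muddled in its phrasing---the $e^{-\mu/2}$ factor really does come from bounding $p^{n^2-2}$, not from the lone factor $1-p$---but the inequalities themselves are sound.
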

\begin{proof}
Assume that for $d$ between $2$ and $n^2-3$ the automorphism group is almost surely trivial. Note that it can be nontrivial if $d$ happens to be equal to $0$, $1$, $n^2-2$, or $n^2-1$. In the model $QG(n,p)$ number $d$ is chosen according to the binomial distribution $N(n^2-1,p)$ (i.e. $0\leqslant d \leqslant n^2-1$), so the probability that it is equal to one of those four numbers is equal to $P_n:= (1-p)^{n^2-1} + (n^2-1)p(1-p)^{n^2-2} + (n^2-1)p^{n^2-2}(1-p) + p^{n^2-1}$. Let us look at the first term $(1-p)^{n^2-1}$. It can be rewritten as $\left((1-p)^{\frac{1}{p}}\right)^{(n^2-1)p}$. By the inequality $1-x \leqslant e^{-x}$, we have that $(1-p)^{\frac{1}{p}} \leqslant e^{-1}$, so this whole expression is bounded above by $\exp(-(n^2-1)p))$. If $p(n)\cdot n^2$ converges to infinity, then this expression converges to $0$. If we denote $q_n:= (n^2-1)p(n)$, then the second term in in $P_n$ can be bounded above by $q_n e^{-q_n}$, so it also converges to $0$. The last two terms can be handled in the same way provided that we have $\lim_{n\to\infty} (1-p(n))n^2=\infty$. 

If the slightly stronger assumptions $\lim_{n\to\infty} n^{2-\varepsilon} p(n)=\infty$ and $\lim_{n\to\infty} n^{2-\varepsilon}(1-p(n))=\infty$ hold, then one can take advantage of the exponential decay to show that the automorphism group is trivial with overwhelming probability.
\end{proof}
\section{Small quantum automorphism groups}\label{se:qaut}

The present section is devoted to results to the effect that various structures (graphs, quantum sets) have small quantum automorphism groups ``generically''.

\subsection{Enhanced quantum sets}

\begin{definition}\label{def.qs}
  A {\it finite measured quantum set} is a pair $X=(\cO(X),\psi_X)$ consisting of a finite-dimensional $C^*$-algebra $\cO(X)$ and a state $\psi_X$ on it. The {\it size} or {\it cardinality} $|X|$ of $X$ is $\dim \cO(X)$. 

  We often abbreviate the term to `quantum set' and write $L^2(X)$ for the Hilbert space structure induced on $\cO(X)$ by the state.

  An {\it enhanced quantum set} is a finite measured quantum set $X$ equipped with a self-adjoint operator $A_X$ on $L^2(X)$. An enhanced quantum set is {\it demi-classical} if the underlying algebra $\cO(X)$ is commutative.

  Unless specified otherwise states $\psi_X$ as above are assumed faithful.
\end{definition} 
The quantum automorphism groups of finite dimensional $C^{\ast}$-algebras have been defined in \cite{Wan98}, while the quantum automorphism groups have been defined  in \cite{MR2146039}. The definition of a quantum automorphism group of an enhanced quantum set can be naturally deduced from these two cases. We will only consider the quantum automorphism group of demi-classical enhanced quantum sets, so we provide a precise definition in this context.
\begin{definition}\label{def:qaut}
Let $[n]:=\{1,\ldots, n\}$ be a finite set, $\psi: \bC^n \to \bC$ a state and $A: \bC^n \to \bC^n$ a self-adjoint map with respect to the inner product induced by $\psi$. The quantum automorphism group of the finite set $[n]$ is the universal $C^{\ast}$-algebra $C(S_n^{+})$ generated by elements $(u_{ij})_{i,j\in [n]}$ subject to relations
\begin{itemize}
\item the matrix $U:=(u_{ij})_{i,j\in [n]}$ is unitary;
\item each $u_{ij}$ is a projection, i.e. $u_{ij} = u_{ij}^{\ast} = u_{ij}^2$;
\item rows and columns of the matrix $U$ sum to the identity, i.e. $\sum_{i=1}^{n} u_{ij} = \mathds{1}$ and $\sum_{i=1}^{n} u_{ji} = \mathds{1}$ for all $j\in [n]$.
\end{itemize}
The assignment $u_{ij} \mapsto \sum_{k=1}^{n} u_{ik} \otimes u_{kj}$ extends to a coassociative comultiplication $\Delta: C(S_n^{+}) \to C(S_n^{+})\otimes C(S_n^{+})$, endowing $C(S_n^{+})$ with the structure of a compact quantum group.

Let $\alpha: \bC^n \to \bC^n \otimes C(S_n^{+})$ be the universal action of $S_n^{+}$ on $\bC^n$, i.e. the map $e_{i}\mapsto \sum_{j=1}^{n} e_{j} \otimes u_{ji}$. The quantum automorphism group of the demi-classical enhanced quantum set $(\bC^n, \psi, A)$ is the quotient  of $C(S_n^{+})$ by the following relations:
\begin{itemize}
\item $\alpha \circ A = (A\otimes \op{Id}) \circ \alpha$;
\item $(\psi \otimes \op{Id})\circ \alpha(\cdot) = \psi(\cdot)\mathds{1}$.
\end{itemize}
\end{definition}
We give sufficient conditions that will ensure the quantum automorphism group of a demi-classical enhanced quantum set is trivial. This requires some preparation.

\begin{definition}\label{def.overlap}
  Let $(e_i)_{i=1}^n$ be a basis of a finite-dimensional vector space $V$. Two vectors $v,w\in V$ {\it overlap with respect to $(e_i)$} (or simply {\it overlap} when the basis is understood) if for at least one of the $e_i$'s the coefficients of both $v$ and $w$ with respect to this $e_i$ are non-zero.

  An operator $A\in \mathrm{End}(V)$ is {\it thick (with respect to $(e_i)$)} if all of its eigenvectors mutually overlap.
\end{definition}

In the context of a demi-classical enhanced quantum set the basis will always be the standard one, consisting of minimal projections of $\cO(X)$, and we will be typically interested in the thickness of the adjacency matrix $A_X$.

\begin{proposition}\label{pr.trv}
  Let $X=(\cO(X),\psi_X,A_X)$ be a demi-classical enhanced quantum set whose adjacency matrix $A_X$ has simple spectrum and is thick. Then, the quantum automorphism group $G_X$ is isomorphic to $(\bZ/2)^n$ for some $n$.
\end{proposition}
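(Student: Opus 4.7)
The strategy is to produce $\cO(G_X)$ as the Hopf algebra generated by commuting self-adjoint group-like involutions, one per eigenvector of $A_X$.

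\emph{Step 1: diagonalising the coaction.} Let $(e_i)_{i=1}^N$ be the minimal projections of $\cO(X)$, and let
\begin{equation*}
\alpha \colon \cO(X) \to \cO(X)\otimes\cO(G_X), \qquad \alpha(e_j) = \sum_i e_i\otimes u_{ij}
\end{equation*}
be the defining coaction, with magic unitary $(u_{ij})\in M_N(\cO(G_X))$. Choose a basis $\{v_k\}$ of self-adjoint eigenvectors of $A_X$ in $\cO(X)$, $v_k = \sum_i c^k_i\, e_i$ with real $c^k_i$. Since $\alpha$ intertwines $A_X$ and each eigenspace is one-dimensional, there exist unique $z_k\in\cO(G_X)$ with
\begin{equation*}
\alpha(v_k) = v_k\otimes z_k,
\end{equation*}
equivalently $\sum_j u_{ij} c^k_j = c^k_i z_k$. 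Inverting this linear system (the $c^k$ form a basis of $\bC^N$), each $u_{ij}$ lies in the linear span of the $z_k$, so the $z_k$ generate $\cO(G_X)$.

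\emph{Step 2: self-adjoint group-like involutions.} Coassociativity $(\alpha\otimes\id)\alpha = (\id\otimes\Delta)\alpha$ applied to $v_k$ forces $\Delta(z_k) = z_k\otimes z_k$: each $z_k$ is group-like. From $v_k^* = v_k$ together with $\alpha$ being a $*$-homomorphism, $v_k\otimes z_k^* = \alpha(v_k)^* = \alpha(v_k^*) = v_k\otimes z_k$, hence $z_k^* = z_k$. Finally, unitarity of the magic unitary $(u_{ij})$ yields $z_k^* z_k = 1$, and combined with self-adjointness this gives $z_k^2 = 1$.

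\emph{Step 3: commutativity from thickness.} Thickness is exactly what is needed to compare multiplications. For any pair $k,\ell$ it supplies an index $i$ with $c^k_i c^\ell_i \ne 0$, so
\begin{equation*}
v_k v_\ell = \sum_i c^k_i c^\ell_i\, e_i \ne 0
\end{equation*}
in the commutative algebra $\cO(X)$. Applying $\alpha$ two ways,
\begin{equation*}
v_k v_\ell\otimes z_k z_\ell = \alpha(v_k)\alpha(v_\ell) = \alpha(v_k v_\ell) = \alpha(v_\ell v_k) = v_k v_\ell\otimes z_\ell z_k,
\end{equation*}
and the non-vanishing of $v_k v_\ell$ forces $z_k z_\ell = z_\ell z_k$.

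\emph{Step 4: conclusion and main obstacle.} Thus $\cO(G_X)$ is a finite-dimensional $*$-Hopf algebra generated by commuting self-adjoint involutive group-likes. Being generated by commuting group-likes makes it commutative and cocommutative; being generated by self-adjoint involutions pins it down, up to isomorphism, as the function algebra $C((\bZ/2)^n)$ of an elementary abelian $2$-group, so $G_X\cong(\bZ/2)^n$ for some $n\le N-1$. The one delicate point is the choice of self-adjoint eigenbasis in Step~1: this requires $A_X$ to commute with the canonical conjugation $a\mapsto a^*$ on $L^2(X)$, so that real-valued eigenvectors exist in the self-adjoint part of $\cO(X)$. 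For the classical/demi-classical adjacency-matrix setup this is built in, but in the more general setup of \Cref{def.qs} it is an additional (mild) compatibility that should be remarked upon at the outset of the proof.
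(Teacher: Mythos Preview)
Your proof is correct and follows essentially the same strategy as the paper: extract one group-like element per $A_X$-eigenvector, use thickness to show these group-likes commute, and conclude $\cO(G_X)$ is the group algebra of an elementary abelian $2$-group. The paper's proof and yours diverge only in how the order-$2$ property is obtained: the paper first deduces from commutativity that $G_X$ is classical (as the Pontryagin dual $\widehat{\Gamma}$ of the abelian group of group-likes), and then argues at the level of honest permutations that real eigenvectors can only be scaled by $\pm 1$; you instead establish $z_k^2=1$ directly at the Hopf-algebra level via self-adjointness of $v_k$ together with unitarity of $(u_{ij})$. Your route is slightly more streamlined in that it avoids the intermediate passage through ``$G_X$ classical $\Rightarrow$ permutation action''. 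Your closing caveat about needing real eigenvectors (equivalently, $A_X$ commuting with the conjugation $a\mapsto a^*$) is well taken and applies equally to the paper's argument, which also invokes ``$v$ is a real vector'' without isolating this as a hypothesis.
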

\begin{proof}
  It is enough to argue that $G_X$ is classical, for then the fact that it is a $2$-group follows:
  
  Suppose we know that $G_X$ is classical. Then every $T\in G_X$ operates as a permutation on the classical set $X$ and scales each eigenvector $v$ of $A_X$ (because the spectrum is simple): $Tv=\lambda v$ for some $\lambda\in \mathbb{C}$. Since $T$ permutes the components of $v$ the scalar $\lambda$ must be a root of unity, and the fact that $v$ can be chosen to be a {\it real} vector implies that $\lambda=\pm 1$.

  It thus remains to argue that under the hypotheses $G_X$ is indeed classical. Denote by $\alpha$ the coaction of $G_{X}$. Recall (see \Cref{def:qaut}) that $\alpha \circ A_{X} = (A_{X}\otimes \op{Id})\circ \alpha$. If $v$ is an eigenvector of $A_{X}$, i.e. $A_{X}v=cv$ then we get $c\alpha(v) = (A_{X}\otimes \op{Id})\alpha(v)$. It follows that $\alpha(v)$ is an eigenvector of $(A_{X}\otimes \op{Id})$ with eigenvalue $c$, so 
  \begin{equation}\label{eq:1}
   \alpha(v) = v\otimes g_v,
  \end{equation}
because the spectrum of $A_X$ is simple. Moreover each $g_v$ is group-like, i.e. $\Delta(g_v) = g_v \otimes g_v$. It follows that $\cO(G_X)$ is the group algebra of a finite group $\Gamma$ generated by the $g_v$ (as $v$ ranges over an eigenbasis for $A_X$), so it will be enough to show that every two $g_v$ commute (for then $\Gamma$ is abelian and $G_X=\widehat{\Gamma}$ will be its Pontryagin dual).

  So far only the vector space structure of $\cO(X)$ was important, but to finish the proof we need to recall that it is also a \emph{commutative} algebra. It follows from the formula \Cref{eq:1} for the coaction that $\alpha(vw) = vw \otimes g_v g_w$ and $\alpha(wv) = wv \otimes g_w g_v$ and the two are equal, as $wv=vw$. Therefore we get $vw\otimes g_v g_w = vw \otimes g_w g_v$. The thickness hypothesis shows that $vw\neq 0$ and hence $g_w g_v = g_v g_w$. This finishes the proof. 
\end{proof}

\subsection{Plain graphs}

Here we give an alternative proof of \cite[Theorem 3.14]{MR4097284}, stating that as the size $n$ goes to infinity, the probability that the random graph $G\left(n,\frac 12\right)$ has trivial quantum automorphism group goes to $1$. We also generalize that result slightly in that the random graphs being considered are $G(n,p)$ rather than $G\left(n,\frac 12\right)$.

\begin{theorem}\label{pr.grph-rnd}
  Let $p\in (0,1)$. As $n\to\infty$ the probability that the random graph $G(n,p)$ has trivial quantum automorphism group approaches $1$.
\end{theorem}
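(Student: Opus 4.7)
The plan is to view a sample $G\sim G(n,p)$ as the demi-classical enhanced quantum set $X$ with $\cO(X)=\bC^n$, uniform state, and $A_X$ the usual $0/1$ adjacency matrix acting on $L^2(X)\cong \bC^n$, and then apply \Cref{pr.trv}. That reduces the theorem to establishing two high-probability properties of $A_X$ in the standard basis:
\begin{enumerate}[(i)]
\item $A_X$ has simple spectrum;
\item $A_X$ is thick, i.e.\ any two eigenvectors of $A_X$ share at least one common nonzero coordinate.
\end{enumerate}
Once both hold with probability $\to 1$, \Cref{pr.trv} gives that the quantum automorphism group $G_X$ is classical (an elementary abelian $2$-group). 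A classical quantum automorphism group of an ordinary graph is forced to coincide with the usual automorphism group $\mathrm{Aut}(G)$, so by the classical Erd\H{o}s-R\'{e}nyi theorem \cite{er} stating $\mathrm{Aut}(G(n,p))=\{e\}$ with probability tending to $1$, we conclude.

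For (i), simple spectrum of symmetric random matrices with i.i.d.\ non-degenerate off-diagonal entries is, at this point, a standard fact; in the Bernoulli regime of $A_X-p(J-I)$ for fixed $p\in(0,1)$ it follows from the Tao-Vu simple-spectrum results (together with a rank-one perturbation argument to pass back to $A_X$ itself), and gives simple spectrum with overwhelming probability. For (ii), the natural approach is to prove the a fortiori statement that with high probability \emph{every} unit eigenvector $v$ of $A_X$ has no vanishing coordinate. Using (i), each eigenvector is, up to sign, canonically associated to its eigenvalue; the event $v_j=0$ for a fixed coordinate $j$ can then be rephrased as a linear condition among the other entries of $A_X$ in row $j$, which we control by a Littlewood-Offord / small-ball estimate (conditioning on the minor obtained by deleting row and column $j$ and noting the corresponding cofactors are, generically, non-degenerate). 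A union bound over the $n$ eigenvalues and $n$ coordinates then costs only a polynomial loss.

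The main obstacle is clearly (ii): simple spectrum is essentially off-the-shelf, but ruling out \emph{any} zero entry of \emph{any} eigenvector is more delicate and is the place where the bulk of the technical work sits. One softer alternative, if the full no-zero-entry statement turns out to be awkward, is to exploit that thickness is strictly weaker: we only need pairs of eigenvectors to share some common support, which can potentially be handled by a two-eigenvector comparison combined with delocalization bounds in $\ell^\infty$ for Wigner-type ensembles, without needing global non-vanishing of any single eigenvector.
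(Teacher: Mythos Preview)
Your approach is essentially the same as the paper's: reduce to \Cref{pr.trv} via simple spectrum plus thickness (in fact, via the stronger no-zero-coordinate property of eigenvectors), and then finish with the classical asymmetry of $G(n,p)$. The only differences are cosmetic: the paper cites \cite{tv-smpl} and \cite{ovw} directly for (i) and (ii) rather than sketching the Littlewood--Offord argument, and for the final step it cites \cite{ksv} for general $p\in(0,1)$ (the Erd\H{o}s--R\'enyi paper \cite{er} treats $p=\tfrac12$). One small caution: your detour through $A_X-p(J-I)$ and a rank-one perturbation is unnecessary and a bit dangerous, since rank-one perturbations need not preserve simple spectrum; the Tao--Vu result already applies to the adjacency matrix itself.
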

\begin{proof}
  We know from \cite[Theorem 1.3]{tv-smpl} or \cite[Theorem 1.4]{ovw} that overwhelmingly (see \Cref{def.hp}), the adjacency matrix $A=A_X$ of a random graph has simple spectrum. Furthermore, the second part of \cite[Theorem 1.4]{ovw} also shows that overwhelmingly, no $A$-eigenvectors have any vanishing coordinates.

  This latter condition (all-non-zero coordinates) is stronger than thickness in the sense of \Cref{def.overlap}, and hence we conclude from \Cref{pr.trv} that with overwhelming probability for $X=G(n,p)$, the quantum automorphism group $G_X$ is of the form $(\bZ/2)^d$ and in particular classical.

  We can now finish using the well known fact that fact almost all random graphs $G(n,p)$ have trivial {\it classical} automorphism group (e.g. \cite[Theorem 2]{er} for the case $p=\frac 12$ and \cite[Theorem 3.1]{ksv} in general).
\end{proof}

\subsection{Regular and isospectral graphs}

The generic rigidity result in \Cref{pr.grph-rnd} also applies to random regular graphs $G(n,r)$.

\begin{theorem}\label{pr.reg}
Fix a positive integer $r\ge 3$. As $n\to \infty$ the probability that the random regular graph $G(n,r)$ has trivial quantum automorphism group approaches $1$. 
\end{theorem}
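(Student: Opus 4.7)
The plan is to mimic the argument for \Cref{pr.grph-rnd} almost verbatim, using \Cref{pr.trv} as the main reduction. Concretely, I aim to verify that for $X = G(n,r)$ and $A = A_X$ its adjacency matrix, with probability tending to $1$ as $n \to \infty$ one has (i) $A$ has simple spectrum, (ii) $A$ is thick with respect to the standard vertex basis in the sense of \Cref{def.overlap}, and (iii) the classical automorphism group of $X$ is trivial. Granting (i) and (ii), \Cref{pr.trv} forces the quantum automorphism group $G_X$ to be a classical elementary abelian $2$-group, and then (iii) forces it to be trivial.

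Step (iii) is well documented: for every fixed $r\ge 3$, the random $r$-regular graph on $n$ vertices is asymmetric with probability tending to $1$, by classical results of Bollob\'as and of Kim--Sudakov--Vu. Step (i), simple spectrum at fixed degree, is available through the recent rigidity program for random regular graphs; the $G(n,p)$-style tools cited in the proof of \Cref{pr.grph-rnd} do not apply directly because of the sparsity of $G(n,r)$, but analogous simple-spectrum theorems in the fixed-degree regime are by now standard in the random-matrix literature.

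The crux, and the main obstacle, is step (ii). In the dense $G(n,p)$ setting, thickness was a corollary of a sharp no-vanishing-coordinate delocalization theorem for eigenvectors; the analogous statement in the sparse, fixed-degree regime is considerably more subtle. My approach would be to invoke eigenvector delocalization results for random $r$-regular graphs, developed by Bauerschmidt--Huang--Knowles--Yau and their successors, which yield that with probability tending to $1$ no eigenvector of $A$ has any vanishing coordinate, thereby implying thickness. A weaker fallback that still suffices is pairwise overlap, which is all that \Cref{def.overlap} actually requires: since the top eigenvector (the all-ones vector, eigenvalue $r$) automatically overlaps every other vector, one only needs to show that any two eigenvectors in its orthogonal complement share a common nonzero coordinate, a strictly weaker assertion that is in principle amenable to a direct second-moment argument via the configuration model and the locally tree-like structure of $G(n,r)$ for fixed $r$.
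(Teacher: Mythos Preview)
Your approach is genuinely different from the paper's, and it also carries a real gap.

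The paper bypasses spectral theory entirely. It invokes Bollob\'as's vertex-distinguishing theorem for random regular graphs: with probability tending to $1$, every vertex $x$ of $G(n,r)$ has a distinct distance profile $(d_1(x),d_2(x),\ldots)$, where $d_i(x)$ counts vertices at distance $i$ from $x$. The paper then observes that under any quantum action on a graph, two vertices in the same orbit must share identical distance profiles; hence all quantum orbits are singletons and the quantum automorphism group is trivial. This is a two-line argument requiring no simple-spectrum or delocalization input whatsoever.

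Your route via \Cref{pr.trv} would in principle work, but step (ii) is not established by the literature you cite. The Bauerschmidt--Huang--Knowles--Yau delocalization results for random regular graphs give \emph{upper} bounds of the form $\|v\|_\infty \lesssim n^{-1/2+\varepsilon}$; they say nothing about lower bounds on individual coordinates, and in particular do not rule out vanishing entries. ``No-gaps'' delocalization (lower bounds on each $|v_j|$) is a separate and considerably harder phenomenon, and to my knowledge is not available in the fixed-degree $r$-regular regime. Your fallback second-moment sketch for pairwise overlap is plausible-sounding but not a proof, and the locally-tree-like structure actually cuts the other way: near a short cycle the local environment is atypical, and controlling eigenvector entries at such vertices is exactly where these arguments tend to break. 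Step (i) is also less ``standard'' for bounded degree than you suggest. So while your outline is reasonable in spirit, it rests on ingredients that are either unavailable or would require substantial independent work, whereas the paper's combinatorial argument sidesteps all of this.
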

\begin{proof}
  Let $x$ be a vertex of a finite graph. Following \cite{bol-dist} we denote by $d_i(x)$ the number of vertices at distance $i$ from $x$. According to \cite[Theorem 6]{bol-dist} the probability that the vertices of a random regular graph have distinct sequences
  \begin{equation*}
    d_1(x),\ d_2(x),\ \cdots
  \end{equation*}
  approaches $1$. Since two vertices $x,y$ in the same orbit of a quantum action on the graph will have
  \begin{equation*}
    d_i(x)=d_i(y),\ \forall i\ge 1,
  \end{equation*}
  the conclusion follows.
\end{proof}

\Cref{pr.reg} will allow us to extend \cite[Example 4.18]{MR4091496}, obtaining an abundance of pairs of non-quantum-isomorphic, isospectral graphs with trivial quantum automorphism groups. The construction mimics that in loc. cit., based around the procedure outlined in \cite{gm} for producing isospectral graphs. Recall \cite[Theorem 2.2]{gm}:

\begin{theorem}\label{th.gm}
  Let $\Gamma$ be a regular graph with $2m$ vertices and partition its vertex set as $V=V'\sqcup V''$ with
  \begin{equation*}
    |V'|=|V''|=m.
  \end{equation*}
  The two graphs $\Gamma'$, $\Gamma''$ (with $2m+1$ vertices) obtained by adding a vertex to $V$ and connecting it to $V'$ (respectively $V''$) are isospectral.\qedhere
\end{theorem}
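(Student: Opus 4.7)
The proof is a direct spectral computation using the block structure of the extended adjacency matrices, so my plan is to just carry out that computation cleanly.

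Let $A$ be the adjacency matrix of $\Gamma$, assumed $r$-regular, and let $v',v''\in\{0,1\}^{2m}$ be the indicator vectors of $V'$ and $V''$. Then the adjacency matrices of $\Gamma'$ and $\Gamma''$ are respectively
\[
  A'=\begin{pmatrix} A & v'\\ (v')^T & 0\end{pmatrix},\qquad
  A''=\begin{pmatrix} A & v''\\ (v'')^T & 0\end{pmatrix},
\]
and the two inputs from the hypotheses are the identities $v'+v''=\mathbf{1}$ (the all-ones vector, from $V'\sqcup V''$) and $\mathbf{1}^T(v'-v'')=|V'|-|V''|=0$ (from the equipartition).

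Next, I would use the Schur-complement formula for the characteristic polynomial: for any symmetric $M(\lambda):=(\lambda I-A)^{-1}$ (defined as a rational function of $\lambda$),
\[
  \det(\lambda I-A')\;=\;\det(\lambda I-A)\,\bigl(\lambda - (v')^T M(\lambda)\,v'\bigr),
\]
and similarly for $\Gamma''$. Hence it suffices to prove the scalar identity $(v')^T M(\lambda)\,v'=(v'')^T M(\lambda)\,v''$. Writing $v'=\tfrac12\mathbf{1}+w$ and $v''=\tfrac12\mathbf{1}-w$ with $w=\tfrac12(v'-v'')$, and using symmetry of $M(\lambda)$, the difference telescopes to
\[
  (v')^T M(\lambda)v'-(v'')^T M(\lambda)v''\;=\;2\,\mathbf{1}^T M(\lambda)\,(v'-v'').
\]

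Finally, regularity is what makes this vanish: since $A\mathbf{1}=r\mathbf{1}$, one has $M(\lambda)\mathbf{1}=(\lambda-r)^{-1}\mathbf{1}$, so
\[
  \mathbf{1}^T M(\lambda)(v'-v'')\;=\;\frac{1}{\lambda-r}\,\mathbf{1}^T(v'-v'')\;=\;0
\]
by the equipartition. Thus $\det(\lambda I-A')=\det(\lambda I-A'')$ as rational, and then polynomial, functions of $\lambda$, giving isospectrality. There is no substantive obstacle in this argument; the only thing to check carefully is the Schur-complement identity and that one is free to work with $\lambda$ generic (so that $\lambda I-A$ is invertible) before clearing denominators to get a polynomial identity.
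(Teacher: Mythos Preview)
The paper does not actually prove this statement; it is quoted verbatim as \cite[Theorem 2.2]{gm} and used as a black box in the construction of isospectral pairs. Your Schur-complement argument is correct and is essentially the standard way to see this: the block determinant identity is right, and the key point---that regularity makes $\mathbf{1}$ an eigenvector of $(\lambda I-A)^{-1}$, so the cross term reduces to a multiple of $\mathbf{1}^T(v'-v'')=0$---is exactly what the hypotheses are for. One harmless slip: with $v'=\tfrac12\mathbf{1}+w$ and $v''=\tfrac12\mathbf{1}-w$ the difference of quadratic forms is $2\,\mathbf{1}^T M(\lambda)\,w=\mathbf{1}^T M(\lambda)(v'-v'')$, not twice that; this does not matter since the expression vanishes anyway.
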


The following result proves the existence of the announced examples. 

\begin{proposition}\label{pr.g'g''}
  Fix a positive integer $r\ge 3$ and consider a random regular graph $\Gamma=G(2m,r)$ as $m\to\infty$, with $\Gamma'$ and $\Gamma''$ as in \Cref{th.gm}. 
  \begin{enumerate}[(a)]
  \item\label{item:1}   As $m\to \infty$ the probability that $\Gamma'$ and $\Gamma''$ have trivial quantum automorphism groups approaches $1$.
  \item\label{item:2} The same holds for the probability that $\Gamma'$ and $\Gamma''$ are not quantum isomorphic. 
  \end{enumerate}
\end{proposition}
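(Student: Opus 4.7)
My plan for part (a) is to reduce the triviality of $\operatorname{QAut}(\Gamma')$ (and, by the same argument, $\operatorname{QAut}(\Gamma'')$) to that of the underlying random regular graph $\Gamma=G(2m,r)$, to which \Cref{pr.reg} applies directly. Once $m>r+1$, the vertex $*'$ added to build $\Gamma'$ has degree $m$, strictly larger than every other degree in $\Gamma'$ (all of which are $r$ or $r+1$). Hence $*'$ is the unique vertex of its degree and must be fixed under any quantum action: its column in the universal magic unitary $U$ of $\Gamma'$ coincides with $e_{*'}$ (quantum orbits respect the degree function, and the orbit of $*'$ is forced to be a singleton), and by the row--column duality for magic unitaries so is its row. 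The complementary $2m\times 2m$ block $U'$ is then a magic unitary on $V(\Gamma)$ commuting with $A_\Gamma$, i.e.\ a quantum automorphism of $\Gamma$; by \Cref{pr.reg} it equals the identity with probability approaching $1$, forcing $U$ itself to be trivial. The same argument handles $\Gamma''$.

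For part (b) the plan is to combine part (a) with two further ingredients. First, whenever two finite graphs both have trivial quantum automorphism group in the strong sense ($\mathcal{O}(\operatorname{QAut})\cong\mathbb{C}$), any quantum isomorphism between them must be classical: the universal quantum-isomorphism $C^*$-algebra is a bi-Galois extension over the two quantum automorphism groups, so when both trivialize this bi-torsor collapses to a single point, corresponding to a bona fide permutation matrix intertwining the two adjacency matrices. Second, any classical isomorphism $\Gamma'\cong\Gamma''$ must send the unique maximum-degree vertex of $\Gamma'$ to that of $\Gamma''$, so its restriction to $V(\Gamma)$ is a classical automorphism of $\Gamma$ sending the disjoint nonempty set $V'$ onto $V''$. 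The distance-sequence theorem of \cite{bol-dist}, already invoked in \Cref{pr.reg}, makes $\operatorname{Aut}(\Gamma)$ trivial with probability tending to $1$, so no such interchange can exist generically. Putting these together rules out first a classical, and then (via the first ingredient and part (a)) a quantum isomorphism $\Gamma'\cong_q\Gamma''$ with probability $\to 1$.

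The principal obstacle is making the bi-Galois reduction in the first ingredient precise. It is essentially folklore in the quantum-isomorphism literature and is the technical engine behind \cite[Example 4.18]{MR4091496}, but a fully rigorous derivation requires either invoking the Hopf-algebraic machinery of bi-Galois extensions (after Schauenburg) or arguing directly that the universal quantum-iso $C^*$-algebra of two quantum-rigid graphs must be one-dimensional. Everything else in the plan is either routine bookkeeping with magic unitaries or a direct appeal to \Cref{pr.reg} and \cite{bol-dist}.
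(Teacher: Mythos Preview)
Your argument for part \labelcref{item:1} is correct and matches the paper's: the added vertex is the unique vertex of maximal degree, hence fixed by any quantum action, and the restriction to $\Gamma$ is a quantum automorphism which \Cref{pr.reg} forces to be trivial.

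For part \labelcref{item:2} your argument is also correct, but it is considerably heavier than the paper's. The paper simply remarks that the proof of \labelcref{item:2} is \emph{parallel} to that of \labelcref{item:1}: given a quantum isomorphism $\Gamma'\to\Gamma''$ via a magic unitary $P$, the unique maximum-degree vertex of $\Gamma'$ must be sent to that of $\Gamma''$ (degree is preserved under quantum isomorphism just as under quantum automorphism), so the corresponding row and column of $P$ are standard and the remaining $2m\times 2m$ block $P'$ is a magic unitary intertwining $A_\Gamma$ with itself, i.e.\ a quantum automorphism of $\Gamma$. By \Cref{pr.reg} this forces $P'=\mathrm{id}$ with probability $\to 1$; but then $P$ is the classical bijection fixing $V(\Gamma)$ pointwise and sending $*'\mapsto *''$, which is a graph isomorphism only if $V'=V''$, a contradiction.

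So the paper avoids entirely the bi-Galois/Hopf--Galois detour you flagged as the ``principal obstacle.'' Your route---using part \labelcref{item:1} to trivialize both quantum automorphism groups, then invoking that a bi-Galois object over $\bC$ is one-dimensional to force any quantum isomorphism to be classical, and finally killing the classical isomorphism via $\mathrm{Aut}(\Gamma)=\{1\}$---is valid and is indeed the mechanism underlying \cite[Example 4.18]{MR4091496}, but it imports more machinery than necessary here. The direct restriction argument is both shorter and keeps \labelcref{item:1} and \labelcref{item:2} genuinely parallel.
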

\begin{proof}
  We focus on \labelcref{item:1}, as the proof of \labelcref{item:2} is parallel.

  For $m>r$ $\Gamma'$ has a single vertex of degree $>r$. Since vertices in the same orbit of a quantum action on a graph have equal degrees, any such action on $\Gamma'$ will preserve both the extra vertex and the subgraph $\Gamma\subset \Gamma'$. But by \Cref{pr.reg} the probability that the latter action on $\Gamma$ is trivial approaches $1$ as $m\to\infty$, hence the conclusion.
  
 To prove \labelcref{item:2}, note that a quantum isomorphism between $\Gamma'$ and $\Gamma''$ fixes the vertex added to $\Gamma$, so the magic unitary implementing it has a block form, where the block corresponding to the subgraph $\Gamma$ provides a nontrivial quantum automorphism; by \Cref{pr.reg} this can happen with probability tending to $0$ as $m\to \infty$.
\end{proof}

\section{Generic small automorphism groups}\label{se:gen}
In this section we study automorphism groups of quantum graphs.
\subsection{Conjugation actions on tuples}\label{subse:cjact}

We need a number of remarks on the conjugation action by $GL_n$, $U_n$ and/or their projective versions $PSL_n$, $PSU_n$, etc. on spaces spanned by tuples of matrices. The results have an algebraic-geometric flavor, revolving around the geometric invariant theory of reductive-group actions; we collect those results in the present subsection for future reuse. We assume some familiarity with linear algebraic groups as covered in a number of good sources: \cite{hum,bor}, etc.

First, we fix some conventions.

\begin{convention}\label{cv:perm}
  Throughout,
  \begin{equation}\label{eq:fam}
  \cX:=\{X_i,\ 1\le i\le d\}
\end{equation}
denotes a $d$-tuple of $n\times n$ matrices, with $d$ and $n$ fixed ahead of time and understood from context; we occasionally vary the base symbol `$X$' as in $\cY$ denoting a tuple of $Y_i$, etc. We say that $\cX$ is {\it self-adjoint} if $X_i$ are, i.e. $X_i^*=X_i$.

With that in place,
\begin{equation}\label{eq:spanxi}
  \langle X_i\rangle:=\mathrm{span}\{X_i,\ 1\le i\le d\}
\end{equation}
will be the span of the matrices.

The tuple \Cref{eq:fam} is
\begin{itemize}
\item {\it linearly independent}, or
\item {\it self-adjoint}, or
\item {\it traceless} 
\end{itemize}
if all the $X_i$ are. 

For linearly independent $\cX$ we write
\begin{equation*}
  \Lambda\cX:= X_1\wedge\cdots\wedge X_d\in \Lambda^d M_n
\end{equation*}
for the corresponding element of the exterior power of the space of matrices. The span $\langle \cX\rangle$ can then be identified with an element of the Grassmannian $\mathrm{Gr}(d,M_n)$ of $d$-dimensional subspaces of $M_n$, and that element is identified with the line $\bC \Lambda\cX\in \bP(\Lambda^d M_n)$ (regarded as an element of the projective space of $\Lambda^d M_n$) upon embedding
\begin{equation*}
  \mathrm{Gr}(d,M_n)\subset \bP(\Lambda^d M_n)
\end{equation*}
via the Pl\"ucker embedding \cite[Example 6.6]{har}.
\end{convention}

The subsequent discussion will hinge crucially on the following observation.

\begin{lemma}\label{le:clorb}
  Let $\cX$ be a linearly independent self-adjoint tuple \Cref{eq:fam}. Then, the orbit of $\Lambda \cX\in \Lambda^d M_n$ under the conjugation action by $G:=PSL_n$ is Zariski-closed.
\end{lemma}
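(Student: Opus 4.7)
The plan is to invoke the Kempf-Ness criterion for closedness of orbits in a linear action of a complex reductive group. Equip $M_n$ with the Hilbert-Schmidt inner product $\langle A,B\rangle=\mathrm{Tr}(A^{*}B)$ and extend it canonically to $\Lambda^d M_n$ so that
\[
\langle v_1\wedge\cdots\wedge v_d,\, w_1\wedge\cdots\wedge w_d\rangle=\det(\langle v_i,w_j\rangle).
\]
The maximal compact subgroup $K:=PSU_n\leqslant G:=PSL_n$ acts unitarily, so Kempf-Ness guarantees that the $G$-orbit of $\omega:=\Lambda\cX$ is Zariski-closed as soon as $\omega$ is a critical point of the function $g\mapsto\|g\cdot\omega\|^2$ on $G$. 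Because the $K$-action is isometric, the derivative along any $\xi\in\mathfrak{k}$ vanishes automatically; via the Cartan decomposition $\mathfrak{g}=\mathfrak{k}\oplus i\mathfrak{k}$ the task reduces to showing that $\mathrm{Re}\langle\xi\cdot\omega,\omega\rangle=0$ for every traceless self-adjoint $\xi$.

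I then unpack $\xi\cdot\omega=\sum_{i=1}^d X_1\wedge\cdots\wedge[\xi,X_i]\wedge\cdots\wedge X_d$, take the inner product with $\omega$, and expand each resulting Gram determinant along its $i$-th row. This yields
\[
\langle\xi\cdot\omega,\omega\rangle=\sum_{i,b}C_{ib}\,\langle [\xi,X_i],X_b\rangle,
\]
where $C_{ib}$ is the $(i,b)$-cofactor of the Gram matrix $G_{ab}:=\mathrm{Tr}(X_aX_b)$. Self-adjointness of the $X_a$ makes $G$ real symmetric, whence $C_{ib}=C_{bi}$; self-adjointness of $\xi$ and the $X_a$ together give $[\xi,X_i]^{*}=-[\xi,X_i]$ and therefore
\[
\langle[\xi,X_i],X_b\rangle=-\mathrm{Tr}\bigl([\xi,X_i]\,X_b\bigr)=-\mathrm{Tr}\bigl(\xi\,[X_i,X_b]\bigr)
\]
by the cyclicity of the trace. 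Relabeling the summation indices $i\leftrightarrow b$ in the resulting double sum and using $C_{ib}=C_{bi}$ together with the antisymmetry $[X_b,X_i]=-[X_i,X_b]$ shows the sum equals its own negative, hence vanishes.

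Thus $\omega$ is a critical point of the norm-squared function restricted to $G\cdot\omega$. Via the usual convexity of $\log\|g\cdot\omega\|^2$ along geodesics in the symmetric space $G/K$, every such critical point is automatically a global minimum on the orbit, so Kempf-Ness delivers closedness of $G\cdot\omega$ (the case $\omega=0$, which would force the trivial orbit, is excluded by the linear independence of $\cX$). The main obstacle is purely bookkeeping: arranging the cofactor expansion so that the two symmetries in play---symmetry of the Gram cofactors and antisymmetry of the commutator bracket---can be combined into a single sign-flip argument forcing cancellation.
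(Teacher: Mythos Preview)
Your proof is correct and takes a genuinely different route from the paper. The paper argues abstractly: the conjugation representation of $PSL_n$ on $\Lambda^d M_n$ is defined over $\bR$ (with real points the self-adjoint wedges and compact real form $PSU_n$), and since the $PSU_n$-orbit of the real point $\Lambda\cX$ is compact, a result of Birkes on orbits in representations defined over $\bR$ immediately gives Zariski-closure of the complex orbit. No computation is done.

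You instead invoke the Kempf--Ness criterion and verify by hand that $\Lambda\cX$ is itself a minimal vector, via the moment-map condition $\mathrm{Re}\langle\xi\cdot\omega,\omega\rangle=0$ for $\xi$ Hermitian traceless. Your cofactor/commutator bookkeeping is correct: the Gram matrix of self-adjoint $X_a$ is real symmetric, hence its cofactors are symmetric in $(i,b)$, while $\mathrm{Tr}(\xi[X_i,X_b])$ is antisymmetric in $(i,b)$; the double sum thus vanishes identically (not just its real part). The two arguments are cousins---both exploit the compact real form---but yours is more explicit and self-contained modulo Kempf--Ness, whereas the paper's is a one-line citation to a somewhat less widely known theorem. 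Either suffices, and your computation has the minor bonus of exhibiting $\Lambda\cX$ concretely as a zero of the moment map rather than inferring closedness from the real-structure machinery.
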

\begin{proof}
  The complex algebraic variety $G$ has a {\it real structure} \cite[AG \S 11]{bor} whose underlying real points make up precisely the maximal compact subgroup $G_{\bR}:=PSU_n$ of $G$. Note moreover that conjugation by unitary operators preserves the real vector space
  \begin{equation*}
    M_{n,sa}:=\{\text{self-adjoint matrices}\}
  \end{equation*}
  which in turn is a real structure for $M_n$ in the sense that
  \begin{equation*}
    M_n\cong M_{n,sa}\otimes_{\bR}\bC. 
  \end{equation*}
  It follows that $\Lambda^d M_n$ is a representation of $G$ defined over $\bR$. Since the orbit of $\Lambda\cX$ is closed (in the standard topology) under the compact group $G_{\bR}$, the conclusion follows from \cite[Corollary 5.3]{birk}.
\end{proof}

In particular, we also have

\begin{corollary}\label{le:fixisred}
  Let $\cX$ be a linearly independent self-adjoint tuple \Cref{eq:fam}. Then, the stabilizer group $G_{\Lambda \cX}$ of $\Lambda\cX\in \Lambda^d M_n$ under the conjugation action by $G:=PSL_n$ is reductive.
\end{corollary}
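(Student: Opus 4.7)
The plan is to deduce this directly from \Cref{le:clorb} via a standard geometric invariant theory result: stabilizers of points with closed orbits (under reductive group actions on affine varieties) are themselves reductive. This is often attributed to Matsushima (for the complex analytic case) and Richardson/Luna in the algebraic setting; the statement commonly appears as: if $G$ is a reductive algebraic group acting on an affine variety $Y$ and $y \in Y$ has Zariski-closed orbit, then the stabilizer $G_y$ is reductive. (Equivalently, via Matsushima's criterion, a closed subgroup $H \le G$ of a reductive group is reductive iff $G/H$ is affine.)

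Concretely, I would proceed as follows. First, note that $G = PSL_n$ is reductive (in fact semisimple), and that $\Lambda^d M_n$ is an affine variety, being a finite-dimensional complex vector space. By \Cref{le:clorb}, the orbit $G \cdot \Lambda \cX \subset \Lambda^d M_n$ is Zariski-closed, hence is itself an affine variety. Since orbits are always smooth and the orbit map induces an isomorphism of varieties
\begin{equation*}
  G/G_{\Lambda \cX} \xrightarrow{\ \cong\ } G \cdot \Lambda \cX,
\end{equation*}
the homogeneous space $G/G_{\Lambda \cX}$ is affine. Matsushima's criterion (e.g.\ as recorded in Richardson, \emph{Affine coset spaces of reductive algebraic groups}) then forces the closed subgroup $G_{\Lambda \cX} \le G$ to be reductive.

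The only real content beyond citing Matsushima is verifying the hypotheses, all of which have already been arranged: reductivity of $G$, affineness of the ambient representation, and closedness of the orbit. The last of these is precisely the substance of \Cref{le:clorb}, so no new geometric work is needed here. The main ``obstacle'' is merely picking the cleanest reference form of Matsushima's theorem; there is no computational step to carry out.
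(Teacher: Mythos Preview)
Your proposal is correct and follows essentially the same route as the paper: invoke \Cref{le:clorb} for the closedness of the orbit, then apply Matsushima's criterion (the paper cites Luna's account, you cite Richardson's, but it is the same result). There is nothing to add.
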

\begin{proof}
  Indeed, by \cite[\S I.2, Proposition]{luna} (attributed there to Matsushima \cite{mats}) this is a consequence of the closure of the orbit
  \begin{equation*}
    G(\Lambda \cX)\subset \Lambda^d M_n,
  \end{equation*}
  which in turn is asserted by \Cref{le:clorb}.
\end{proof}

\begin{theorem}\label{th:gencj}
  Let $\cX$ be a linearly independent self-adjoint tuple \Cref{eq:fam} and set $G:=PSL_n$ and $K:=PSU_n$. Then:
  \begin{enumerate}[(a)]
  \item\label{item:3} For $\Lambda\cY\in \Lambda^d M_n$ in a Zariski-open neighborhood of $\Lambda\cX$ the isotropy group $G_{\Lambda\cY}$ is conjugate to a subgroup of $G_{\Lambda\cX}$.
  \item\label{item:4} The same goes for the isotropy groups $K_{\bullet}$ with $\bullet\in\{\Lambda\cY,\Lambda\cX\}$ and self-adjoint $\cY$.
  \item\label{item:5} Finally, the analogous result holds for the isotropy groups $K_{\bullet}$, $\bullet\in\{\langle\cY\rangle,\langle\cX\rangle\}$ and self-adjoint $\cY$ again, this time the action being that of $K$ on the Grassmannian $\mathrm{Gr}(d,M_n)$.
  \end{enumerate}
\end{theorem}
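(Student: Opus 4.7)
The plan is to dispatch \labelcref{item:3} via Luna's \'etale slice theorem and then deduce \labelcref{item:4}, \labelcref{item:5} from the differential-geometric slice theorem for compact group actions.

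For \labelcref{item:3}, the hypotheses of Luna's theorem hold at $\Lambda\cX$: by \Cref{le:clorb} the orbit $G\cdot\Lambda\cX\subset\Lambda^d M_n$ is Zariski-closed, and by \Cref{le:fixisred} the stabilizer $G_{\Lambda\cX}$ is reductive. Luna's theorem then produces a $G_{\Lambda\cX}$-stable locally closed affine slice $S\subset\Lambda^d M_n$ through $\Lambda\cX$ such that the multiplication map $G\times^{G_{\Lambda\cX}}S\to\Lambda^d M_n$ is \'etale with $G$-saturated Zariski-open image $U$ containing the orbit. A standard consequence is that the isotropy group of any $s\in S$ is a subgroup of $G_{\Lambda\cX}$, so for every $\Lambda\cY\in U$ the stabilizer $G_{\Lambda\cY}$ is $G$-conjugate to a subgroup of $G_{\Lambda\cX}$, as required.

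For \labelcref{item:4} and \labelcref{item:5}, the ambient object is respectively the real vector space $\Lambda^d M_{n,\bR}$ (in which $\Lambda\cY$ sits for self-adjoint $\cY$, cf.\ the proof of \Cref{le:clorb}) and the real-analytic manifold $\mathrm{Gr}(d,M_n)$, each carrying a smooth action of the compact Lie group $K=PSU_n$. Because $K$ is compact every orbit is automatically closed, so no analogue of \Cref{le:clorb} is required. Applying the slice theorem for smooth compact-group actions (the Koszul-Mostow-Palais theorem) yields a $K$-invariant tubular neighborhood of the orbit, equivariantly diffeomorphic to $K\times^{K_\bullet}V$ for a suitable orthogonal $K_\bullet$-representation $V$, with $\bullet\in\{\Lambda\cX,\langle\cX\rangle\}$. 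Every point in such a tube has $K$-isotropy conjugate in $K$ to a subgroup of $K_\bullet$, yielding the desired conclusion in both parts.

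The main technical obstacle is invoking Luna's theorem precisely for \labelcref{item:3}: one must verify that the \'etale image $U$ is genuinely a Zariski-open neighborhood of $\Lambda\cX$ inside $\Lambda^d M_n$ (and not merely of the orbit after passing to an \'etale cover), and extract the literal containment $G_s\subseteq G_{\Lambda\cX}$ for $s\in S$ from the structure of the slice. The parallel bookkeeping for \labelcref{item:4} and \labelcref{item:5} is routine thanks to compactness of $K$, which makes closedness of orbits and the existence of an orthogonal slice automatic.
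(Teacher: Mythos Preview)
Your approach to \labelcref{item:3} via Luna's slice theorem matches the paper's. For \labelcref{item:4} and \labelcref{item:5}, however, the Koszul--Mostow--Palais slice theorem only produces an open neighborhood in the \emph{standard} topology, whereas the statement asks for a \emph{Zariski}-open one: this is what ``the same goes'' inherits from \labelcref{item:3}, and Zariski-openness is essential for the downstream applications \Cref{cor:princorb} and \Cref{cor:genfix}, which pick Zariski-open dense sets on which the isotropy is minimal. The compact slice theorem, being purely differential-topological, has no reason to produce Zariski-open tubes, so as written your argument for \labelcref{item:4} and \labelcref{item:5} yields a strictly weaker conclusion than stated.

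The paper avoids this by deducing \labelcref{item:4} and \labelcref{item:5} from \labelcref{item:3} rather than by an independent slice argument. For \labelcref{item:4} it observes that for self-adjoint tuples the stabilizer $G_{\Lambda\bullet}$ is invariant under the Cartan involution $\Theta(x)=(x^*)^{-1}$, and $K_{\Lambda\bullet}$ is its fixed-point set; via polar decomposition (Mostow) this makes $K_{\Lambda\bullet}$ maximal compact in the reductive $G_{\Lambda\bullet}$, so containments among the $K$-isotropies are equivalent to containments among the $G$-isotropies. The Zariski-open neighborhood from \labelcref{item:3} therefore serves for \labelcref{item:4} as well. For \labelcref{item:5} the paper notes that $K$ acts on the real line $\bR\Lambda\cY$ by $\pm 1$, so fixing the span $\langle\cY\rangle$ is equivalent to fixing $\Lambda\cY\otimes\Lambda\cY\in(\Lambda^d M_n)^{\otimes 2}$, and then reruns the argument of \labelcref{item:4} in the tensor square. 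Your compact-slice route would be a clean shortcut if only standard-topology openness were needed, but as stated the theorem requires the algebraic version, and that requires threading the conclusion back through \labelcref{item:3}.
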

\begin{proof}
  We prove the three claims separately.

  {\bf \Cref{item:3}:} As observed in \cite[Remark (b) following Proposition 3.3]{rich-zero}, this follows from
  \begin{itemize}
  \item the Zariski-closure of the orbit $G(\Lambda \cX)$ (provided by \Cref{le:clorb}) and 
  \item Luna's \'etale slice theorem \cite[\S III.1]{luna}. 
  \end{itemize}

  {\bf \Cref{item:4}:} For self-adjoint $\cY$ the isotropy group $G_{\Lambda\cY}$ is invariant under the complex-conjugate-linear involution $\Theta:x\mapsto (x^*)^{-1}$ and
  \begin{equation}\label{eq:kbul*}
    K_{\bullet} = \{g\in G_{\bullet}\ |\ \Theta(x)=x\}. 
  \end{equation}
  This then implies
  \begin{equation}\label{eq:gtok}
    G_{\Lambda\cY}\subseteq G_{\Lambda\cX} \Rightarrow K_{\Lambda\cY}\subseteq K_{\Lambda\cX}. 
  \end{equation}

  Now note furthermore that the inner product
  \begin{equation*}
    \langle A,B\rangle:=\mathrm{tr}(A^*B),\ A,B\in M_n
  \end{equation*}
  induces one on $\Lambda^d M_n$ making the latter into a Hilbert space in such a manner that the conjugation action by $G$ respects the $*$-structures: if we denote that representation by $\rho:G\to GL(\Lambda^d M_n)$  then
  \begin{equation*}
    \rho(X^*) = \rho(X)^*\in GL(\Lambda^d M_n). 
  \end{equation*}
  Coupled with \Cref{eq:kbul*} this means that once we identify $G_{\bullet}$ with a subgroup of $GL(\Lambda^d M_n)$, $K_{\bullet}$ is its intersection with the unitary group of $U(\Lambda^d M_n)$. $G_{\bullet}$ thus has a polar decomposition with respect to $K_{\bullet}$ respectively (e.g. \cite[the result labeled `Mostow's Theorem', p.7]{brgr}) and the latter are maximal compact subgroups of the (reductive, by \Cref{le:fixisred}) groups $G_{\bullet}$. It follows that
  \begin{equation*}
    K_{\Lambda\cY}\subseteq K_{\Lambda\cX} \Rightarrow G_{\Lambda\cY}\subseteq G_{\Lambda\cX}. 
  \end{equation*}
  by taking Zariski closures, supplementing \Cref{eq:kbul*} and finishing the proof of \Cref{item:4}. 
  
  {\bf \Cref{item:5}:} Preserving the span $\langle \cY\rangle\in \mathrm{Gr}(r,M_n)$ means preserving the {\it line} through $\Lambda \cY\in \Lambda^d M_n$. The group $K$ acts by unitary transformations on the {\it real} Hilbert space
  \begin{equation*}
    \Lambda^d_{\bR} M_{n,sa} \subset \Lambda^d M_{n}\cong \Lambda^d_{\bR} M_{n,sa}\otimes_{\bR} \bC,
  \end{equation*}
  where $M_{n,sa}$ denotes self-adjoint matrices, as in the proof of \Cref{le:clorb}, and `$\Lambda^d_{\bR}$' denotes exterior powers of real vector spaces over $\bR$. It follows that under the $K$-action preserving the line means acting on it as $\pm 1$. But then, for elements of $K$, fixing $\langle \cY\rangle$ is equivalent to fixing
  \begin{equation*}
    \Lambda \cY\otimes \Lambda\cY\in (\Lambda^d M_n)^{\otimes 2}.
  \end{equation*}
  The same argument used in the proof of \Cref{item:4} now applies with $\Lambda\cX^{\otimes 2}$ and $(\Lambda^d M_n)^{\otimes 2}$ in place of $\Lambda\cX$ and $\Lambda^d M_n$ respectively (and similarly for $\cY$).
\end{proof}

Note also the following consequence regarding principal orbits for the actions of $PSL_n$ and $PSU_n$ on the self-adjoint Grassmannians. Recall (e.g. \cite[Introduction]{rich-zero} or \cite[discussion following Proposition I.2.5]{aud}) that a {\it principal orbit} for an action is an orbit $Gx$ with the property that the isotropy groups $G_y$ are conjugate to $G_x$ throughout a dense open set $U$ of points $y$.

In the present context we are interested in having $U$ dense open in the {\it Zariski} (rather than usual) topology, hence the phrase {\it Zariski-principal} in the following statement. 

\begin{corollary}\label{cor:princorb}
  The actions of
  \begin{itemize}
  \item $G:=PSL_n$ on linearly-independent, self-adjoint $\Lambda\cX$;
  \item $K:=PSU_n$ on linearly-independent, self-adjoint $\Lambda\cX$;
  \item $K$ on spans $\langle \cX\rangle$ for linearly-independent, self-adjoint $\cX$
  \end{itemize}
  all have Zariski-principal orbits for arbitrary $d$ and $n$.
\end{corollary}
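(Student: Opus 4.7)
The approach is a two-step isotropy minimization built on repeated use of \Cref{th:gencj}. In each of the three settings the relevant stabilizer is a closed algebraic subgroup of either $G$ or $K$, hence possesses a well-defined dimension and finite component count. We plan to first minimize the dimension and then, within the open locus thus produced, minimize the number of connected components of the stabilizer, using \Cref{th:gencj} at each stage to exhibit a Zariski-open neighborhood on which the minimization is witnessed.

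We focus on part (a); the remaining two cases amount to cosmetic modifications. Let $d_0$ denote the minimum of $\dim G_{\Lambda\cX}$ as $\cX$ ranges over linearly independent self-adjoint tuples, and fix such an $\cX_0$ achieving $d_0$. Applying \Cref{th:gencj} to $\cX_0$ yields a Zariski-open $U_1\subseteq \Lambda^d M_n$ containing $\Lambda\cX_0$ on which every $G_{\Lambda\cY}$ is $G$-conjugate to a subgroup of $G_{\Lambda\cX_0}$. Combined with the minimality of $d_0$, this conjugate subgroup has dimension $d_0$ for every self-adjoint $\cY$ with $\Lambda\cY\in U_1$, and hence must contain the identity component $G_{\Lambda\cX_0}^{0}$ as a finite-index subgroup.

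For the component step, we pick $\cX_1$ with $\Lambda\cX_1\in U_1$ minimizing the number $N$ of connected components of $G_{\Lambda\cX_1}$; this is possible since that count is bounded above by the component count of $G_{\Lambda\cX_0}$. A second application of \Cref{th:gencj} at $\cX_1$ produces a Zariski-open $V$ around $\Lambda\cX_1$ on which $G_{\Lambda\cY}$ is conjugate to a subgroup $H'\le G_{\Lambda\cX_1}$. On the Zariski-open intersection $U_1\cap V$, the dimension equality from the first step forces $H'\supseteq G_{\Lambda\cX_1}^0$, and the minimality of $N$ forces $|H'/G_{\Lambda\cX_1}^0|\ge N$, whence $H'=G_{\Lambda\cX_1}$. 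The isotropy throughout $U_1\cap V$ is therefore constantly $G$-conjugate to $G_{\Lambda\cX_1}$, and the orbit through $\Lambda\cX_1$ is Zariski-principal.

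Parts (b) and (c) proceed identically upon replacing part (a) of \Cref{th:gencj} with parts (b) or (c), and working inside the real algebraic variety of self-adjoint tuples or its image in the Grassmannian. The only delicate point in each argument is the secondary minimization: that the set where the component count attains its minimum contains a Zariski-open neighborhood is not evident a priori, and is supplied precisely by the fresh application of \Cref{th:gencj} at the component-minimizer $\cX_1$.
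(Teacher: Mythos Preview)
Your proof is correct and follows essentially the same approach as the paper's: a two-step minimization (first dimension, then component count) of the isotropy group, using \Cref{th:gencj} to produce the requisite Zariski-open neighborhoods and irreducibility of the underlying variety for density. The paper compresses this into a single sentence, whereas you spell out the two applications of \Cref{th:gencj} and the group-theoretic reason why equal dimension plus minimal component count forces $H'=G_{\Lambda\cX_1}$; both arguments are the same in substance.
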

\begin{proof}
  This is immediate from \Cref{th:gencj}: in each case pick a Zariski-open non-empty (and hence dense, by the irreducibility of the varieties involved) set where the isotropy group has minimal dimension and the smallest number of components in the usual topology.
\end{proof}

\begin{remark}
  Actions of compact Lie groups on connected manifolds always have principal orbits in the usual topology ( as proved for instance in \cite[Proposition I.2.5]{aud} and noted in \cite[Introduction]{rich-zero}); in \Cref{cor:princorb}, however, we are interested in the Zariski topology instead.
\end{remark}

In light of \Cref{cor:princorb} the following piece of notation and terminology makes sense.

\begin{notation}\label{not:genk}
  For positive integers $d$ and $n$ we write $\cat{gen} K_{\Lambda\cX}(d,n)$ of $\cat{gen} K_{\langle\cX\rangle}(d,n)$ for the isotropy group in $K:=PSU_n$ of a generic $\Lambda\cX$ or, respectively, $\langle \cX\rangle$ for self-adjoint, traceless, linearly-independent $d$-tuples $\cX\subset M_n$. Here, `generic' means ranging over an open dense subspace of the respective variety (Grassmannian in one case, space of tuples in the other).

  This is not quite a subgroup of $K$ but rather a conjugacy class therein; we abuse notation slightly by speaking of $\cat{gen} K_{\cdots}$ as groups. We also suppress $d$ and/or $n$ from the notation when convenient, as in $\cat{gen} K$ or $\cat{gen} K(d)$.
\end{notation}

\begin{remark}\label{re:gentrivsign}
  The triviality of $\cat{gen} K_{\Lambda\cX}(d,n)$ says that the $d$-tuples $\langle \cX\rangle$ representing quantum graphs with trivial automorphism groups form a ``large'' space: the complement of that space is a less-than-full-dimensional variety, and in particular that complement has measure zero; hence the relevance of (the triviality of) $\cat{gen} K_{\Lambda\cX}(d,n)$ to \Cref{th:new5}. 
\end{remark}

In the following statement a group-theoretic property $\cP$ (e.g. being finite, trivial, abelian, metabelian, solvable, nilpotent, etc. etc.) is
\begin{itemize}
\item {\it hereditary} if property $\cP$ for a group $\Gamma$ entails the property for all subgroups of $\Gamma$;
\item {\it algebraic} if property $\cP$ for a subgroup of a linear algebraic group entails it for its Zariski closure.
\end{itemize}

\begin{corollary}\label{cor:genfix}
  Let $G:=PSL_n$ and $K:=PSU_n$ be as in \Cref{th:gencj} and $\cP$ a hereditary group-theoretic property for subgroups of a complex linear algebraic group.
  \begin{enumerate}[(1)]
  \item\label{item:6} If the isotropy group $K_{\Lambda\cX}$ has property $\cP$ for at least one linearly independent self-adjoint tuple $\cX$ as in \Cref{eq:fam}, then this is the case for a Zariski-open dense set of tuples $\cX$.
  \item\label{item:7} The analogous statement holds for isotropy groups $K_{\langle\cX\rangle}$ of spans. 
  \item\label{item:8} If furthermore $\cP$ is algebraic then under the hypothesis of \Cref{item:6} its conclusion holds for algebraic isotropy groups $G_{\Lambda\cX}$.
  \end{enumerate}
\end{corollary}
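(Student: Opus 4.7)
The plan is to combine the existence of Zariski-principal orbits (\Cref{cor:princorb}) with the upper semicontinuity of isotropy groups near a fixed tuple (\Cref{th:gencj}). Each of the three assertions follows the same template: identify a ``generic'' isotropy group $H$ valid throughout a Zariski-open dense set $U$; use \Cref{th:gencj} near the hypothesized tuple $\cX_0$ to realize $H$ as a conjugate of a subgroup of the isotropy at $\cX_0$; and invoke hereditarity of $\cP$ to transport the property from the isotropy at $\cX_0$ back to $H$, and hence to every point of $U$.

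For \Cref{item:6}: assume $K_{\Lambda\cX_0}$ has property $\cP$. By \Cref{cor:princorb} there exists a Zariski-open dense $U$ on which every isotropy $K_{\Lambda\cY}$ is conjugate to a fixed group $H = \cat{gen}K_{\Lambda\cX}(d,n)$. By \Cref{th:gencj}\labelcref{item:4} there is also a Zariski-open neighborhood $N$ of $\cX_0$ with $K_{\Lambda\cY}$ conjugate to a subgroup of $K_{\Lambda\cX_0}$ for every $\cY\in N$. The intersection $U\cap N$ is nonempty (by Zariski density of $U$), so any $\cY$ in it exhibits $H$ as (conjugate to) a subgroup of $K_{\Lambda\cX_0}$. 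Hereditarity forces $\cP$ on $H$, and hence on every $K_{\Lambda\cY}$ with $\cY\in U$. \Cref{item:7} is formally identical, using \Cref{th:gencj}\labelcref{item:5} and the span version of \Cref{cor:princorb} in place of their $\Lambda^d M_n$ counterparts.

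For \Cref{item:8} the only novelty is a preliminary upgrade from $K_{\Lambda\cX_0}$ to $G_{\Lambda\cX_0}$. The group $G_{\Lambda\cX_0}$ is reductive by \Cref{le:fixisred}, and the polar-decomposition discussion in the proof of \Cref{th:gencj}\labelcref{item:4} exhibits $K_{\Lambda\cX_0}$ as a maximal compact subgroup of it; since a maximal compact subgroup of a complex reductive algebraic group is Zariski-dense in it, $G_{\Lambda\cX_0}$ is the Zariski closure of $K_{\Lambda\cX_0}$. Algebraicity of $\cP$ then passes $\cP$ from $K_{\Lambda\cX_0}$ up to $G_{\Lambda\cX_0}$, after which the principal-orbit/semicontinuity argument for $G$, via \Cref{th:gencj}\labelcref{item:3}, delivers the conclusion.

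The one step that goes beyond mechanical assembly is the Zariski density of a maximal compact subgroup in its reductive complexification used in \Cref{item:8}; this is, however, implicitly invoked already in the proof of \Cref{th:gencj}\labelcref{item:4}, so I do not anticipate a genuine obstacle. Everything else is a straightforward combination of results already laid out in \Cref{subse:cjact}.
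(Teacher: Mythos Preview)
Your argument is correct, but it takes a slightly longer detour than the paper's. For \Cref{item:6} and \Cref{item:7} the paper simply applies \Cref{th:gencj}\labelcref{item:4} (resp.\ \labelcref{item:5}) at the single tuple $\cX_0$: the Zariski-open neighborhood $N$ on which every $K_{\Lambda\cY}$ is conjugate to a subgroup of $K_{\Lambda\cX_0}$ is already non-empty and hence dense by irreducibility, so hereditarity of $\cP$ finishes immediately. Your route through \Cref{cor:princorb} to pin down a generic isotropy group $H$ first is valid but unnecessary. For \Cref{item:8} the paper again proceeds more directly: rather than upgrading only at $\cX_0$ and rerunning the semicontinuity argument for $G$, it notes that $K_{\Lambda\cY}$ is Zariski-dense in $G_{\Lambda\cY}$ for \emph{every} self-adjoint $\cY$ (the maximal-compact observation holds pointwise), so once \Cref{item:6} gives $\cP$ for $K_{\Lambda\cY}$ on a dense open set, algebraicity transfers it to $G_{\Lambda\cY}$ on that same set. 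Your approach works, but the paper's avoids invoking \Cref{th:gencj}\labelcref{item:3} and \Cref{cor:princorb} altogether.
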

\begin{proof}
  \Cref{item:6,item:7} follow immediately from parts \Cref{item:4,item:5} of \Cref{th:gencj}, noting that all varieties involved are irreducible and hence Zariski-density follows from openness and non-emptiness. As for \Cref{item:8}, it follows from the remark made in the course of the proof of \Cref{th:gencj} that $K_{\Lambda\cX}\subset G_{\Lambda\cX}$ is maximal compact (and hence Zariski-dense, since $G_{\Lambda\cX}$ is reductive by \Cref{le:fixisred}).
\end{proof}

\subsection{Small isotropy groups}\label{subse:sm}

We will be interested in proving results to the effect that the isotropy unitary group $PSU_{n,\langle\cX\rangle}$ is small, generically in the self-adjoint tuple $\cX$. We will make repeated, tacit use of \Cref{cor:genfix}, often reducing this to proving triviality for a {\it single} tuple $\cX$. A number of other simplifying assumptions will frequently be in place:

\begin{itemize}
\item since $1\in M_n$ is of course fixed under conjugation, we may as well focus on {\it traceless} $\cX$ (see \Cref{cv:perm}); 
\item since $d$-tuples of traceless $n\times n$ matrices generically have maximal dimension
  \begin{equation*}
    \min(d,n^2-1),
  \end{equation*}
  we assume $d\le n^2-1$ and focus on linearly independent tuples.
\item A single self-adjoint matrix will of course have a positive-dimensional commutant in $M_n$, so for $PSU_{n,\langle\cX\rangle}$ to ever be trivial we need $2\le d\le n^2-1$. Additionally, \Cref{lem:ortho} below also requires $(n^2-1)-d\ge 2$ in order to have trivial isotropy groups. In short, we need 
  \begin{equation*}
    2\le d\le n^2-3,
  \end{equation*}
  so we focus on $n\ge 3$ to ensure this range is non-empty.
\end{itemize}

We would like to mention here that the case of a single self-adjoint matrix is the only one in which the commutant will be non-trivial. To wit, two generic self-adjoint matrices generate the full matrix algebra; this fact is well-known but we could not locate a precise reference, so we add a proof.

\begin{lemma}\label{lem:twogenerate}
  The set of pairs $(A,B)$ of $n\times n$ Hermitian matrices such that the $*$-algebra by generated by $A$ and $B$ is not equal to $M_n$ is contained in a proper Zariski-closed subset of $M_{n,sa}^2$, and hence in particular has measure zero.
\end{lemma}
\begin{proof}
  The condition that $A,B\in M_{n,sa}$ generate $M_n$ as a complex $*$-algebra is equivalent to requiring that the commutators of $A$ and $B$ intersect only along the scalars, i.e. that the intersection of those commutators be minimal-dimensional.
  
  Denote by $M_{n,ssa}\subset M_{n,sa}$ the subset of self-adjoint matrices with simple spectrum. $M_{n,ssa}^2\subset M_{n,sa}^2$ is Zariski-dense, so we may as well work with simple-spectrum $A$ and $B$. in that case, the commutators are precisely the algebras $\langle A\rangle$ and $\langle B\rangle$, with respective bases $\{A^i\}_{i=0}^{n-1}$ and similarly for $B$.

  Now, the condition that
  \begin{equation*}
    \mathrm{span}\{A^i,\ 0\le i\le n-1\}\quad\text{and}\quad \mathrm{span}\{B^i,\ 0\le i\le n-1\}
  \end{equation*}
  intersect only along $\bC 1$ is Zariski-open, hence the conclusion.
\end{proof}

\begin{lemma}\label{lem:ortho}
  If $\cX$ is a linearly independent, self-adjoint traceless $d$-tuple then
  \begin{equation*}
    PSU_{n,\langle\cX\rangle} = PSU_{n,\langle\cX\rangle^{\perp}},
  \end{equation*}
  where `$\perp$' indicates the orthogonal complement with respect to the trace inner product
  \begin{equation*}
    \braket{A|B} = \mathrm{tr}(A^*B).
  \end{equation*}
\end{lemma}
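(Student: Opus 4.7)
The key observation is that the conjugation action of $U_n$ (and hence of $PSU_n$) on $M_n$ is an action by \emph{orthogonal} transformations with respect to the Hilbert--Schmidt/trace inner product $\langle A, B\rangle = \mathrm{tr}(A^*B)$. This is a one-line check from the cyclic invariance of the trace:
\begin{equation*}
  \langle UAU^*, UBU^*\rangle = \mathrm{tr}(UA^*U^*\cdot UBU^*) = \mathrm{tr}(A^*B) = \langle A, B\rangle,\qquad U\in U_n,\ A,B\in M_n.
\end{equation*}

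The plan is then to combine this with the elementary fact that orthogonal maps on a Hilbert space send orthogonal complements to orthogonal complements. Concretely, for any subspace $V \subset M_n$ and any $U\in U_n$, the implication $UVU^*=V \Rightarrow UV^\perp U^* = V^\perp$ is immediate, and the converse follows by applying the same implication to $V^\perp$ and invoking $(V^\perp)^\perp = V$. Hence a unitary stabilizes $V$ under conjugation if and only if it stabilizes $V^\perp$.

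Specializing to $V = \langle \cX \rangle$ therefore yields the equality of stabilizers in $U_n$, and this descends to the equality $PSU_{n,\langle\cX\rangle} = PSU_{n,\langle\cX\rangle^\perp}$ in the projective unitary quotient. The hypotheses that $\cX$ be linearly independent, self-adjoint, and traceless are not actually used in this argument; they are present because they describe the regime in which the lemma will later be applied (with $\langle\cX\rangle^\perp$ viewed inside the traceless Hermitian matrices so that the duality $d \leftrightarrow (n^2-1)-d$ is meaningful). I do not anticipate any real obstacle, as the content is simply the formal observation that stabilizers of orthogonal subspaces under an orthogonal group action coincide.
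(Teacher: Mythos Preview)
Your proposal is correct and is essentially the same as the paper's own proof, which simply notes that the conjugation action by $U_n$ is unitary with respect to the trace inner product. You have merely spelled out the one-line argument in more detail; the approach is identical.
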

\begin{proof}
  This is immediate: the conjugation action by $U_n$ is unitary with respect to the trace inner product in the statement.
\end{proof}

It will be convenient in the sequel to record the following simple observation, for future reference.

\begin{lemma}\label{le:auxrep}
  Let $G$ be a compact Lie group and $V$ a complex $G$-representation. Fix a subspace $W\subseteq V$, and denote, in general, by $K_U\subseteq G$ the closed subgroup leaving a subspace $U\subseteq V$ invariant.

  If $K_W$ acts trivially on $W$ then the same holds for $W'$ ranging over a neighborhood of $W$ in the Grassmannian $\mathrm{Gr}:=\mathrm{Gr}(\dim W,V)$.
\end{lemma}
\begin{proof}
  We will prove the contrapositive. Considering a convergent sequence $W_n\to W$ in $\mathrm{Gr}$, and suppose each $K_{W_n}$ acts {\it non}-trivially on the respective $W_n$. This means, in particular, that some $g_n\in K_{W_n}$ has an eigenvalue $\lambda_n\ne 1$ on $W_n$, and raising $g_n$ to some power if necessary we can assume that all $\lambda_n$ lie outside a fixed open neighborhood $U$ of $1\in \bS^1$.

  But then, because
  \begin{itemize}
  \item $G$ is compact, 
  \item the condition that an element of $G$ leave $W'\in\mathrm{Gr}$ invariant is closed in $G\times\mathrm{Gr}$ and
  \item $\bS^1\setminus U$ is closed,
  \end{itemize}
  we can find some subsequence of $x_n$ converging to $x\in K_W$ with an eigenvalue $\lambda\in \bS^1\setminus U$. on $W$. This contradicts the triviality of the action of $K_W$ on $W$, finishing the proof.
\end{proof}

Next, a number of inductive results. 

\begin{proposition}\label{pr:indstp}
  Fix $n\ge 3$ and assume that for any $d\in \{2,3,\dots,n^2-3\}$ the conjugacy class $\cat{gen} K_{\langle \cX\rangle}(d,n)$ is trivial. Then $\cat{gen} K_{\langle \cX\rangle}(d',n+1)$ is trivial for $2\le d'\le n^2-2$. 
  
  The same holds for abelian $\cat{gen} K$ or for the requirement that it have order $\le 2$.
\end{proposition}
\begin{proof}
  We focus on the first statement, the second one being entirely analogous. Consider two cases:

  {\bf ($2\le d\le n^2-3$)} Consider a generic self-adjoint linearly-independent $d$-tuple $(X_i)_{i=1}^d$ of $(n+1)\times (n+1)$ matrices all of whose non-zero entries are concentrated in the upper left-hand $n\times n$ corner. The non-unital subalgebra of $M_{n+1}$ it generates is the upper-left-hand corner $M_n\subset M_{n+1}$, so any unitary $U$ that preserves the span $\langle \cX\rangle$ under conjugation will also preserve that matrix algebra. But then it follows that $U$
  \begin{itemize}
  \item decomposes as a block-diagonal matrix with blocks of size $n$ and $1$,
  \item and hence its upper left-hand $n\times n$ block acts by conjugation on $M_n\subset M_{n+1}$ preserving $\langle \cX\rangle$ therein.
  \end{itemize}
  The hypothesis and the genericity of $\cX$ imply that the upper left-hand $n\times n$ block of $U$ is scalar. This is true of {\it every} $U$ projecting to $K_{\langle \cX\rangle}$, and hence $K_{\langle \cX\rangle}$ acts trivially on $\langle \cX\rangle$ (i.e. fixes every vector, not just the span). By \Cref{le:auxrep} the same holds of all tuples sufficiently close to $\cX$; since generically, $\cX\subset M_{n+1}$ generates the algebra $M_{n+1}$ (\Cref{lem:twogenerate}), it follows that $K_{\langle \cX\rangle}$ must indeed be trivial.
  
  As for the remaining case:

  {\bf ($d=n^2-2$)} Simply repeat the argument above appending one additional diagonal matrix to the tuple, with non-zero $(n+1)\times (n+1)$ entry.  
\end{proof}

Also note the following variant.

\begin{lemma}\label{le:indstp}
  Suppose that for some $n\ge 3$ and $1\le d$ there is a self-adjoint $d$-tuple $\cY$ of $(n-1)\times (n-1)$ matrices with zero diagonal such that no non-scalar diagonal unitary preserves their span. Then, $\cat{gen} K_{\langle \cX\rangle}(d+1,n)$ is trivial.
\end{lemma}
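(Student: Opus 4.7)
The plan is to exhibit a single self-adjoint traceless linearly independent $(d+1)$-tuple $\cX \subset M_n$ with trivial isotropy in $PSU_n$; by \Cref{cor:genfix}\Cref{item:7} (applied to the hereditary property of being trivial), this immediately upgrades to the genericity statement $\cat{gen}\, K_{\langle\cX\rangle}(d+1,n) = \{e\}$.

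For the construction, let $\iota\colon M_{n-1}\hookrightarrow M_n$ denote the upper-left corner embedding, and set $\tilde Y_i := \iota(Y_i)$. Since each $Y_i$ has zero diagonal, each $\tilde Y_i$ is self-adjoint, traceless, and has zero last row and column. For the remaining matrix I would take a traceless self-adjoint
\[
Z \;:=\; \op{diag}(\mu_1,\ldots,\mu_n) \;+\; \sum_{k=1}^{n-1}\gamma_k\,(e_{kn}+e_{nk}),
\]
with the $\mu_i$ pairwise distinct summing to $0$ and nonzero real $\gamma_k$, all chosen generically (the $\mu_i$ depending on the fixed $\cY$). Put $\cX := (\tilde Y_1,\ldots,\tilde Y_d,Z)$.

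Let $U\in U_n$ satisfy $U\langle\cX\rangle U^* = \langle\cX\rangle$. Writing $UZU^* = \alpha Z + \sum_j\beta_j\tilde Y_j$ and $U\tilde Y_iU^* = \alpha_i Z + \sum_j\beta_{ij}\tilde Y_j$, the argument runs in four steps. \emph{Step 1:} Using that the $\tilde Y_j$ have zero last row and column, the last row and column of $UZU^*$ must coincide with those of $\alpha Z$; a case analysis of the last column $Ue_n$, leveraging the generic cross term $\sum\gamma_k(e_{kn}+e_{nk})$ (which breaks the residual $\op{diag}(1,\ldots,1,-1)$-type symmetries that a purely diagonal $Z$ would leave behind), forces $U = \op{diag}(U',\zeta)$ with $U'\in U_{n-1}$ and $|\zeta|=1$. \emph{Step 2:} Under block-diagonality, $U\tilde Y_iU^* = \iota(U'Y_iU'^*)$ has zero last row and column; inspecting the $(n,n)$ entry (using $\mu_n\ne 0$) forces $\alpha_i = 0$, whence $U'Y_iU'^* \in \langle\cY\rangle$ for every $i$. \emph{Step 3:} In the block-diagonal case one computes $\alpha = 1$; the upper-right $(n-1)\times 1$ block of $UZU^* = Z + \sum_j\beta_j\tilde Y_j$ gives $U'\vec\gamma = \zeta\vec\gamma$, while the upper-left $(n-1)\times(n-1)$ block gives $U'D'U'^* - D' \in \langle\cY\rangle$, where $D' := \op{diag}(\mu_1,\ldots,\mu_{n-1})$. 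Together with the preservation of $\langle\cY\rangle$ from Step 2, these constraints, for generic $\mu$, force $U'$ to be diagonal. \emph{Step 4:} The hypothesis on $\cY$ then yields $U' = e^{i\phi}I_{n-1}$, and $U'\vec\gamma = \zeta\vec\gamma$ pins $\zeta = e^{i\phi}$, so $U = e^{i\phi}I_n$ is scalar, hence trivial in $PSU_n$.

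The main obstacle is Step 3: translating genericity of the $\mu_k$ into the statement that no non-diagonal $U' \in K_{\langle\cY\rangle} \subset U_{n-1}$ also satisfies $U'D'U'^* - D' \in \langle\cY\rangle$. Heuristically, the conjugation orbit of $D'$ under $U_{n-1}$ is an $(n-1)(n-2)$-dimensional manifold whose intersection with the $d$-dimensional affine subspace $D' + \langle\cY\rangle$ should—for $\mu$ in a Zariski-open set—be concentrated along the diagonal torus, forcing $U'$ to commute with $D'$ and hence be diagonal. Turning this into a precise genericity statement, uniform over the compact non-diagonal locus of $K_{\langle\cY\rangle}$, is the technical heart of the proof; it is here that the interaction between the given hypothesis on $\cY$ (which controls only the diagonal torus of $K_{\langle\cY\rangle}$) and the freely chosen diagonal part of $Z$ must be carefully negotiated.
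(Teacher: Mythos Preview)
The gap you identify at Step~3 is real but admits a one-line fix you overlooked: since $\langle\cY\rangle$ consists of off-diagonal matrices and $D'$ is diagonal, they are Hilbert--Schmidt orthogonal. Writing $U'D'U'^* = D' + A$ with $A \in \langle\cY\rangle$, conjugation-invariance of the HS norm gives $\|D'\|_{HS}^2 = \|D'+A\|_{HS}^2 = \|D'\|_{HS}^2 + \|A\|_{HS}^2$, whence $A=0$ and $U'$ commutes with $D'$, forcing $U'$ diagonal by simple spectrum. No dimension count or transversality argument is needed.

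The paper's proof is organized around exactly this HS-norm observation but is otherwise simpler than your plan. It takes the extra matrix $X$ to be purely \emph{diagonal} (no $\gamma_k$ cross terms). Then the non-unital algebra generated by $\cX$ is $M_{n-1}\times\bC \subset M_n$, so any $U\in K_{\langle\cX\rangle}$ is automatically block-diagonal as $(U',\zeta)$---this replaces your Step~1 entirely and avoids the case analysis. The HS-norm trick above (applied in the paper to the upper-left block $X'$ of $X$) forces $U'$ diagonal, and the hypothesis on $\cY$ makes $U'$ scalar. The paper does \emph{not} attempt to pin down $\zeta$: it instead observes that $K_{\langle\cX\rangle}$ now acts trivially on every element of $\langle\cX\rangle$ (not merely on the span), and then perturbs, as in the proof of \Cref{pr:indstp}, to a generic nearby tuple that generates all of $M_n$, where acting trivially on the span forces scalarity. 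Your $\gamma_k$ terms were introduced precisely to pin down $\zeta$ for the specific tuple, but this is the more laborious route: it complicates Step~1 (the algebra generated is now all of $M_n$, so you lose the free block-diagonality and must argue it by hand) without buying anything the perturbation argument does not already give.
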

\begin{proof}
  As in the proof of \Cref{pr:indstp}, regard the matrices in $\cY$ as elements of $M_n$ via the upper-left-corner embedding $M_{n-1}\subset M_n$. Then, expand $\cY$ to $\cX:=\cY\cup \{X\}$ by adding a generic diagonal self-adjoint traceless operator $X\in M_n$. 

  The algebra generated by $\cX$ will be $M_{n-1}\times \bC\subset M_n$, so the isotropy group $G=K_{\langle \cX\rangle}$ preserving $\langle \cX\rangle$ embeds in $U_{n-1}\times \bS^1\subset U_{n}$ and hence leaves invariant the decomposition $\bC^n\cong \bC^{n-1}\oplus \bC$ obtained by separating the last coordinate. $G$ thus acts on the span $\cX'\subset M_{n-1}$ of the top left $(n-1)\times(n-1)$ blocks of the matrices in $\cX$.

  The trivial-diagonal assumption ensures that the upper left block $X'$ of $X$ is the unique operator in $\langle\cX'\rangle$
  \begin{itemize}
  \item in its conjugacy class, and    
  \item having trace $\mathrm{tr}~X'$,
  \end{itemize}
  since any such operator is of the form $X'+A$ for off-diagonal $A$ and hence being conjugate to $X'$ entails $A=0$ by a Hilbert-Schmidt-norm comparison.

  It follows that $G$ fixes $X'$ and hence consists of diagonal operators. But then the additional assumption on $\cY$ shows that $G$ is a scalar on $\bC^{n-1}$. All in all, this means that $G$ acts trivially on $\cX$; we can now conclude as in the proof of \Cref{pr:indstp}, via \Cref{le:auxrep}.
\end{proof}


We will use \Cref{le:indstp} in conjunction with the following simple observation.

\begin{lemma}\label{le:nodiag}
  Let $p\ge 2$ be a positive integer. A generic $m$-dimensional real subspace $W\le \bC^p$ with $1\le m\le 2p-1$ is not invariant under any non-trivial diagonal unitary $\pm1 \ne U\in U_p$.
\end{lemma}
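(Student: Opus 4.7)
The plan is to reduce to the case $m\le p$ via a real-orthogonality duality and then to analyze the equation $UW=W$ row by row in a matrix representation. Every $U\in U_p$ is real-orthogonal on $\bC^p\cong \bR^{2p}$ (with real inner product $\operatorname{Re}\langle\cdot|\cdot\rangle$), so $UW=W$ iff $UW^{\perp}=W^{\perp}$, where $W^{\perp}$ has real dimension $2p-m$. Since the ranges $m\le p$ and $m\ge p$ together cover $1\le m\le 2p-1$, I may assume $m\le p$. Fix an $\bR$-basis $v_1,\ldots,v_m$ of $W$ and set $V:=(v_1\mid\cdots\mid v_m)\in M_{p\times m}(\bC)$. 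For $U=\operatorname{diag}(\lambda_1,\ldots,\lambda_p)$, invariance $UW=W$ is equivalent to the existence of $M\in M_m(\bR)$ with $UV=VM$; row by row this reads $\lambda_j V^{(j)}=V^{(j)}M$ for every $j$, i.e., each row $V^{(j)}\in\bC^m$ is a left $\lambda_j$-eigenvector of the real matrix $M$.

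\emph{Case $m<p$.} For generic $V$ each row $V^{(j)}$ is a Zariski-generic vector in $\bC^m$ and hence lies in no proper complex linear subspace. For each row to lie in some left eigenspace of $M$, that eigenspace must be all of $\bC^m$, forcing $M=\mu I$ and $\lambda_j=\mu$ for every $j$. Reality of $M$ together with $|\mu|=1$ then give $\mu\in\{\pm 1\}$, whence $U=\pm I$.

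\emph{Case $m=p$.} Now $V$ is generically invertible, so $M=V^{-1}DV$ with $D=\operatorname{diag}(\lambda_j)$; reality of $M$ translates to $UQ=Q\bar U$, where $Q:=V\bar V^{-1}$ (and $Q\bar Q=I$). Componentwise this reads $Q_{ij}(\lambda_i-\bar\lambda_j)=0$ for every pair $(i,j)$. For generic $V$ every entry of $Q$ is non-zero (each entry being a non-identically-vanishing rational function in the entries of $V$, hence vanishing only on a proper Zariski-closed locus), so every equation $\lambda_i=\bar\lambda_j$ must hold; taking $i=j$ gives $\lambda_i\in\bR\cap S^1=\{\pm 1\}$, and comparing different $(i,j)$ then forces all $\lambda_j$ equal, producing $U=\pm I$.

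The main delicate point is the case $m=p$: the system $Q_{ij}(\lambda_i-\bar\lambda_j)=0$ depends on which entries of $Q=V\bar V^{-1}$ vanish, so one must confirm that \emph{no} entry of $Q$ vanishes generically. This is a direct cofactor computation, each entry of $Q$ being a polynomial in the entries of $V$ divided by $\overline{\det V}$ and not identically zero.
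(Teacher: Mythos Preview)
Your reduction to $m\le p$ via the real-orthogonal duality is fine, and your $m=p$ argument (reducing to the entrywise system $Q_{ij}(\lambda_i-\bar\lambda_j)=0$ and checking that no $Q_{ij}$ vanishes identically) is correct and rather elegant. The paper proceeds quite differently: it first handles non-involutive $U$ by exhibiting, for each coordinate $i$, subspaces meeting $\bC e_i$ in a real line (hence not invariant if $\lambda_i\neq\pm1$), and then disposes of the finitely many diagonal involutions by a direct dimension count in the Grassmannian. Your linear-algebraic route is a genuine alternative.

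However, your $m<p$ case has a real gap. The sentence ``each row $V^{(j)}$ is a Zariski-generic vector in $\bC^m$ and hence lies in no proper complex linear subspace'' is false for $m\ge 2$: every nonzero vector lies in many hyperplanes. The eigenspace you want $V^{(j)}$ to avoid depends on $M$, which in turn depends on $V$, so you cannot simply say a generic row avoids it. What does work is a pigeonhole argument using the \emph{joint} genericity of the rows: generically any $m$ of the $p$ rows are $\bC$-linearly independent. If $M$ is non-scalar, its left eigenspaces $E_1,\dots,E_r$ are all proper with $\sum\dim E_i\le m$; since the $p>m$ rows are distributed among the $E_i$, some $E_i$ of dimension $d_i<m$ contains at least $d_i+1$ rows, which are then $\bC$-dependent---contradicting the generic independence of any $d_i+1\le m$ rows. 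With this repair your argument goes through.
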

\begin{proof}
  Denote by $e_i$, $1\le i\le p$ the standard basis of $\bC^p$ and identify
  \begin{equation*}
    V_i:=\bC e_i\cong \bR^2
  \end{equation*}
  (and thus $\bC^p\cong \bR^{2p}$). For each $1\le i\le p$ we can find $m$-dimensional subspaces of $\bR^{2p}$ that intersect $V_i$ along a line. Such spaces cannot be invariant under diagonal unitaries whose $i^{th}$ eigenvalue is $\ne \pm 1$. This non-invariance is generic for each $i$, and hence generic $W$ will not be invariant under any diagonal non-involutive unitaries.

  This leaves the diagonal involutions to deal with, i.e. the diagonal unitaries with eigenvalues $\pm 1$. If such a unitary $U$ is non-scalar then it splits $\bR^{2p}$ into a direct sum of non-zero $\pm 1$ eigenspaces $V_{+}$ and $V_{-}$, and $W$ is $U$-invariant if and only if
  \begin{equation*}
    W = (W\cap V_+)\oplus (W\cap V_-).
  \end{equation*}
  That this is generically not the case can be verified by a simple dimension count: there are only finitely many choices for $V_{\pm}$ (aligned with the fixed coordinate system we are considering), and one can check that for each specific choice of non-zero $V_{\pm}$ and each choice of
  \begin{equation*}
    \dim W\cap V_+,\quad \dim W\cap V_-
  \end{equation*}
  (of which there are finitely many) the dimension of
  \begin{equation*}
    (\text{variety of possible }W\cap V_+)\times (\text{variety of possible }W\cap V_-)
  \end{equation*}
  is smaller than that of the real Grassmannian $Gr(m,2p)$ parametrizing our $W$.
\end{proof}

The same type of dimension-counting argument used in the proof of \Cref{le:nodiag} also proves the following variant, of use below.

\begin{lemma}\label{le:pln}
  The conclusion of \Cref{le:nodiag} holds for real planes $W\le \bC^p$, $\dim_{\bR}W=2$ and unitaries $U$ ranging over the semidirect product $D_p\rtimes S_p$ of diagonal matrices by permutation matrices. 
\end{lemma}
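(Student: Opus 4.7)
The plan is to imitate the two-step argument of \Cref{le:nodiag}: for each fixed $P \in S_p$, show that the set of $W \in \mathrm{Gr}_{\bR}(2, 2p)$ invariant under some monomial unitary $U = DP \neq \pm I$ with $D \in D_p$ is a proper closed subset of the Grassmannian; then take the finite union over $P$ and pass to the open dense complement.

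For each fixed $P$, I would dimension-count the closed correspondence
\[
\mathcal{F}_P := \{(D, W) \in D_p \times \mathrm{Gr}_{\bR}(2, 2p) : DP \cdot W = W,\ DP \neq \pm I\}
\]
and its projection to $\mathrm{Gr}_{\bR}(2, 2p)$ (which is closed, since $D_p$ is compact), aiming to show $\dim \mathcal{F}_P < 4p - 4 = \dim_{\bR} \mathrm{Gr}_{\bR}(2, 2p)$ by stratifying $D_p$ according to the eigenvalue multiplicity pattern of $U = DP$.

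The relevant auxiliary observation --- obtained by a case distinction on whether $U|_W \in O(2)$ equals $\pm \mathrm{id}$, a non-trivial rotation, or a reflection --- is that every $U$-invariant real 2-plane either sits inside a single complex eigenspace $E_\mu$ (contributing $\mathbb{CP}^{m_\mu - 1}$ of dimension $2m_\mu - 2$ if $\mu \neq \pm 1$, or the full Grassmannian $\mathrm{Gr}_{\bR}(2, 2m_\mu)$ of dimension $4m_\mu - 4$ if $\mu = \pm 1$), or splits as a direct sum of a real line from $E_{+1}$ and one from $E_{-1}$ (contributing a family of dimension $2m_{+1} + 2m_{-1} - 2$). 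Meanwhile, ``$\mu \in S^1$ is an eigenvalue of $DP$ with multiplicity $\geq k$'' is a real codimension $\geq k$ condition on $D \in D_p$, since the eigenvalues on different $P$-cycles are controlled by the independent cycle products $\prod_{i \in C_j} \lambda_i$ and each coincidence with $\mu$ is one real equation on the unit circle. The constraint $U \neq \pm I$ caps $k \leq p - 1$ at $\mu = \pm 1$; the worst-case contribution is thus at most $(p - (p - 1)) + (4(p - 1) - 4) = 4p - 7 < 4p - 4$, with all other strata (including the scalar stratum $D = \alpha I$, relevant only for $P = I$) easily bounded below $4p - 4$.

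The main delicate point will be the codimension bookkeeping across the different cycle-structures of $P$, since the eigenvalue map $D \mapsto \mathrm{spec}(DP)$ is nonlinear; but only upper bounds on stratum dimensions are needed, and any degeneracies --- for instance when $P$ has few cycles and high multiplicity at $\mu$ becomes unachievable --- simply leave the offending strata empty and are harmless.
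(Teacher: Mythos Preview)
Your overall strategy---dimension-counting the incidence $\mathcal{F}_P$ and showing it falls below $\dim\mathrm{Gr}_{\bR}(2,2p)=4p-4$---matches the paper's. The paper organizes things slightly differently (it first uses \Cref{le:nodiag} to force $U^t=\pm I$ for $t$ the lcm of the cycle lengths of $P$, then works out only the single-$p$-cycle case and obtains the bound $p+1$), but the engine is the same.

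There is, however, a genuine gap in your case analysis. You assert that if $U|_W$ is a nontrivial rotation then $W$ sits inside a \emph{single} complex eigenspace $E_\mu$. This fails whenever both $\mu=e^{i\theta}$ and $\bar\mu=e^{-i\theta}$ are eigenvalues of $U$ on $\bC^p$: for example take $p=2$, $U=\mathrm{diag}(i,-i)$ and $W=\{(z,\bar z):z\in\bC\}$; then $U|_W$ is rotation by $\pi/2$, yet $W\not\subset E_i$ and $W\not\subset E_{-i}$. The correct statement is that such a $W$ corresponds to a complex line in the $e^{i\theta}$-eigenspace of $U$ acting on the complexification $(\bR^{2p})\otimes_\bR\bC$, and that eigenspace has complex dimension $m_\mu+m_{\bar\mu}$; hence the $W$-family has real dimension $2(m_\mu+m_{\bar\mu})-2$ rather than $2m_\mu-2$. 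With your codimension bookkeeping applied to the pair $\{\mu,\bar\mu\}$ (allowing one degree of freedom for $\mu$ itself), the corrected stratum contributes at most $2p-1<4p-4$, so your plan survives; but the ``auxiliary observation'' as written is false and must be repaired before the count goes through.
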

\begin{proof}
  \Cref{le:nodiag} shows that a unitary $U\in D_p\rtimes S_p$ leaving a generic $W$ invariant must have finite order, bounded by $2p!$: indeed, if $U=DP$ is a decomposition as a product of a diagonal and a permutation matrix, then $U^t=\pm 1$ if $t$ is the least common multiple of the cycle lengths of $P$. Furthermore, by \Cref{le:nodiag} again, it is enough to consider factorizations $U=DP$ for non-trivial $P$ (otherwise we are back in the diagonal case).
  
  Decomposing $U$ as a product $DP$ for diagonal $D$ and a permutation matrix $P$ we will, for the sake of simplicity, assume that $P$ is a single cycle of order $p\ge 2$. In general it will decompose as a product of disjoint cycles, but this will suffice to illustrate the dimension count. Moreover, taking a shorter cycle would restrict the number of degrees of freedom even further, so the case of a cycle of order $p$ is the most demanding to rule out.

  We have $U^{p}=\pm 1$ (as noted above), so the $p$ eigenvalues of $D$ provide $p-1$ (real, as opposed to complex) degrees of freedom (because their product is $\pm 1$). $\bC^p\cong \bR^{2p}$ decomposes as a direct sum of $p$ 2-dimensional $U$-eigenspaces, and for $W$ invariance under $U$ means:
  \begin{itemize}
  \item coinciding with one of these eigenspaces, which makes for a 0-dimensional variety of such $W$ for each choice of $D$-eigenvalues, or
  \item decomposing as the direct sum of two lines, one in each of two distinct $U$-eigenspaces; this makes for a 2-dimensional variety, since each line is selected from a 2-plane and hence ranges over a real projective line. 
  \end{itemize}
  All in all, counting the $p-1$ degrees of freedom in selecting the eigenvalues of $D$, the two cases give spaces of dimension $p-1$ and $p+1$ respectively. In summary, the $W$ invariant under some $U=DP$ range over a $(p+1)$-dimensional space. On the other hand the Grassmannian parametrizing the planes $W\le \bR^{2p}$ has dimension $2(2p-2)>p+1$ (since $p\ge 2$), finishing the proof.
\end{proof}


\begin{corollary}\label{cor:n1n2}
  For $3\le n$ and $2\le d\le (n-1)(n-2)$ the group $\cat{gen} K_{\langle \cX\rangle}(d,n)$ is trivial.  
\end{corollary}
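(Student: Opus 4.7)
The plan is to apply \Cref{le:indstp}: for each target $(d,n)$ in the range I would construct a self-adjoint $(d-1)$-tuple $\cY$ of $(n-1)\times(n-1)$ matrices with zero diagonal such that no non-scalar diagonal unitary in $U_{n-1}$ preserves $\langle\cY\rangle$, and the lemma will then deliver the conclusion.

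To produce $\cY$, I would identify the real vector space of self-adjoint $(n-1)\times(n-1)$ matrices with vanishing diagonal---of real dimension $(n-1)(n-2)=2p$ with $p=\binom{n-1}{2}$---with $\bC^p$ by pairing the conjugate entries $(i,j)$ and $(j,i)$ for $i<j$ into a single complex coordinate. Conjugation by a diagonal unitary $D=\mathrm{diag}(d_1,\ldots,d_{n-1})\in U_{n-1}$ then acts on $\bC^p$ as the diagonal unitary with weights $\{d_i\bar d_j\}_{i<j}$. By \Cref{le:nodiag} applied with $m=d-1$, which fits the admissible range $1\le m\le 2p-1=(n-1)(n-2)-1$ precisely when $2\le d\le(n-1)(n-2)$, a generic real $(d-1)$-dimensional subspace $\langle\cY\rangle\subset\bC^p$ is invariant under no diagonal unitary in $U_p$ apart from $\pm I_p$. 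A non-scalar $D\in U_{n-1}$ maps to $I_p$ only if it is itself scalar, and to $-I_p$ only if $d_i\bar d_j=-1$ for every $i<j$; for $n\ge 4$ this second eventuality is excluded by the triangle identity $(d_i\bar d_j)(d_j\bar d_k)(d_k\bar d_i)=1$, which cannot have all three factors equal to $-1$. Hence the hypothesis of \Cref{le:indstp} is met for $n\ge 4$ and the whole range of $d$.

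The residual case $(n,d)=(3,2)$ escapes this template: when $n-1=2$ there is no triangle available and the non-scalar $\mathrm{diag}(1,-1)\in U_2$ acts as $-I$ on off-diagonal $2\times 2$ matrices, so it preserves every candidate $\langle\cY\rangle$. I would handle this edge case by direct construction: by \Cref{cor:genfix} it is enough to exhibit a single self-adjoint, linearly-independent, traceless pair $(X_1,X_2)\subset M_3$ with trivial $PSU_3$-stabilizer of $\langle X_1,X_2\rangle$. With $X_1=\mathrm{diag}(1,-1,0)$ and a sufficiently generic $X_2$, the spectrum condition on $UX_1U^*=\alpha X_1+\beta X_2$ restricts $(\alpha,\beta)$ to a finite set; the subcase $UX_1U^*=X_1$ forces $U$ diagonal (since $X_1$ has simple spectrum) and then an off-diagonal match in $UX_2U^*$ forces $U$ scalar, while the subcase $UX_1U^*=-X_1$ forces $U$ into a $(12)$-permutation shape that is incompatible with suitably chosen asymmetries of $X_2$.

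The main obstacle is precisely this base case: the clean inductive mechanism via \Cref{le:indstp,le:nodiag} genuinely breaks at $n=3$, and one must supplement it with a small ad hoc verification.
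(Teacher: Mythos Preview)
Your main argument via \Cref{le:indstp} and \Cref{le:nodiag} is exactly the paper's, but you are more scrupulous on one point: the paper does not address the exception $\pm I_p$ built into the conclusion of \Cref{le:nodiag}, and your triangle identity $(d_i\bar d_j)(d_j\bar d_k)(d_k\bar d_i)=1$ correctly rules out non-scalar $D\in U_{n-1}$ mapping to $-I_p$ when $n\ge 4$. You are also right that $(n,d)=(3,2)$ escapes the mechanism entirely: there $p=\binom{2}{2}=1<2$, so \Cref{le:nodiag} does not even apply, and in any event $\mathrm{diag}(1,-1)\in U_2$ acts as $-1$ on every off-diagonal self-adjoint $2\times 2$ matrix, so the hypothesis of \Cref{le:indstp} is never satisfied. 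The paper's proof simply glosses over this.

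Your patch for $(3,2)$, however, is not complete as written. The spectral constraint that $\alpha X_1+\beta X_2$ be conjugate to $X_1=\mathrm{diag}(1,-1,0)$ amounts to the pair of equations $\mathrm{tr}((\alpha X_1+\beta X_2)^2)=2$ and $\det(\alpha X_1+\beta X_2)=0$; since $\det X_1=0$ the cubic is divisible by $\beta$, and beyond $(\pm 1,0)$ there can be further real solutions coming from the residual quadratic factor. You handle only the two subcases $UX_1U^{\ast}=\pm X_1$ and give no argument disposing of the others. A cleaner route for this single case: first observe (as in \Cref{pr:vinv}) that any $U\in K_{\langle\cX\rangle}$ commutes with the degree matrix, which for generic $(X_1,X_2)$ has simple spectrum, forcing $U$ into a fixed maximal torus of $PSU_3$; then argue directly that a generic real $2$-plane in the $8$-dimensional space of traceless self-adjoint $3\times 3$ matrices is preserved by no non-scalar element of that torus.
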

\begin{proof}
  This follows from \Cref{le:indstp} (with $d$ in place of that result's $d+1$), noting that by \Cref{le:nodiag} applied to
  \begin{align*}
    \bC^p &\cong \text{strictly upper-triangular $(n-1)\times (n-1)$ matrices}\\
          &\cong \text{off-diagonal self-adjoint $(n-1)\times (n-1)$ matrices}
  \end{align*}
  we can find
  \begin{equation*}
    1\le d-1\le 2p-1 = (n-1)(n-2)-1
  \end{equation*}
  self-adjoint off-diagonal $(n-1)\times (n-1)$ matrices whose span is not preserved by non-scalar unitaries. By \Cref{le:indstp}, this gives us the desired range $2\le d\le (n-1)(n-2)$.
\end{proof}

\begin{convention}\label{cv:abuse}
  Throughout the discussion below, we will abuse language slightly by referring to elements of $K_{\langle \cX\rangle}$ as matrices (rather than matrices modulo scalars).
\end{convention}

\begin{theorem}\label{th:new5}
  For $3\le n$ and $2\le d\le n^2-3$ the group $\cat{gen} K_{\langle \cX\rangle}(d,n)$ is trivial.
\end{theorem}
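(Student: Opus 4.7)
The plan is to combine the three tools assembled above---\Cref{cor:n1n2}, \Cref{lem:ortho}, and \Cref{pr:indstp}---and then to handle the residual small-$n$ cases by explicit construction.

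First, \Cref{cor:n1n2} covers $2 \le d \le (n-1)(n-2)$. Applying \Cref{lem:ortho} (which identifies the isotropy of $\langle\cX\rangle$ with that of its orthogonal complement in the $(n^2-1)$-dimensional space of traceless self-adjoints) extends this to also include the complementary range $3n-3 \le d \le n^2-3$. These two ranges together cover all of $[2,n^2-3]$ as soon as $(n-1)(n-2) \ge 3n-3$, equivalently $(n-1)(n-5) \ge 0$, i.e.\ $n \ge 5$. The case $n=4$ leaves the gap $\{7,8\}$, which I close by applying \Cref{pr:indstp} to the base case $(d,n)=(2,3)$ given by \Cref{cor:n1n2}: this yields triviality at $(d',4)$ for every $2 \le d' \le 7$, and combined with orthogonal complementation at $n=4$ covers $[2,13]$ in full.

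The genuinely delicate case is $n=3$, where \Cref{cor:n1n2} and \Cref{lem:ortho} together only cover $d \in \{2,6\}$, leaving $d \in \{3,4,5\}$; since $d=5$ is the orthogonal complement of $d=3$, only $d \in \{3,4\}$ require a new argument. By \Cref{cor:genfix}(\ref{item:7}), for each such $d$ it suffices to exhibit \emph{one} linearly independent, self-adjoint, traceless $d$-tuple $\cX \subset M_3$ whose span has trivial $PSU_3$-isotropy. My strategy is to rig $\cX$ so that simultaneously (i) the sum $\sum_i X_i^2$---equivalently, the degree matrix of the operator system $\bC\mathds{1}\oplus\langle\cX\rangle$ from \Cref{pr:vinv}---has simple spectrum, thereby forcing every stabilizing $U \in PSU_3$ to be diagonal in the corresponding eigenbasis, and (ii) the off-diagonal entries of the $X_i$ in that eigenbasis are arranged---using \Cref{le:nodiag}, or \Cref{le:pln} for the planar part---so that no non-scalar diagonal or monomial unitary preserves $\langle\cX\rangle$.

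The main obstacle is the $n=3$ step: one must simultaneously achieve simple spectrum of the degree matrix and rule out the residual normalizer-of-the-torus symmetries, and the symmetries obstructing the naive candidates (such as $X_1=\mathrm{diag}(2,1,-3)$ together with a pair of real off-diagonal matrices) are of precisely the form $\mathrm{diag}(\pm 1,\pm 1,\pm 1)$ that the real structure of $\langle\cX\rangle$ tends to preserve. Both of the required conditions cut out non-empty Zariski-open subsets of the parameter space, and exhibiting a tuple in their intersection---achieved by introducing complex (rather than real) off-diagonal combinations that break the $\pm 1$-diagonal stabilizers---completes the proof.
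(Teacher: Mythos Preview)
Your overall architecture matches the paper's: reduce the general case via \Cref{cor:n1n2} and \Cref{lem:ortho}, then treat small $n$ by hand. Your observation that for $n\ge 5$ the ranges $[2,(n-1)(n-2)]$ and $[3n-3,n^2-3]$ already overlap is correct and in fact streamlines the paper's argument, which runs an unnecessary induction via \Cref{pr:indstp} above $n=5$.

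There is, however, a misreading of \Cref{pr:indstp} in your $n=4$ step. That proposition, as proved, transfers the \emph{full} range $2\le d\le n^2-3$ at level $n$ to the range $2\le d'\le n^2-2$ at level $n+1$; its proof handles each target $d'$ by invoking the hypothesis at the \emph{same} value $d=d'$ (and at $d=n^2-3$ for the top case). So feeding in only $(d,n)=(2,3)$ yields only $(2,4)$, not all of $[2,7]$. This is harmless in the end, since once you have established all of $n=3$ you can apply \Cref{pr:indstp} correctly; but as written the $n=4$ paragraph is circular.

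The substantive gap is at $n=3$, $d\in\{3,4\}$. Your degree-matrix strategy is precisely the one the paper develops in \Cref{thm:trivialauto}, and there it is shown to work only for $n\ge 6$ and $d\ge 4$. The difficulty is concrete: if you force the degree matrix to be diagonal in the standard basis, the off-diagonal generators are constrained to lie in the $f_{ij}$-family (single off-diagonal position, possibly with both real and imaginary parts), and then every diagonal involution $\mathrm{diag}(\pm 1,\pm 1,\pm 1)$ preserves the span regardless of the complex phases you introduce---your proposed fix of ``complex off-diagonal combinations'' does not break these. If instead you let the eigenbasis of $D$ float, then your condition (ii) (``no non-scalar diagonal unitary in that basis preserves $\langle\cX\rangle$'') is, given (i), literally equivalent to ``trivial isotropy'', so asserting it is Zariski-open and non-empty is exactly the statement to be proved. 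The paper's \Cref{pr:34} avoids this circularity by a different mechanism: it includes the full diagonal subalgebra in $\langle\mathds{1},\cX\rangle$ (rather than relying on $D$), argues directly that $K_{\langle\cX\rangle}\subseteq D_n\rtimes S_n$, and then uses carefully chosen off-diagonal patterns together with \Cref{le:pln} and a parity argument on the $(-1)$-eigenspace dimensions to eliminate the residual involutions. That last elimination is the genuinely new work your sketch does not supply.
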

\begin{proof}
  We treat the case $n\ge 5$ here, delegating the small cases $n=3,4$ to \Cref{pr:34}. The argument is by induction on $n\ge 5$, via \Cref{pr:indstp}.

  In the base case $n=5$ \Cref{cor:n1n2} takes care of the range $2\le d\le (n-1)(n-2)=12$. Since this is precisely half of the range $2..(n^2-1=24)$, we conclude by \Cref{lem:ortho}, which allows us to bootstrap the left-hand half.

  As for the induction step of passing to $n+1$ from lower values, we obtain
  \begin{itemize}
  \item the range $2..n^2-2$ from \Cref{pr:indstp};
  \item the range $2..2n+1$ by applying \Cref{cor:n1n2} to $n+1$ (in place of $n$) and noting that
    \begin{equation*}
      n(n-1)=((n+1)-1)((n+1)-2)
    \end{equation*}
    dominates our desired upper bound $2n+1$ because $n\ge 5$ (in fact $n\ge 4$ would suffice here);
  \item and hence the range
    \begin{equation*}
      n^2-1\quad ..\quad (n+1)^2-3\quad =\quad ((n+1)^2-1)-(2n+1)\quad ..\quad ((n+1)^2-1)-2
    \end{equation*}
    by reflection, via \Cref{lem:ortho}.
  \end{itemize}
  This finishes the proof.
\end{proof}

\begin{corollary}\label{cor:genab}
  For $3\le n$ and $1\le d\le n^2-2$ the group $\cat{gen} K_{\langle \cX\rangle}(d,n)$ is abelian.  
\end{corollary}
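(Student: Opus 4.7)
The plan is to reduce the corollary to \Cref{th:new5} by treating separately the two boundary values $d=1$ and $d=n^2-2$, which are the only cases in the stated range $1\le d\le n^2-2$ not already covered by that theorem. For $2\le d\le n^2-3$, \Cref{th:new5} yields the stronger statement that $\cat{gen} K_{\langle \cX\rangle}(d,n)$ is trivial, hence trivially abelian; so only the two endpoint values require separate attention.

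First I would handle $d=1$ directly. A unitary $U\in U_n$ preserves $\langle X\rangle=\bC X$ under conjugation iff $UXU^*=\lambda X$ for some $\lambda\in\bC$. Since $UXU^*$ is self-adjoint with the same spectrum as $X$, for $X\ne 0$ such a $\lambda$ must be real and must satisfy $\operatorname{spec}(\lambda X)=\operatorname{spec}(X)$, i.e.\ $\lambda$ permutes $\operatorname{spec}(X)$ by scaling. If $\lambda>0$ the permutation is order-preserving and hence $\lambda=1$; if $\lambda<0$ the spectrum must be reversed, which (since reversing a strictly increasing spectrum by a single scalar forces a symmetry of the spectrum about $0$) is a codimension-$\ge 1$ condition on $X$. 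Thus for generic traceless self-adjoint $X$ the only option is $\lambda=1$, whereupon $U$ centralizes $X$. Since $X$ generically has simple spectrum, its centralizer in $U_n$ is the maximal torus of unitaries diagonal in an eigenbasis of $X$; its image in $PSU_n$ therefore lies in a maximal torus and is abelian.

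Next, the case $d=n^2-2$ reduces to $d=1$ via \Cref{lem:ortho}: the ambient space of traceless self-adjoint matrices is $(n^2-1)$-dimensional, so $\langle\cX\rangle^{\perp}$ is $1$-dimensional and, as $\cX$ ranges over a Zariski-open set of linearly independent self-adjoint traceless $(n^2-2)$-tuples, its perpendicular spans a generic line. Since \Cref{lem:ortho} identifies the isotropy group of $\langle\cX\rangle$ with that of $\langle\cX\rangle^{\perp}$, abelianness for generic $d=1$ tuples immediately transfers to generic $d=n^2-2$ tuples. I do not foresee significant obstacles: the deep content is entirely in \Cref{th:new5}, and the boundary cases amount to transparent linear-algebraic facts about generic traceless self-adjoint matrices, together with the orthogonal-complement symmetry already recorded in \Cref{lem:ortho}.
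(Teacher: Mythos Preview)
Your proposal is correct and follows essentially the same approach as the paper: reduce the interior range $2\le d\le n^2-3$ to \Cref{th:new5}, and for $d=1$ argue that a generic traceless self-adjoint $X$ has simple spectrum and is not conjugate to $-X$, so any $U$ preserving the line through $X$ must centralize $X$ and hence be diagonal in an $X$-eigenbasis. Your treatment is in fact slightly more explicit than the paper's, which handles only $d=1$ in the text and leaves the $d=n^2-2$ case to the reader; your invocation of \Cref{lem:ortho} to transfer the $d=1$ result to $d=n^2-2$ is exactly the intended (if unstated) mechanism.
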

\begin{proof}
  The case $2\le d\le n^2-3$ is covered by \Cref{th:new5}, whereas a {\it single} traceless matrix $X$ will have simple spectrum and be inequivalent to $-X$, ensuring that the subgroup of $U_n$ preserving the span $\bR X$ is diagonal in a basis diagonalizing $X$.
\end{proof}

\begin{proposition}\label{pr:34}
  The conclusion of \Cref{th:new5} holds for $n=3,4$. 
\end{proposition}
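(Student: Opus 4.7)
The plan is to reduce to a minimum set of cases via the existing machinery, then handle those explicitly. From \Cref{cor:n1n2} (covering $2\le d\le (n-1)(n-2)$) and \Cref{lem:ortho} (reflecting $d\mapsto n^2-1-d$), the only gaps within the target range $2\le d\le n^2-3$ are $(n,d)\in\{(3,3),(3,4)\}$ (the case $(3,5)$ being symmetric to $(3,3)$, and $(3,6)$ to $(3,2)$). Moreover, once any single $n=3$ case is established in the range $\{2,\ldots,6\}$, \Cref{pr:indstp} propagates triviality to $(n,d')=(4,d')$ for $d'\in\{2,\ldots,7\}$, and then \Cref{lem:ortho} finishes $n=4$. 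Thus the proposition reduces to the two cases $n=3$, $d\in\{3,4\}$.

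For each of these I would exhibit an explicit witness tuple $\cX\subset M_3$ of self-adjoint traceless linearly independent matrices whose span has trivial stabilizer in $PSU_3$, then invoke \Cref{cor:genfix}. A witness for $(n,d)=(3,3)$ is $X_1=\operatorname{diag}(2,-1,-1)$, $X_2=E_{12}+E_{21}+E_{13}+E_{31}$, $X_3=i(E_{12}-E_{21})+E_{23}+E_{32}$: the spectrum $\{2,-1,-1\}$ of $X_1$ is not realized by any other element of $\langle\cX\rangle$ (by a $\operatorname{rank}(Y+I)=1$ argument on candidates $Y=aX_1+bX_2+cX_3$), forcing any $U$ in the stabilizer to commute with $X_1$, hence to be of block form $U=\operatorname{diag}(u,V)$ with $V\in U_2$; then the conditions $UX_2U^*,UX_3U^*\in\langle\cX\rangle$ with \emph{real} expansion coefficients force $V(1,1)^T\in\mathbb{R}(1,1)^T$ (so $V$ is diagonal in the $\sigma_x$-eigenbasis with eigenvalues $p,q$), $u\bar q=1$, and $q\bar p\in\mathbb{R}\cap S^1$, collectively pinning $U$ down to a scalar. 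For $(n,d)=(3,4)$ the cleanest route is a "degree-matrix" argument: by \Cref{pr:vinv} the stabilizer commutes with $D=D(V)$, which has generically simple spectrum and therefore confines $K_V$ to the maximal torus $T^2\subset PSU_3$ of matrices diagonal in the $D$-basis; the generic $V_{od}:=V\cap\operatorname{off-diag}$ (in that basis) then has real dimension $d-(n-1)=2$, and \Cref{le:nodiag} with $p=\binom{3}{2}=3$ and $m=2$ (comfortably within the range $1\le m\le 2p-1=5$) rules out any non-scalar torus element preserving it.

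The main obstacle is the $(3,4)$ case: an explicit-witness approach is hampered by the fact that a generic $4$-dimensional span contains a positive-dimensional family of conjugates of $X_1$ (namely matrices $3P-I$ for rank-one projections $P$ satisfying certain linear constraints), so the uniqueness trick from $(3,3)$ fails. The degree-matrix argument circumvents this but introduces a subtlety of its own: since the $D$-basis depends on $V$, one must verify that $V_{od}$ really is "generic" in the sense required by \Cref{le:nodiag}. The delicacy is illustrated by the edge case $d=n^2-2$ just outside our range, where $V^\perp$ is one-dimensional and is forced into the diagonal subspace of the $D$-basis, so $V_{od}$ becomes the full off-diagonal space and a non-trivial torus stabilizer arises; avoiding this degeneracy for $d\le n^2-3$ reduces to checking that $V^\perp\cap\operatorname{diag}=0$ in the $D$-basis generically, which holds because $\dim V^\perp\ge 2$ and no linear combination of two generic traceless self-adjoint operators is simultaneously diagonalized by the $D$-basis. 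I anticipate resolving the genericity claim either by this $U_3$-equivariance argument or, more concretely, by verifying the degree-matrix argument at an explicit point and invoking \Cref{cor:genfix}.
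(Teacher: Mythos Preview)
Your reduction is a genuine improvement on the paper's bookkeeping: the paper treats $(3,3)$, $(3,4)$ and $(4,7)$ separately, whereas you correctly observe that once all of $n=3$ is done, \Cref{pr:indstp} (together with \Cref{lem:ortho}) disposes of $n=4$ entirely. One caveat: your phrasing ``once any single $n=3$ case is established'' misreads \Cref{pr:indstp}; that result transports triviality from $(n,d)$ to $(n+1,d)$ for the \emph{same} $d$ (with the extra $d'=n^2-2$ case coming from $d=n^2-3$), so you need the full $n=3$ range. This is harmless, since $(3,2)$, $(3,3)$, $(3,4)$ plus symmetry give it. Your $(3,3)$ witness is different from the paper's (which uses $n-1$ diagonal generators plus one generic off-diagonal matrix) but correct: the rank-one condition on $Y+I$ does force $(a,b,c)=(1,0,0)$, and the residual block analysis (after normalizing $u=1$) rules out $\operatorname{diag}(1,-1,-1)$.

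The real problem is $(3,4)$. Your degree-matrix route is exactly the one the paper develops in \Cref{thm:trivialauto}, and that theorem carries the explicit restriction $n\ge 6$ for a reason: at $n=3$ the constraint that the degree matrix be diagonal in a fixed basis forces the off-diagonal generators to be supported on single matrix-unit positions (any off-diagonal self-adjoint $3\times 3$ matrix $X$ with $X^2$ diagonal has at most one nonzero above-diagonal entry), and the real span of such matrices is \emph{always} preserved by the sign-involutions $\operatorname{diag}(\pm 1,\pm 1,\pm 1)$, so $K_{\langle\cX\rangle}$ retains a copy of $(\bZ/2)^2$. Relaxing to ``$\sum X_i^2$ diagonal'' does not help much: for two orthonormal off-diagonal generators this cuts out a low-dimensional subvariety of the Grassmannian, so you cannot reach a $V_{od}$ generic in the sense of \Cref{le:nodiag}. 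Your fallback of ``verifying at an explicit point'' is of course valid in principle, but no such point comes from the degree-matrix machinery; you would have to construct one by hand, at which point you are doing what the paper does. The paper's $(3,4)$ argument abandons the degree matrix entirely and instead builds $\cX$ from the full traceless diagonal algebra plus $2n-4$ off-diagonal matrices concentrated on the last row/column, using an intrinsic characterization (``elements whose square lies in $\langle 1,\cX\rangle$'') to pin down the diagonal part, and then \Cref{le:pln} to control the $D_n\rtimes S_{n-1}$ action on the remaining piece.
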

\begin{proof}
  Given the symmetry $d\leftrightarrow n^2-1-d$ provided by \Cref{lem:ortho}, \Cref{cor:n1n2} covers everything except for the cases
  \begin{equation*}
    (n,d) = (3,3),\ (3,4)\text{ and }(4,7),
  \end{equation*}
  which we treat here. 

  $(n,d)=(3,3):$ In general, for any $n\ge 3$, we can resolve the $d=n$ case by selecting a tuple $\cX$ consisting of $n-1$ linearly independent diagonal matrices $X_1$ up to $X_{n-1}$ together with an $n^{th}$ matrix $X_n$ that
  \begin{itemize}
  \item commutes with no non-scalar diagonal operators, ensuring that every $U\in K_{\langle \cX\rangle}$ preserves the diagonal algebra and hence belongs to the semidirect product $D_n\rtimes S_n$ of the diagonal unitaries and the permutation matrices;
  \item is not unitarily equivalent to $-X_n$, so that a unitary that preserves the span $\bR X_n$ must commute with $X_n$;
  \item is generic enough to ensure that no non-scalar unitary commutes with it, as in the proof of \Cref{cor:n1n2} (via \Cref{le:nodiag});
  \item has above-diagonal entries of distinct moduli, ensuring that no $DP\in D_n\rtimes S_n$ for a non-trivial permutation $P$ commutes with it.
  \end{itemize}
  Such a tuple $\cX$ would then have trivial $K_{\langle \cX\rangle}\subset PSU_n$.

  $(n,d)=(3,4)$ or $(4,7):$ We treat the cases in parallel to an extent. In both cases, we first consider tuples $\cX$ constructed as follows:
  \begin{itemize}
  \item $n-1$ traceless diagonal operators $X_i$, $1\le i\le n-1$ spanning (together with $1$) the diagonal algebra $D_n$;
  \item a linearly independent, self-adjoint traceless tuple $\mathcal{ND}$ of $2n-4$ operators $X_i$, $n\le i\le d$, all of whose non-zero entries are non-diagonal and on the last row/column.
  \end{itemize}
  If $\mathcal{ND}$ is chosen generically so as to ensure the span $\langle\mathcal{ND}\rangle$ contains no matrices with precisely {\it one} non-zero entry on the last column, the only elements in the span $\langle \cX\rangle$ whose squares belong to $\langle 1,\cX\rangle$ are diagonal. This intrinsic characterization of $D_n\subset \langle \cX\rangle$ implies that $K_{\langle \cX\rangle}$ preserves it, and hence once more
  \begin{equation*}
    K_{\langle \cX\rangle}\subseteq DU_n\rtimes S_n.
  \end{equation*}
  On the other hand $K_{\langle \cX\rangle}$ also preserves the orthogonal complement $\langle \mathcal{ND}\rangle$ of $D_n$ in $\langle 1,\cX\rangle$, so in fact it must be contained in the semidirect product $DU_n\rtimes S_{n-1}$, where the latter is the group of $n\times n$ permutation matrices permuting only the first $n-1$ coordinates.

  Next, note that the real subspace $W$ of $\bC^{n-1}$ spanned by the last columns of the matrices in $\mathcal{ND}$ (where the last columns are regarded as having $n-1$ entries upon dropping their last zero entry) has real dimension $2n-4$. This means that by \Cref{le:pln}, generically, that span is not preserved by any operator in $DU_{n-1}\rtimes S_{n-1}$ save for $-I_{n-1}$: for $n=3$ we can apply \Cref{le:pln} directly, whereas for $n=4$ we can apply it to the 2-dimensional orthogonal complement of the 4-dimensional $W$. This in turn means that the only unitary operators in $DU_n\rtimes S_{n-1}$ that preserve $\langle \mathcal{ND}\rangle$ are those of the form
  \begin{equation}\label{eq:one-}
    \mathrm{diag}(\lambda,\cdots,\lambda,\pm\lambda),\quad \lambda\in\bS^1. 
  \end{equation}
  In short, $K_{\langle \cX\rangle}$ must be contained in this latter diagonal group.

  Now select new tuples repeating the construction virtually verbatim, the only difference being that the $2n-4$ matrices (for $n=3,4$) in $\mathcal{ND}$ now have non-zero entries only immediately above/below the main diagonal (i.e. entries $(i,i\pm 1)$ for $1\le i\le n-1$).
 
  Essentially the same argument this time around shows that $K_{\langle \cX\rangle}$ will be contained in $D_n\rtimes S_n$ and act as $-1$ on the off-diagonal matrices $\mathcal{ND}$, But this means that (unless scalar) it consists of diagonal unitary matrices of the form
  \begin{equation}\label{eq:alt-}
    (\lambda,-\lambda,\cdots, -\lambda,\lambda)
  \end{equation}
  (alternating signs).

  This already settles the case $n=4$: if $\cat{gen} K_{\langle \cX\rangle}(7,4)$ were non-trivial then \Cref{eq:one-,eq:alt-}, which are obtainable by limiting the generic behavior, would mean that generically the non-trivial element of $K_{\langle \cX\rangle}$ has (up to scaling) both a 1 and a 2-dimensional $(-1)$-eigenspace: a contradiction.

  As for the remaining case $(n,d)=(3,4)$, we can take for $\cX$ a tuple consisting of {\it one} diagonal matrix $X_1$ together with 5 generically-chosen matrices as in previous $\mathcal{ND}$. $\langle \cX\rangle$ thus consists of matrices of the form
  \begin{equation*}
    \begin{pmatrix}
      x & 0 & a\\
      0 & y & b\\
      \overline{a} & \overline{b} & z
    \end{pmatrix}
  \end{equation*}
  with $x$, $y$ and $z$ unique up to simultaneous scaling and $(a,b)$ ranging over a 3-dimensional real subspace of $\bC^2$. If, say, $x$ and $y$ have equal signs and $(x,y,z)$ is otherwise generic, such a matrix will be invertible as soon as it has non-zero diagonal. This means that once more $K:=K_{\langle \cX\rangle}$ preserves the span $\bR X_1$ of the diagonal matrix $X_1=\mathrm{diag}(x,y,z)$ and hence also $\langle \mathcal{ND}\rangle$. The genericity of $X_1$ further ensures that $K$ consists of diagonal matrices.

  But according to the discussion above, if the generic $K$ is non-trivial then it consists (modulo scalars) of involutive matrices with 1-dimensional $(-1)$-eigenspace (cf. \Cref{eq:one-}), acting on $\langle \cX\rangle$ with 2-dimensional $\pm 1$-eigenspaces. It remains to observe that in the present setup no diagonal unitary with 1-dimensional $(-1)$-eigenspace can act with a 2-dimensional $(-1)$-eigenspace on the 3-dimensional real span consisting of 
  \begin{equation*}
    \begin{pmatrix}
      0 & 0 & a\\
      0 & 0 & b\\
      \overline{a} & \overline{b} & 0
    \end{pmatrix}
  \end{equation*}
  with $(a,b)$ as above. This contradicts the non-triviality of the generic $K$, finishing the proof.
\end{proof}

\subsection{The degree matrix}
In this subsection we take another route to the main results. We will employ properties of the degree matrix of a quantum graph. \Cref{thm:abelian} is the same as \Cref{cor:genab} and \Cref{thm:trivialauto} is a weaker version of \Cref{th:new5}. Even though we do not fully recover our results, in this approach we obtain somewhat explicit examples of tuples with trivial automorphism groups, where we only use a little randomness to obtain diagonal matrices with certain properties.
\begin{proposition}\label{Prop:gendegmat}
Let $V=\op{span}(\mathds{1}, X_1,\dots, X_d) \subset M_n$ be an operator system generated by $d$ traceless self-adjoint matrices $X_1,\dots X_d$. Let $A: M_n \to M_n$ be the corresponding quantum adjacency matrix, given by \Cref{Prop:quantumadj}. If the \textbf{degree matrix} $D:=A\mathds{1}$ has simple spectrum then the automorphism group of $V$ is abelian. Moreover, the complement of the set of tuples $(X_1,\dots, X_d)$ such that $D$ has simple spectrum is a closed subvariety, so if there exists such a $d$-tuple, almost surely all $d$-tuples have this property.
\end{proposition}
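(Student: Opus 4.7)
The plan is to combine \Cref{pr:vinv} with a spectral argument, and then to check that ``simple spectrum for $D$'' is a Zariski-open condition on the tuple $(X_1,\ldots,X_d)$.

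\emph{Abelianness.} By \Cref{pr:vinv}, every unitary $U$ implementing an automorphism of $V$ commutes with the degree matrix $D$. When $D$ has simple spectrum, its commutant $\{D\}' \subset M_n$ is the maximal abelian subalgebra of matrices diagonal in any eigenbasis of $D$. Any two automorphism-implementing unitaries therefore lie in a common MASA and commute, so their images in $PU_n$ commute, giving that the automorphism group of $V$ is abelian.

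\emph{Algebraicity of the locus.} I would make the dependence of $D$ on the tuple explicit. Orthonormalizing $(\mathds{1}, X_1,\ldots,X_d)$ with respect to the trace inner product and applying \Cref{Prop:quantumadj} yields
\[
D \;=\; \mathds{1} \;+\; n \sum_{i,j=1}^{d}\, (G^{-1})_{ij}\, X_i X_j,
\]
where $G = (\op{Tr}(X_i X_j))_{i,j=1}^{d}$ is the Gram matrix of the traceless part of the basis (the block structure of the full Gram matrix decouples the identity contribution from the $X_i$-contribution). Thus $D$ depends rationally on $(X_1,\ldots,X_d)$ on the Zariski-open locus where the $X_i$ are linearly independent. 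The condition ``$D$ has simple spectrum'' is the non-vanishing of the discriminant of the characteristic polynomial of $D$, which is a polynomial in the entries of $D$ and hence a rational function in the entries of the $X_i$. Multiplying through by $\det(G)^{N}$ for a suitable exponent $N$ exhibits the complement of the simple-spectrum locus (union with the linearly-dependent locus) as the zero set of a single polynomial on the space of $d$-tuples of traceless self-adjoint matrices; in particular it is Zariski-closed. If at least one tuple produces a $D$ with simple spectrum, this closed subvariety is proper and therefore has Lebesgue measure zero — in particular measure zero under the independent GUE law defining the $QG(n,d)$ model, yielding the ``almost surely'' conclusion.

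The only step requiring mild bookkeeping is the passage from the rational expression for $D$ to a polynomial condition on the tuple by clearing the Gram-determinant denominator; this is routine, and I do not foresee any substantive obstacle in the argument.
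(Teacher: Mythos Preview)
Your proof is correct and follows essentially the same approach as the paper's own argument: both parts---commutation with $D$ forcing abelianness when the spectrum is simple, and the algebraic nature of the failure locus---are handled the same way. Your treatment of the second part is in fact slightly more careful than the paper's: you make the rational dependence of $D$ on $(X_1,\ldots,X_d)$ explicit via the inverse Gram matrix, whereas the paper appeals loosely to Gram--Schmidt ``using only algebraic operations'' (which, taken literally, involves square roots; it is precisely your observation that $\sum_i A_i^2 = \sum_{j,k}(G^{-1})_{jk}X_jX_k$ that makes the rational dependence clean).
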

\begin{proof}
If $U$ is a unitary in $M_n$ such that $U V U^{\ast}=V$, then $U A(U^{\ast}xU) U^{\ast} = A(x)$, hence $U(A\mathds{1}) U^{\ast} = A(\mathds{1})$. So, as before, if $D=A\mathds{1}$ has simple spectrum then the set of such unitaries is commutative.

By \Cref{Prop:quantumadj} we have $D = n\sum_{i} A_{i}^2$, if $(A_i)$ is an orthonormal basis of $V$ consisting of Hermitian matrices. Such a basis can be obtained from the tuple $(X_1,\dots, X_d)$ via the Gram-Schmidt procedure, i.e. using only algebraic operations. Since having simple spectrum can also be described as vanishing of a certain polynomial, the set of ``bad'' tuples is algebraic, so it has measure zero as soon as it is proper.
\end{proof}
We will be working with operator systems for which the degree matrix $D$ is diagonal. We need the following lemma to show that one can construct operator systems with diagonal degree matrices that have no repeated eigenvalues.
\begin{lemma}\label{lemma:diag}
Let $1\leqslant d \leqslant n-2$. Let $\Lambda = (\lambda_1,\dots, \lambda_n)$ be an arbitrary diagonal matrix. Then there exists a $d$-tuple of traceless diagonal matrices $(X_1,\dots,X_d)$ such that $D+\Lambda$ has no repeated eigenvalues.
\end{lemma}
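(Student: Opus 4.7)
My plan is to reformulate the claim in terms of the subspace $V':=\op{span}(X_1,\dots,X_d)$ inside the $(n-1)$-dimensional space $H$ of traceless diagonal matrices, identified with $\{v\in\bR^n\,:\,\sum_k v_k=0\}$. First, I would invoke \Cref{Prop:quantumadj} together with the fact that traceless $X_i$'s are automatically orthogonal to $\mathds{1}$ under the trace pairing to deduce
\[
  D=\mathds{1}+n\sum_{i=1}^d A_i^2
\]
for any trace-orthonormal basis $(A_i)$ of $V'$. Since the $A_i$ are diagonal, $\sum_i A_i^2$ is diagonal with $k$-th entry $s_k:=(P_{V'})_{kk}$, where $P_{V'}\in M_n(\bR)$ is the Euclidean orthogonal projection onto $V'\subset\bR^n$. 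Thus $D+\Lambda$ has distinct diagonal entries if and only if the $n$ scalars $\lambda_k+ns_k$ are pairwise distinct; importantly, this condition depends only on $V'$ and not on the chosen basis.

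Next, I would perform a genericity argument on the irreducible affine parameter space $H^d$ of $d$-tuples. For each pair $k\ne l$ the locus
\[
  B_{k,l}:=\{(X_i)\in H^d\,:\,n(s_k-s_l)=\lambda_l-\lambda_k\}
\]
is Zariski-closed (on the locus of linearly independent tuples, $s_k=e_k^\top M(M^\top M)^{-1}M^\top e_k$ is rational in the $n\times d$ coordinate matrix $M$, and clearing denominators yields a polynomial equation extending to all of $H^d$). If each of the finitely many $B_{k,l}$ is \emph{proper}, then the complement of $\bigcup_{k<l}B_{k,l}$ is non-empty Zariski-open and any tuple therein does the job.

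To show $B_{k,l}$ is proper I would exhibit two tuples on which $s_k-s_l$ takes distinct values, thereby ruling out the function being identically equal to the constant $(\lambda_l-\lambda_k)/n$. Pick $j\notin\{k,l\}$ --- possible since $n\ge d+2\ge 3$ --- and let $V'_1$ be the direct sum of $\bR(e_k-e_j)$ and any $(d-1)$-dimensional subspace of the $(n-3)$-dimensional space of $H$-vectors supported on $\{1,\dots,n\}\setminus\{k,l\}$. The count $(d-1)+1=d\le(n-3)+1=n-2$ matches the hypothesis of the lemma exactly, making the construction feasible; this dimension matching is where $d\le n-2$ is used, and it is the only (mild) obstacle in the argument. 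By construction $V'_1\subseteq\{v_l=0\}$ so $s_l=0$, while $e_k-e_j\in V'_1$ together with monotonicity of projection diagonals forces $s_k\ge 1/2$; hence $s_k-s_l\ge 1/2$ on this tuple. Swapping the roles of $k$ and $l$ produces a second tuple with $s_k-s_l\le -1/2$. Since $[1/2,\infty)$ and $(-\infty,-1/2]$ are disjoint, at least one of the two tuples witnesses $s_k-s_l\ne (\lambda_l-\lambda_k)/n$, so $B_{k,l}$ is proper, completing the plan.
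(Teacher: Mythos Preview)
Your proof is correct and follows essentially the same strategy as the paper: for each pair $(k,l)$, argue that the locus where the $k$-th and $l$-th entries of $D+\Lambda$ coincide is a proper algebraic subvariety by exhibiting an explicit witness, then conclude by irreducibility. Your recasting of the diagonal entries as projection diagonals $s_k=(P_{V'})_{kk}$ together with the monotonicity bound $s_k\ge (P_{\bR(e_k-e_j)})_{kk}=1/2$ is a tidier packaging than the paper's more ad hoc route---which parametrizes via a Haar-random $U\in O(n)$ and then hand-builds a witness $u_1=c_1e_1-c_2e_{d+1}-c_3e_{d+2}$ with free parameters subject to $c_1=c_2+c_3$ and $c_1^2+c_2^2+c_3^2=1$---but the underlying idea is the same.
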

\begin{proof}
  Let $U\in O(n)$ be a Haar orthogonal matrix. Then for each $j$ we can consider the diagonal matrix $u_j:= \sum_{k} u_{jk} e_{k}$, where $e_k$ is a shorthand for $e_{kk}$. For different $j$'s these matrices are mutually orthogonal. Let $(\tilde{u}_{j})_{j\in [n]}$ be the centered version of $(u_j)_{j\in[n]}$, i.e. we subtract from each $u_j$ a suitable multiple of the identity to make it traceless and then apply the Gram-Schmidt procedure to obtain an orthonormal set. Then $V:= \op{span}(\mathds{1}, \tilde{u}_1,\dots,\tilde{u}_d)$ is an operator system whose degree matrix is equal to $D=\mathds{1} + n\sum_{j=1}^{d} \widetilde{u}_j^2$. For convenience, we will work with the matrix $\widetilde{D}:= \sum_{j=1}^{d} \widetilde{u}_{j}^2$.

We want to show that there exists a choice of $U$ such that $\widetilde{D}+\Lambda$ has no repeated eigenvalues, i.e. all its entries are distinct. In fact it holds almost surely. In order to prove that, we have to show for a given pair of diagonal entries that they are different almost surely; it will follow that almost surely all entries are different.

Let us do it for entries $1$ and $2$. Clearly equality of these two entries is an algebraic equation in the entries of $U$, so it will hold on measure zero set as soon as we can exhibit a single example. We will start with the following vectors $u_1 = c_1 e_{1} -c_2 e_{d+1} - c_3 e_{d+2}$, $\widetilde{u}_k =  e_{k} - \frac{1}{n} \sum_{l=1}^{n} e_l$ for $k\in \{2,\dots,d\}$. All the $\widetilde{u}_k$'s for $k\geqslant 2$ are orthogonal to $u_1$, so the same will be true for $(u_k)_{k=2}^{d}$ -- the orthonormal family obtained from $(\widetilde{u}_k)_{k=2}^{d}$ via the Gram-Schmidy procedure. We need the coefficients of $u_1$ to satisfy $c_1=c_2+c_3$ and $c_1^2 + c_2^2 + c_3^2 = 1$. Note that the contribution to the entries of $\widetilde{D}+\Lambda$ coming from the vectors $(u_k)_{k=2}^{d}$ is independent of our choice for $u_1$, so we can include it in the matrix $\Lambda$, by forming a new matrix $\widetilde{\Lambda}$. With this choice the first entry of $\widetilde{D} + \widetilde{\Lambda}$ is equal to $c_1^2 + \widetilde{\lambda}_1$ and the second one is equal to $ \widetilde{\lambda}_2$. We can easily choose an appropriate $c_1$ so that the two are not equal. The lemma follows.
\end{proof}

\begin{theorem}\label{thm:abelian}
  Fix $n$ and $d \in \{1,\dots, n^2 - 2\}$. For almost every $d$-tuple $(X_1,\dots, X_d)$ of traceless self-adjoint matrices the automorphism group of the quantum graph $V:= \op{span}\{\mathds{1}, X_1,\dots, X_d\}$ is abelian.
\end{theorem}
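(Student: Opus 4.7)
The plan is to leverage the preceding proposition, which asserts that simple spectrum of the degree matrix $D$ implies an abelian automorphism group, and that the set of tuples for which $D$ has a repeated eigenvalue is a (closed) algebraic subvariety of the parameter space. Hence, for each fixed $d$ in the stated range, the theorem reduces to exhibiting \emph{one} explicit $d$-tuple $(X_1,\dots,X_d)$ of traceless self-adjoint matrices whose operator system has $D$ of simple spectrum.

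For $1 \leqslant d \leqslant n-2$, I would invoke \Cref{lemma:diag} directly, with $\Lambda$ chosen to absorb the fixed contribution of $\mathds{1}$ to $D$ (a multiple of the identity). Since the tuple produced by the lemma is diagonal, $D$ is itself diagonal, and its eigenvalues are exactly its diagonal entries; the lemma makes these distinct, so $D$ has simple spectrum.

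For $n - 1 \leqslant d \leqslant n^2 - 2$, I would enlarge a diagonal base tuple with off-diagonal self-adjoint matrices of the form $E_{kl} := e_{kl} + e_{lk}$ and $F_{kl} := i(e_{kl} - e_{lk})$ for $1 \leqslant k < l \leqslant n$. The key convenient identity is
\begin{equation*}
    E_{kl}^2 \;=\; F_{kl}^2 \;=\; e_{kk} + e_{ll},
\end{equation*}
which is diagonal. Moreover, the $n^2 - n$ matrices in $\{E_{kl}, F_{kl} : k < l\}$ are pairwise orthogonal with respect to the trace inner product and orthogonal to the space of diagonal matrices, so adjoining any subset of them to a diagonal tuple keeps the full collection linearly independent and keeps $D$ diagonal. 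Explicitly, I would propose a tuple consisting of $n-2$ diagonal traceless matrices (to be selected) together with any $d - (n-2)$ of the $E_{kl}$'s and $F_{kl}$'s; the total $D$ is then of the form $D_{\text{diag}} + \Lambda_{\text{off}}$, where $D_{\text{diag}}$ comes from the squares of the normalized diagonal part and $\Lambda_{\text{off}}$ is the (diagonal) contribution of the chosen off-diagonal matrices. Applying \Cref{lemma:diag} with $\Lambda$ equal to the sum of $\Lambda_{\text{off}}$ and the identity's contribution then produces a diagonal part whose combination with the off-diagonals gives total $D$ with distinct diagonal entries. The counting is tight: $n-2$ diagonal plus at most $n^2 - n$ off-diagonal basis vectors gives at most $n^2 - 2$, matching the theorem's upper bound.

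The main potential obstacle is preserving the convenient diagonality of $D$ throughout the extension; this is exactly what motivates the specific choice of $E_{kl}, F_{kl}$ (rather than arbitrary off-diagonal self-adjoint matrices, whose squares would generically have off-diagonal parts and force us to track a more complex spectral perturbation). Once the construction is in place, the rest is formal: \Cref{lemma:diag} provides the necessary flexibility to ensure distinct diagonal entries, and the algebraicity of the ``bad set'' upgrades this single example to a statement about almost every $d$-tuple, completing the proof.
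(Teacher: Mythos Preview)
Your proposal is correct and matches the paper's proof essentially verbatim: the paper introduces the orthonormal family $f_{ij} = \frac{1}{\sqrt{2}}(e_{ij}+e_{ji})$ for $i<j$ and $f_{ij} = \frac{i}{\sqrt{2}}(e_{ij}-e_{ji})$ for $i>j$ (your $E_{kl}$, $F_{kl}$ up to normalization), uses the same observation that $f_{ij}^2$ is diagonal, and splits into the same two cases, invoking \Cref{lemma:diag} for the diagonal part exactly as you do.
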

Before proving this theorem, we need to introduce a certain orthonormal family of Hermitian matrices in $M_n$ that will allow us to build operator systems with diagonal degree matrices. For any pair $k\neq l$ of numbers in $\{1,\dots,n\}$ we define
\begin{equation*}
  f_{kl}:= \left\{ \begin{array}{cl} \frac{1}{\sqrt{2}} (e_{kl} + e_{lk}) &\text{if } k<l \\ \frac{i}{\sqrt{2}} (e_{kl} - e_{lk}) &\text{if }k>l \end{array}\right.
\end{equation*}
Note that the family $\mathcal{F}:=(f_{kl})$ is orthonormal and $f_{kl}^2 = \frac{1}{2} (e_{kk} + e_{ll})$ is diagonal.
\begin{proof}[of \Cref{thm:abelian}]
By \Cref{Prop:gendegmat}, for any $d\in \{1,\dots, n^2-2\}$ we just need to provide a single tuple such that the degree matrix has no repeated eigenvalues.

If $d\leqslant n-2$ then we can use the \Cref{lemma:diag} with $\Lambda=0$. If $d>n-2$, then we take a subset of the family $\mathcal{F}$ of cardinality $d-n+2$, which is always possible, because $\mathcal{F}$ has $n^2-n$ elements. We get a corresponding degree matrix $D_1$, which is diagonal. By \Cref{lemma:diag} we can choose an $n-2$-tuple of traceless diagonal matrices with the corresponding diagonal degree matrix $D_2$ such that $D_1+D_2$ has no repeated eigenvalues. But $D:=D_1+D_2$ is exactly the degree matrix of the combined operator system, which has therefore a degree matrix with simple spectrum. This ends the proof of the theorem.
\end{proof} 

\begin{theorem}\label{thm:trivialauto}
  Let $n\geqslant 6$ and $4\le d\le n^2-5$. We can construct a $d$-tuple $(X_1,\dots , X_d)$ of traceless Hermitian matrices, such that the degree matrix is diagonal and has simple spectrum and the automorphism group of the corresponding quantum graph is trivial.
\end{theorem}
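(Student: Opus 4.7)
The plan is to exploit the observation that a diagonal $D$ with simple spectrum forces any automorphism $U$ to be diagonal in the standard basis (since $U$ must commute with $D$), so the task reduces to arranging $V$ so that no non-scalar diagonal $U$ preserves it. Three types of matrices all contribute only diagonal terms to $D$: traceless diagonals (as in \Cref{lemma:diag}), individual $f_{ij}$ from the family $\mathcal{F}$, and ``matching sums'' $\Sigma_M := \frac{1}{\sqrt{|M|}}\sum_{\{i,j\}\in M} f_{ij}$ over matchings $M$ of $K_n$; their squares are diagonal because $f_{ij} f_{kl} = 0$ whenever the edges $\{i,j\}$ and $\{k,l\}$ are disjoint.

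For a ``direct'' construction I would fix several disjoint matchings $M_1, \ldots, M_r$ of $K_n$ whose edge union is connected on $\{1,\ldots,n\}$, and include the corresponding matching sums in $V$. Each $1$-dimensional off-diagonal direction in $V$ forces any preserving $U = \mathrm{diag}(e^{i\theta_k})$ to satisfy $\theta_i - \theta_j \in \pi\mathbb{Z}$ on its edge, so by connectedness $U$ reduces modulo scalars to a sign-diagonal involution $\mathrm{diag}(\epsilon_k)$, $\epsilon_k = \pm 1$. Preservation of each $\Sigma_{M_\ell}$ then translates into weight-$4$ linear relations $e_i + e_j + e_k + e_l = 0$ in $\mathbb{F}_2^n$, and one chooses the $M_\ell$ so these span the codimension-one hyperplane $\mathbf{1}^\perp$, forcing $\epsilon = \pm \mathbf{1}$ and trivializing the automorphism group modulo scalars. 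To hit arbitrary $d$ in the direct range, pad with (a) up to $n-2$ traceless diagonals, applying \Cref{lemma:diag} with $\Lambda$ equal to the already-committed diagonal contribution to ensure $D$ has simple spectrum, and (b) individual $f_{ij}$ or $f_{ji}$ supported on edges disjoint from $\bigcup M_\ell$; these additions neither weaken the matching cross-constraints (their projections onto each matching subspace vanish) nor create new diagonal symmetries (diagonal involutions trivially preserve an off-diagonal singleton, and the condition $\theta_i - \theta_j \in \pi\mathbb{Z}$ on these edges is already implied by connectedness).

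For $d$ in the upper part of $\{4,\ldots,n^2-5\}$, I would invoke \Cref{lem:ortho} to identify the automorphism group of $V = \mathbb{C}\mathds{1} + W$, where $W = \langle \cX\rangle$, with that of $\mathbb{C}\mathds{1} + W^\perp$. The elementary completeness identity $\sum_\alpha X_\alpha^2 = \frac{n^2-1}{n}\mathds{1}$ over any orthonormal basis of the traceless self-adjoint matrices $M_{n,H}^0$ (a short trace computation) yields $D_V + D_{V^\perp} = (n^2+1)\mathds{1}$, so both diagonality and simplicity of the degree spectrum pass to the orthogonal complement. A count of maximum direct $d$ (roughly $3 + (n-2) + 2|E \setminus \bigcup M_\ell|$, of order $n^2 - 3n + O(1)$) then shows that for $n \ge 6$ the direct range and its image under complementation together exhaust $\{4,\ldots,n^2-5\}$.

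The main obstacle is the combinatorial step: exhibiting matchings $M_1,\ldots,M_r$ whose induced $\mathbb{F}_2^n$-vectors span $\mathbf{1}^\perp$. For $n=6$ three matchings from a $1$-factorization of $K_6$ can be checked directly to suffice by an explicit rank computation; similar choices handle larger even $n$, whereas for odd $n$ a naive choice of near-perfect matchings can fall short of full rank and one may need extra matching sums (for instance of only two disjoint edges) to supply the missing weight-$4$ relations, with corresponding bookkeeping adjustments to keep $D$ diagonal with simple spectrum.
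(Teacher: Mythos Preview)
Your approach is essentially the same as the paper's: both use sums of $f_{ij}$'s over matchings (the paper's $X_1, X_2, Y$ are precisely your $\Sigma_{M_\ell}$ for three specific matchings), pad with traceless diagonals via \Cref{lemma:diag} and with individual $f_{ij}$'s on unused edges, and invoke \Cref{lem:ortho} together with the identity $D_V + D_{V^\perp} = (n^2+1)\mathds{1}$ for the upper half of the range. Your $\bF_2$ reformulation is a clean repackaging of the paper's explicit sign-chasing: the paper's relations ``$u_i u_{i+1} = u_{i+2} u_{i+3}$'' are exactly your weight-$4$ constraints.

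The paper resolves what you flag as the ``main obstacle'' by an explicit choice rather than an abstract rank argument: for $n \ge 7$ it takes $X_1 = \sum_i f_{2i,2i+1}$, $X_2 = \sum_i f_{2i-1,2i}$, and $Y = f_{14}+f_{25}+f_{37}$, then checks by hand that these three matching sums force every $u_i = 1$; a separate explicit triple handles $n=6$. In particular your concern that odd $n$ might require more than three matching sums---which, in your scheme, would push the minimal $d$ above $4$ and leave a gap at the bottom of the stated range---turns out to be unnecessary, but confirming this requires exactly the explicit verification that your proposal defers. So the plan is correct and coincides with the paper's, but as written it is not yet a proof: the combinatorial step must actually be carried out for at least one triple of matchings per $n\ge 6$.
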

\begin{proof}

We will first show that there exists an appropriate family of Hermitian matrices with zero diagonals and with a diagonal degree matrix such that there is no nontrivial diagonal matrix that preserves the span upon conjugation. Then we will invoke the \Cref{lemma:diag} to upgrade our degree matrix to a one with simple spectrum, which will show that diagonal matrices were the only possible candidates for automorphisms.

Assume first that $n\geqslant 7$. Define $X_{1} := \sum_{i=1}^{\left[(n-1)/2\right]} f_{2i,2i+1}$ and $X_2:= \sum_{i=1}^{\left[n/2\right]} f_{2i-1,2i}$. As $X_1$ and $X_2$ are orthogonal and $X_1^2$ and $X_2^2$ are diagonal, the corresponding degree matrix is diagonal. Let $U:=(u_1,\dots,u_n)$ be a diagonal unitary matrix. Since the automorphism group is really the quotient of the subgroup of unitaries by the center, i.e. the scalar matrices, we may assume that $u_1=1$. Recall that if $X=(x_{ij})$ is a matrix then $UXU^{\ast}$ has entries $(u_i u_j^{-1} x_{ij})$. We can now check when the $\op{span}(X_1,X_2)$ is preserved upon conjugation by $U$. One condition is that the entries $(i,i+1)$ and $(i+1,i)$ are equal which gives $u_{i} u_{i+1}^{-1} = u_{i+1} u_{i}^{-1}$, i.e. $u_{i}^2 = u_{i+1}^2$, i.e. the squares of entries are constant. As $u_1=1$, we conclude that $u_i =\pm 1$, so we will from now on write $u_i u_{i+1}$ instead of $u_{i} u_{i+1}^{-1}$. From the form of $X_1$ and $X_2$ we get that $u_{i} u_{i+1} = u_{i+2}u_{i+3}$. In particular, only $u_2$ and $u_3$ are free variables, because, for example, $u_4 = u_2 u_3$, since $u_1 u_2 = u_3 u_4$. From the equality $u_2 u_3 = u_4 u_5$ it follows that $u_5=1$. From $u_3u_4 = u_5u_6$, we get $u_6=u_3 u_4 = u_2$. Moreover, from $u_2 u_3=u_4 u_5 = u_6 u_7$ we get that $u_7=u_3$. We need to add another matrix to conclude that $u_2=u_3=1$, which would prove that $U=\mathds{1}$.

Our choice will be $Y = f_{14} + f_{25} + f_{37}$. We get $u_1u_4 = u_2 u_5 = u_3 u_7$. As $u_1=u_5=1$, we obtain $u_2=u_4$, so $u_3=1$, since $u_2 u_3 = u_4$. On the other hand, $u_3 u_7=1$, so $u_4=u_2=1$ and we conclude that all $u_i$'s are equal to one. So as soon as we have the span of $(X_1,X_2,Y)$ and nothing else happens at the entries used by these matrices we will not have nontrivial diagonal matrices preserving the subspaces upon conjugation. These matrices have in total $2n+4$ non-zero entries. We do not want to touch the diagonal, so we have $n^2 - 3n-4$ entries left, where we can insert other members of the family $\mathcal{F}= (f_{ij})$; whichever we choose, the resulting degree matrix will be diagonal. After having made this choice, we can invoke the \Cref{lemma:diag} to choose between $1$ and $n-2$ diagonal matrices that force the degree matrix of the whole quantum graph to have a simple spectrum.

We have to use at least $4$ matrices in this approach: $X_1$, $X_2$, $Y$, and one diagonal, which gives the lower bound $d\geqslant 4$. To get to $n^2-5$, note that it is sufficient to go up to $\frac{n^2-1}{2}$, by using \Cref{lem:ortho}. The only thing to note is that is that the degree matrix of the orthogonal complement will also be diagonal. This follows from the fact that the degree matrix of the sum of the two will be the sum of the degree matrices, and this sum is equal to the whole space of traceless matrices, whose degree matrix is a multiple of identity. Now note that in our construction we left $n^2-3n-4$ unused entries and we can add to it our $3$ matrices $X_1$, $X_2$, and $Y$, and up to $n-2$ diagonal matrices, so we we can have a $d$-tuple with $d=n^2-2n-3$. This is larger than $\frac{n^2-1}{2}$ already for $n\geqslant 3$.

To deal with the case $n=6$, we use slightly different matrices $X_1$, $X_2$, and $Y$. Namely, we take $X_1:=f_{12}+f_{34}+f_{56}$, $X_2:= f_{23} + f_{45} + f_{16}$, and $Y:= f_{15} + f_{36}$. By examining $X_1$ and $X_2$ we get the equalities $u_1 u_2 = u_3 u_4 = u_5 u_6$ and $u_2 u_3 = u_4 u_5 = u_1 u_6$. Remembering that $u_1=1$, we get $u_3=u_5=1$ and $u_2=u_4=u_6$. If we include $Y$, we get the additional condition $u_1 u_5 = u_3 u_6$, so $u_6=1$. In this case there are $16$ unused entries, so we can construct a $d$-tuple with $d= 23$, which is larger than $\frac{35}{2}$.
\end{proof}


\newcommand{\etalchar}[1]{$^{#1}$}
\def\polhk#1{\setbox0=\hbox{#1}{\ooalign{\hidewidth
  \lower1.5ex\hbox{`}\hidewidth\crcr\unhbox0}}}

\addcontentsline{toc}{section}{References}

\Addresses

\end{document}